\documentclass[11pt,reqno]{amsart}
\usepackage[margin=1.08in]{geometry}
\usepackage{amsmath,amssymb,amsthm,mathtools,mathrsfs}
\usepackage{microtype,aliascnt}
\usepackage[hidelinks]{hyperref}
\allowdisplaybreaks
\numberwithin{equation}{section}
\newtheorem{theorem}{Theorem}[section]
\newaliascnt{proposition}{theorem}
\newtheorem{proposition}[proposition]{Proposition}
\aliascntresetthe{proposition}
\newaliascnt{lemma}{theorem}
\newtheorem{lemma}[lemma]{Lemma}
\aliascntresetthe{lemma}
\usepackage[nameinlink,noabbrev]{cleveref}
\crefname{theorem}{Theorem}{Theorems}
\crefname{proposition}{Proposition}{Propositions}
\crefname{lemma}{Lemma}{Lemmas}
\newcommand{\T}{\mathbb T}
\newcommand{\R}{\mathbb R}
\newcommand{\Z}{\mathbb Z}
\newcommand{\Q}{\mathbb Q}
\newcommand{\1}{\mathbf 1}
\newcommand{\dist}{\operatorname{dist}}

\newcommand{\vol}{\operatorname{vol}}

\hypersetup{%
  pdftitle={Maximal volume of convex bodies avoiding random points and lacunary dilates},
  pdfauthor={Yuval Peres and Bohan Yang}
}

\title[Maximal volume avoiding lacunary dilates]{Maximal volume of convex bodies\\avoiding random points and lacunary dilates}
\author[Yuval Peres and Bohan Yang]{Yuval Peres$^{1}$ and Bohan Yang$^{2,3}$}
\makeatletter
\let\original@setauthors\@setauthors
\def\@setauthors{\original@setauthors
  \begin{center}\small\itshape
  $^{1}$ Beijing Institute of Mathematical Sciences and Applications (BIMSA),\\
  Beijing 101408, China\\[3pt]
  $^{2}$ Shanghai Institute for Mathematics and Interdisciplinary Sciences (SIMIS),\\
  Shanghai 200433, China\\[3pt]
  $^{3}$ Research Institute of Intelligent Complex Systems, Fudan University,\\
  Shanghai 200433, China
  \end{center}}
\makeatother

\subjclass[2020]{Primary 11K60; Secondary 11K38, 60D05, 52A40}
\keywords{Lacunary sequences, maximal gaps, empty convex bodies, Fourier decay, random points}
\date{}

\begin{document}

\begin{abstract}
For a positive real lacunary sequence and any probability measure $\mu$ with polynomial Fourier decay, we determine, for $\mu$-almost every $x$, the asymptotic maximal volume of convex bodies avoiding the first $N$ dilates of $x$ modulo $\Z^d$.  Among homothets of a fixed convex body, the maximal volume is asymptotic to $(\log N)/N$. When all convex bodies in the unit cube are allowed, it is asymptotic to $d(\log N)/N$. For independent uniform points in a fixed convex body $\Omega\subset\R^d$, the maximal volume of an empty convex body is asymptotic to $d\vol(\Omega)(\log N)/N$ almost surely.
We also show, by constructing a one-dimensional counterexample, that lacunarity cannot be replaced by sparsity on sublacunary scales.
\end{abstract}
\maketitle

\section{Introduction}
Let $d\ge1$ and let $(a_n)$ be a sequence of positive real numbers.
For $x\in\R^d$, consider
$$
 A_N(x)=\{a_nx\bmod1:1\le n\le N\},
$$
where $y\bmod1$ denotes the coordinatewise fractional part of $y$, taking
values in $[0,1)^d$.
A convex body is called empty relative to a point set if its interior
contains no point of that set. We study the maximal volume of empty
convex bodies relative to $A_N(x)$, first among homothets of a fixed
convex body and then among all convex bodies in the unit cube.

\subsection{Main results}
We identify $[0,1)^d$ with $\T^d=\R^d/\Z^d$.  Write $\vol(K)$ for
Lebesgue volume and $\|v\|$ for the Euclidean norm; write
$\|v\|_\infty=\max_j|v_j|$ for the maximum norm.
For the results on lacunary dilates, assume that, for some $q>1$,
\begin{equation}\label{eq:lacunarity}
 a_{n+1}\ge qa_n\qquad(n\ge1).
\end{equation}
We also assume that $\mu$ is a Borel probability measure on $\R^d$ with polynomial Fourier decay: for some $C_\mu,\eta>0$ and all $\xi\in\R^d$,
\begin{equation}\label{eq:fourier-decay}
 |\widehat\mu(\xi)|\le C_\mu(1+\|\xi\|)^{-\eta},
 \quad\text{where}\quad
 \widehat\mu(\xi)=\int e^{-2\pi i\xi\cdot x}\,d\mu(x).
\end{equation}
This includes normalized Lebesgue measure on any bounded cube. For a
singular example, let $\nu_\lambda$ be the Bernoulli convolution given by
the law of $(1-\lambda)\sum_{n\ge0}\varepsilon_n\lambda^n$, where the
$\varepsilon_n$ are independent and uniform on $\{0,1\}$. For every
$\lambda\in(0,1/2)$ its support is a Cantor set of Lebesgue measure zero.
The Erd\H{o}s--Kahane theorem gives polynomial Fourier decay for
Lebesgue-almost every such $\lambda$; see
Shmerkin~\cite[Proposition~2.3]{Shmerkin2014}.
For such $\lambda$, $\nu_\lambda^{\otimes d}$ is singular and satisfies
\eqref{eq:fourier-decay} in every dimension.

A convex body is a compact, convex subset of $\R^d$ with nonempty interior.
For a fixed convex body $\mathcal C\subset\R^d$, choose $r_0>0$
with $4r_0\operatorname{diam}\mathcal C<1$ and put
$$
 V_N^{\mathcal C}(x)=\sup\{r^d\vol(\mathcal C):0<r\le r_0,\
 (t+r\operatorname{int}\mathcal C)\cap A_N(x)=\varnothing
 \text{ for some }t\in\T^d\}.
$$
\begin{theorem}\label{thm:fixed-shape}
Assume \eqref{eq:lacunarity} and \eqref{eq:fourier-decay}.
For each fixed convex body $\mathcal C\subset\R^d$,
$$
 \lim_{N\to\infty}\frac{NV_N^{\mathcal C}(x)}{\log N}=1
 \qquad\text{for }\mu\text{-almost every }x\in\R^d.
$$
\end{theorem}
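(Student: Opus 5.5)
We prove the two bounds $\liminf\ge1$ and $\limsup\le1$ separately. Each comes from a probabilistic estimate under $\mu$ along a geometric subsequence $N_k=\lfloor(1+\delta)^k\rfloor$, followed by Borel--Cantelli and monotonicity in $N$. Since $A_N(x)$ increases in $N$, $V_N^{\mathcal C}$ is nonincreasing. Also $(\log N)/N$ changes by a factor $1+O(\delta)$ between consecutive $N_k$. So it suffices to show, for each fixed $\varepsilon>0$ and a.e.\ $x$, that eventually $(1-\varepsilon)\log N_k/N_k\le V^{\mathcal C}_{N_k}(x)\le(1+\varepsilon)\log N_k/N_k$, and then let $\delta,\varepsilon\to0$.

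The common tool is a quantitative independence statement for lacunary dilates under $\mu$. For a smooth bump $\phi$ on $\T^d$ approximating the indicator of a small homothet $t+r\mathcal C$, we will expand $\prod_{n\in S}(1-\phi(a_nx))$ in Fourier series. Integrating against $\mu$ gives terms $\widehat\mu(\sum_n k_na_n)$. Lacunarity makes $\|\sum k_na_n\|\gtrsim a_{\max}$ unless the frequencies cancel in a controlled way, as in the Erd\H{o}s--Gál / Philipp method. The decay \eqref{eq:fourier-decay} then makes these terms negligible. To get genuine independence we thin the indices: we split $\{1,\dots,N\}$ into blocks of length $L\asymp C\log N$ and keep every other block, or use indices spaced by $L$. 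Frequencies up to $N^{O(1)}$, the scale of the bump's smoothing, are then dominated by the largest dilate, since $q^L\ge N^{C'}$. This yields, for a fixed target set $B$ of volume $v\approx\log N/N$,
\[
\mu\{x: a_nx\notin B\ \forall n\le N\}\le (1-v)^{N(1-o(1))}+N^{-10},
\]
and a matching lower bound of the same type for $\mu$ of the event that a fixed family of roughly $N/\log N$ disjoint targets has at least one empty member.

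For the \textbf{upper bound}, we cover all homothets $t+r\mathcal C$ with $r^d\vol\mathcal C=(1+\varepsilon)\log N/N$ by a net of $N^{d+o(1)}$ slightly shrunken homothets. Each is $(1-\varepsilon/3)$-times the volume, and every large empty homothet contains one of them. By the estimate above, each net element is empty with probability at most $N^{-(1+\varepsilon/2)}$. This is where the subtlety lies: the union bound over $N^d$ positions needs probability $\ll N^{-d-1}$, which the naive bound does not provide. We therefore refine it. For empty sets we exploit that, given emptiness of one cell, the points are roughly uniformly spread, and we run a second-moment/Poisson-clumping count on the number $Z$ of empty net cells. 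We bound $\mu(Z\ge1)$ via $\mathbb E Z$ restricted to a maximal disjoint subfamily: an empty homothet of volume $(1+\varepsilon)\log N/N$ contains an empty cell from a fixed grid of $\asymp N/\log N$ disjoint cells of volume $(1+\varepsilon/2)\log N/N$ (with a finite union of shifted grids, depending on $\mathcal C,\varepsilon$). That gives only $N^{1+o(1)}$ events, each of probability $N^{-1-\varepsilon/3}$. Summing over $N_k$ gives a convergent series.

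For the \textbf{lower bound}, we take $\asymp N/\log N$ disjoint homothets of volume $(1-\varepsilon)\log N/N$. Each is empty with probability about $N^{-1+\varepsilon}$, so $\mathbb E Z\ge N^{\varepsilon/2}$. A second-moment (or Janson-type) bound using the pairwise/joint version of the independence estimate gives $\mu(Z=0)\le N^{-\varepsilon/4}$, which is summable along $N_k$. We expect the main obstacle to be the independence estimate itself: making the Fourier expansion of products of up to $N$ factors rigorous with error $N^{-10}$ uniformly over targets. This needs a careful handling of resonant frequency combinations $\sum k_na_n$ with small norm, which is where lacunarity, the thinning into blocks, and the polynomial decay of $\widehat\mu$ must be balanced.
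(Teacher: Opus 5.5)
\textbf{Gap: the uniform sharp avoidance estimate is false.} Your plan rests on the bound $\mu\{x:a_nx\bmod1\notin B\ \text{for all } n\le N\}\le(1-v)^{N(1-o(1))}+N^{-10}$, uniformly over targets $B$ of volume $v\asymp\log N/N$. This bound is false, so the obstacle is not merely technical. Take $d=1$, $a_n=2^n$, $\mu$ Lebesgue measure on $[0,1]$, and $B=(0,2^{-k})$. Then $2^nx\bmod1\in B$ exactly when binary digits $n+1,\dots,n+k$ of $x$ vanish, so avoidance means no run of $k$ zeros among digits $2,\dots,N+k$. The hits come in clumps of mean size two, and the avoidance probability is $e^{-(1/2+o(1))Nv}$. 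For $v=(1+\varepsilon)\log N/N$ this is about $N^{-(1+\varepsilon)/2}$, not $N^{-(1+\varepsilon)}$.

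In general, a bounded-coefficient near relation $na_i\approx ma_j$ inside a block correlates two hit events for targets whose center satisfies an incidence condition $zx\bmod1\in E_D(I)$, with $z=na_i-ma_j$. When $z\ne0$ this condition depends on $x$, so no deterministic net or fixed disjoint family can avoid these targets.

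The missing idea is a split into generic and exceptional targets. The paper counts, through $\beta(x,I)$, the short groups containing such a correlated pair.
\begin{itemize}
\item If $\beta(x,I)\le\delta N$, the local rates $h_b$ sum to at least $(1-\tau-M\delta-o(1))Nv$, and the sharp exponent follows.
\item Otherwise only a weak uniform rate $\kappa(q)Lv$ is available, from a block second-moment bound. One must then show that empty exceptional targets are rare. For $|z|$ large, the paper inserts $\1_{E_D(I)}(zx\bmod1)$ between well-separated blocks; this costs a factor $O(v)$, which cancels the $O(v^{-1})$ translates. For $|z|$ small, it passes to the shifted lacunary frequencies $a_n-z/D$ and a bounded relative family of targets.
\end{itemize}
Your lower bound uses the same false asymptotics: single-target avoidance $\approx(1-s)^N$ and factorization for pairs. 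The paper avoids both by using the damped compensated weights $\theta^b\prod_{j\le b}(1-H_j)(1+h_j)$, averaged over all translates. Their mean is $\theta^B(1+o(1))$ uniformly in the target.

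\textbf{Further steps that fail as written.} Your thinning cannot give the exponent $N(1-o(1))$. Keeping every other block of length $C\log N$ keeps only half the indices, so a union bound yields the constant $2$. Spacing indices by $L$ loses a factor $\log N$. Retaining $N-o(N)$ indices forces long blocks (length $N^\xi$ in the paper) with short buffers, and inside such a block the dilates are dependent. The paper handles this with the local averages $h_b=\Phi_{T_b}*H_b$ and pair-intersection estimates, not with independence.

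Expanding $\prod_{n\le N}(1-\phi(a_nx))$ directly produces a Wiener norm growing exponentially in $N$, which polynomial decay of $\widehat\mu$ cannot absorb in general. The paper instead transfers one block at a time. It approximates the preceding weight and the current hit indicator by exponential sums of Wiener norm $e^{O((\log N)^5)}$, via Chebyshev polynomials, and separates scales by $\Lambda=e^{(\log N)^6}$.

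Finally, translates of a fixed body need only a net of $O(s^{-1})=N^{1+o(1)}$ elements. So the $N^{d+o(1)}$ difficulty you describe does not arise, and the union bound needs no disjointness.
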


For the supremum over all convex bodies, define
$$
 G_N^*(x)=\sup\{\vol(K):K\subset[0,1]^d\text{ is a convex body},\ \operatorname{int}K\cap A_N(x)=\varnothing\}.
$$
\begin{theorem}\label{thm:all-convex}
Assume \eqref{eq:lacunarity} and \eqref{eq:fourier-decay}.  Then
$$
 \lim_{N\to\infty}\frac{NG_N^*(x)}{\log N}=d
 \qquad\text{for }\mu\text{-almost every }x\in\R^d.
$$
\end{theorem}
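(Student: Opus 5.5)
We prove the two inequalities $\liminf_N NG_N^*(x)/\log N\ge d$ and $\limsup_N NG_N^*(x)/\log N\le d$ separately. Both reduce, via the Fourier decay \eqref{eq:fourier-decay} and lacunarity \eqref{eq:lacunarity}, to estimates on how the points $a_nx \bmod 1$ fall into a fixed, polynomially large family of test sets. These points behave like an almost independent uniform sample.

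\emph{Quasi-independence.} The first step is a quantitative independence statement. Take a set $R\subset\T^d$ from a finite family (boxes or polytopes with vertices on a fine grid of mesh $N^{-C}$), and a block of indices $n_1<\dots<n_k$ whose ratios are at least $N^{c}$; such blocks exist by lacunarity after discarding gaps of length $O(\log N)$. Smooth the indicator of $R$ at scale $N^{-C'}$ and expand in Fourier series. The joint probability $\mu(a_{n_i}x\in R\ \forall i)$ then equals $\vol(R)^k$ plus an error bounded by sums of $|\widehat\mu(\sum_i a_{n_i}m_i)|$ over nonzero $(m_i)$. Lacunarity with large ratios forces $\|\sum a_{n_i}m_i\|\gtrsim a_{n_{\max}}\|m\|/2$, so \eqref{eq:fourier-decay} makes this error $N^{-10}$ provided $a_{n_1}$ exceeds a power of $N$. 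The first $O(\log N)$ indices can simply be discarded, since they cost only a vanishing fraction.

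\emph{Upper bound.} Fix $\varepsilon>0$. Suppose a convex body $K\subset[0,1]^d$ has volume at least $(d+\varepsilon)\log N/N$. By John's theorem, or by approximating with a polytope as in a Macbeath-region argument, $K$ contains a polytope $P$ from a fixed finite family $\mathcal F_N$ with $\vol(P)\ge(1-\varepsilon/4d)\vol(K)$. The counting fact needed is that $\mathcal F_N$ can be taken of cardinality $N^{d+o(1)}$ after taking $\vol(P)\asymp \log N/N$ into account. This follows the Bárány-type count of empty convex sets: a convex set of volume $v$ in the cube is approximated by a polytope with $O(1)$ vertices modulo an affine normalization, and the degrees of freedom contribute $N^{d}$ via the $d$ translation parameters times $(\log N)^{O(1)}$. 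For each $P$, split the indices into roughly $\log N$ interleaved lacunary subsequences with ratio $N^c$ and use the quasi-independence estimate. This gives
$$\mu(P\cap A_N(x)=\varnothing)\le (1-\vol P)^{N(1-o(1))}+N^{-10}\le N^{-d-\varepsilon/2}.$$
A union bound over $\mathcal F_N$ then gives summability along $N_j=\lfloor(1+\delta)^j\rfloor$. Monotonicity of $G_N^*$ in $N$ interpolates between these values, and Borel--Cantelli yields $\limsup\le d$.

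\emph{Lower bound.} Here we exhibit a large empty convex set. Consider thin slabs or boxes near a face of the cube: for instance $K=[0,1]^{d-1}\times[0,h]$ intersected with regions near the boundary. Better, use simplices $\{y\in[0,1]^d: \sum_j y_j\le s\}$ at the $2^d$ corners and their images, together with boxes anchored at a corner, of shape $\prod[0,\ell_j]$ with $\prod\ell_j=v$. The number of essentially disjoint anchored boxes of volume $v$ with dyadic side lengths is about $(\log(1/v))^{d-1}/v$. The expected number that are empty is then about $(\log N)^{d-1}\cdot\frac{N}{\log N}\cdot N^{-(1-\varepsilon)}$, but this gives only the constant $1$. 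The gain to $d$ must come from near-boundary anchoring: a convex body touching the boundary of the cube along a large portion, where only the portion near the face counts. I would instead use the standard second-moment method on a family of $N^{d-o(1)}$ nearly disjoint candidate convex sets of volume $(d-\varepsilon)\log N/N$, namely hyperbolic-type regions $\{y: \prod_j y_j\le \cdot\}$ truncated. Showing the existence and near-independence of such a family, via pairwise quasi-independence from the first step, is the main obstacle. A Paley--Zygmund bound along a sparse subsequence $N_j$, combined with monotonicity, then gives $\liminf\ge d$ almost surely.
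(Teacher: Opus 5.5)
\textbf{Upper bound.} Your argument rests on the claim that every $P\in\mathcal F_N$ satisfies $\mu(P\cap A_N(x)=\varnothing)\le(1-\vol P)^{N(1-o(1))}+N^{-10}$. Your first step does not yield this.

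Quasi-independence inside one interleaved subsequence of ratio $N^{c}$ controls only $\asymp N/\log N$ points. So that subsequence alone avoids $P$ with probability bounded below by a positive constant. You give no way to multiply the $\asymp\log N$ class probabilities: H\"older merely returns the single-class bound, so combining classes this way gives at best $G_N^*\ll(\log N)^2/N$. A Fourier expansion of the full product also fails, because consecutive frequencies obey bounded integer relations, e.g.\ $2a_n-a_{n+1}=0$ for $a_n=2^n$.

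Worse, the uniform bound is false. Take $d=1$, $a_n=2^n$ and Lebesgue measure on $[0,1]$. The arc $P=(0,v)$ has $\mu(2^nx\bmod1\in P,\ 2^{n+1}x\bmod1\in P)=v/2$. Hits come in runs of zero binary digits, and $\mu(P\text{ empty})=e^{-(1/2+o(1))Nv}$. At your volume this is about $N^{-(1+\varepsilon)/2}$, far above the claimed $N^{-1-\varepsilon/2}$. Inner approximations of $(0,s)$ start within $O(\varepsilon s)$ of $0$ and behave the same way.

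The paper supplies two ingredients you are missing.
\begin{itemize}
\item[(i)] Near-independence over all $N$ indices. It uses blocks of length $\sqrt N$ separated by polylogarithmic buffers, and local averages $h_b=\Phi_{T_b}*H_b$ at a scale between consecutive blocks. The compensated weights $\prod_b(1-H_b)(1+h_b)$ have $\mu$-mean at most $1+o(1)$ by a Fourier transfer lemma. That lemma rests on a Chebyshev-polynomial approximation of avoidance indicators with Wiener norm $e^{O((\log N)^4)}$. On the avoidance event the weight is at least $e^{\sum_bh_b-o(1)}$. Everything then reduces to lower bounds for $h_b$, which must account for correlations between nearby indices in a block.
\item[(ii)] A treatment of arithmetically exceptional targets. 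The bound $h_b\ge(1-\varepsilon_1)Ls$ holds only for targets without exceptional short blocks. Exceptional blocks force $zx\bmod1$, with $z=ma_j-na_i$, into a set attached to the target. The rational-rank stratification shows that class-$k$ targets still hit at rate at least $1/\zeta(k)$ for $k\ge2$, while contributing only $N^{k-1+o(1)}$ effective targets. This closes the union bound because $d/\zeta(k)>k-1$.
\end{itemize}
Your count of $\mathcal F_N$ is also misattributed. Shape-adapted center meshes give only $O(s^{-1})$ translates; the remaining factor $s^{-(d-1)}$ comes from orientation nets of size $\prod_{j<k}\ell_j/\ell_k$, where $\ell_1\ge\cdots\ge\ell_d$ are the principal scales.

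\textbf{Lower bound.} This is not yet an argument. The regions $\{y:\prod_jy_j\le t\}$ are not convex. The factor $d$ is also not a boundary effect. It comes from the $d$-dimensional parameter space of thin slabs $S_{(u,t)}(s)$, with one intercept and $d-1$ slopes. A grid of spacing $\delta\gg s$ in all $d$ parameters gives $\asymp\delta^{-d}$ slabs of volume $s$ whose pairwise overlaps are $O(s^2/\delta)$. For independent points this alone yields the constant $d$, as in the paper's independent-point theorem.

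For lacunary dilates, appealing to pairwise quasi-independence has the same defect as above. Both the first moment $\mu(K\text{ empty})\approx e^{-Ns}$ and the decorrelation $\mu(K,K'\text{ both empty})=(1+o(1))\mu(K\text{ empty})\mu(K'\text{ empty})$ at probability level $N^{-c}$ require control over all $N$ indices. In addition, joint hitting of two transverse slabs within a block requires the transverse intersection bound summed against frequency ratios, $\sum_{k>j}\min\{s^2/h,\,sa_j/a_k\}$.

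The paper avoids exact avoidance probabilities altogether. It computes first and second moments of the damped weighted mass $\int_\Theta\theta^B\prod_b(1-H_{b,\vartheta})(1+h_{b,\vartheta})\,d\nu(\vartheta)$. Near parameter pairs are controlled by incidence estimates and far pairs by the transverse bound. The omitted buffer indices are then restored by a separate insertion estimate.
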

In particular, both theorems hold for Lebesgue-almost every $x\in\R^d$. We next consider the analogous extremal problem for independent uniform points in a fixed convex body.
\begin{theorem}\label{thm:iid}
Let $\Omega\subset\R^d$ be a convex body, let $Y_1,Y_2,\ldots$ be
independent uniform points in $\Omega$, and put
\[
 V_N^*=\sup\{\vol(K):K\subset\Omega\text{ is a convex body},\
          \operatorname{int}K\cap\{Y_1,\ldots,Y_N\}=\varnothing\}.
\]
Then
\[
 \lim_{N\to\infty}\frac{N V_N^*}{\vol(\Omega)\log N}=d
 \qquad\text{almost surely}.
\]
\end{theorem}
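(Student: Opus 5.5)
We normalize $\vol(\Omega)=1$ by an affine change of variables. Affine maps preserve uniformity, convexity and volume ratios, so this loses nothing. We then prove an almost sure upper bound $\limsup NV_N^*/\log N\le d$ and lower bound $\liminf\ge d$ separately. Each is obtained first along a geometric subsequence $N_k=\lfloor(1+\epsilon)^k\rfloor$ via Borel--Cantelli. The passage to all $N$ uses monotonicity: $V_N^*$ is nonincreasing in $N$, and $\log N/N$ varies by a factor at most $1+O(\epsilon)$ on $[N_k,N_{k+1}]$.

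\emph{Upper bound.} Fix $\delta>0$ and put $v=(d+\delta)\log N/N$. We need a finite family $\mathcal F_N$ of convex sets with two properties. First, every convex $K\subset\Omega$ with $\vol(K)\ge v$ should contain some $F\in\mathcal F_N$ with $\vol(F)\ge(1-\delta')v$. Second, $\#\mathcal F_N\le N^{d+o(1)}$ as $v\to0$. Given such a family, a union bound gives
\[
\Pr(V_N^*\ge v)\le \#\mathcal F_N\,(1-(1-\delta')v)^N\le N^{d+o(1)}N^{-(1-\delta')(d+\delta)},
\]
which is summable along $N_k$ once $\delta'$ is small.

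Constructing $\mathcal F_N$ is the main obstacle. The natural route is through John's ellipsoid. Every convex $K$ contains an ellipsoid $E$ with $K\subset d(E-c)+c$, but this only captures volume up to a factor $d^{-d}$, so we need a finer net. We plan to approximate $K$ from inside by a polytope with $O_\delta(1)$ vertices lying on a grid adapted to the John ellipsoid of $K$. The number of possible affine frames is governed by the following count. An ellipsoid of volume $\asymp v$ inside $\Omega$ has center in $\Omega$ and a shape matrix of determinant $\asymp v$. Discretizing these to relative precision $\delta''$ gives about $v^{-1}\cdot v^{-(d-1)}\cdot\log$-factors choices. Here $v^{-1}$ counts centers at the scale of the shortest useful resolution, and $v^{-(d-1)}$ arises because the $d$ semi-axes vary with product fixed. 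The thin direction can be as small as $v$ while the others are $O(1)$. This yields $N^{d+o(1)}$ frames, which is exactly where the constant $d$ originates. Within each frame the polytopes come from a bounded number of choices, so that factor is $O_\delta(1)$. The delicate point is making the center discretization fine in the thin direction only, at scale $v$ there and $O(1)$ elsewhere. That gives $\approx v^{-1}$ centers, not $v^{-d}$, and the shapes contribute $v^{-(d-1)}$. We would verify this count carefully, using a logarithmic grid of axis lengths $\ell_1\le\dots\le\ell_d$ with $\prod\ell_i\asymp v$ and centers on a grid of mesh $\delta''\ell_i$ along axis $i$. For fixed axes this gives $\prod \ell_i^{-1}\asymp v^{-1}$ centers. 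Rotations are discretized at angular mesh roughly $\ell_1/\ell_d\ge v$ per degree of freedom, and we need to check that this still totals $N^{d+o(1)}$. The product of the rotation count and the axis-length count must be $v^{-(d-1)+o(1)}$; if this bound fails, the constant $d$ would be wrong.

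\emph{Lower bound.} Put $v=(d-\delta)\log N/N$. We exhibit $M\ge N^{d-\delta/2}$ pairwise disjoint convex sets $K_1,\dots,K_M\subset\Omega$, each of volume $v$, each a thin slab cap near $\partial\Omega$ or a thin box. The convenient choice is thin boxes $\prod[\,c_i,c_i+\ell_i]$ with $\ell_1=v^{d}$-ish shapes. We want many disjoint empty candidates, and disjoint sets of volume $v$ number at most $1/v$, so disjointness alone cannot reach $N^{d}$. Instead we take a family of $N^{d-o(1)}$ boxes obtained by rotating and translating one thin box. Any two of these intersect in volume at most $\delta v$ except for a bounded fraction of pairs. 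We then apply the second moment method (Janson's inequality) to the count $Z$ of empty members. We have $\mathbb E Z\approx M(1-v)^N\ge N^{\delta/4}$, and low pairwise overlaps make $\operatorname{Var}Z=o((\mathbb E Z)^2)$. This gives $\Pr(Z=0)\le N^{-c}$, which is summable along $N_k$. Verifying the overlap estimate for rotated thin slabs of thickness $\approx v$ and length $O(1)$ is the second technical step. Two such slabs at angle $\theta$ overlap in volume about $v\cdot\min(1,v/\theta)$. Choosing $N^{d-1-o(1)}$ directions at angular separation $\ge v^{1-o(1)}$, together with $v^{-1}$ translates, gives the required family.
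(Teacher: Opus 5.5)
Your overall plan is the paper's: a union bound over an inner approximating family of cardinality $N^{d+o(1)}$ for the upper bound, and Chebyshev over roughly $N^{d-o(1)}$ thin slabs with separated slopes for the lower bound, followed by monotonicity along a geometric subsequence. The gap is the step you flag yourself, the orientation count, and the discretization you describe does not deliver it.

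A uniform angular mesh of size $\ell_1/\ell_d$ in each of the $d(d-1)/2$ rotational degrees of freedom gives $(\ell_d/\ell_1)^{d(d-1)/2}$ frames. Already for $d=3$ with $\ell_1\asymp v$ and $\ell_2\asymp\ell_3\asymp1$, this is $v^{-3}$. Together with the $v^{-1}$ centers, the union bound then only yields $\limsup NV_N^*/\log N\le4$. (The semi-axes on a logarithmic grid contribute only logarithmic factors; the power $v^{-(d-1)}$ must come entirely from orientations.)

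The missing idea is an anisotropic orientation net. In the rotation plane of axes $i<j$ (your increasing order), precision $\ell_i/\ell_j$ suffices. The reason is that replacing $U$ by $UR$ with $\|\mathsf D^{-1}(R-I)\mathsf D\|\le\gamma$ moves points by only $O(\gamma)$ in the coordinates normalized by $(U\mathsf D)^{-1}$. The inner margin of the approximating polytope absorbs this error. Exponentiating skew matrices with $|S_{ij}|\le c\gamma\ell_i/\ell_j$ shows that $\{R:\|\mathsf D^{-1}(R-I)\mathsf D\|\le\gamma\}$ has Haar measure $\gg\prod_{i<j}\ell_i/\ell_j$. Hence a maximal family of disjoint left translates of this set is a net, at a slightly larger precision. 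Its cardinality is at most $C\prod_{i<j}\ell_j/\ell_i=C\bigl(\prod_j\ell_j\bigr)^{-(d-1)}\prod_j\ell_j^{2(j-1)}\ll v^{-(d-1)}$, since all $\ell_j=O(1)$ and $\prod_j\ell_j\asymp v$. This is \cref{ac:catalogue}, and it is exactly where the constant $d$ comes from.

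In the lower bound, your final family is the right one, but the variance criterion you state is not. For two targets with overlap $I$, $\mathbb P(\text{both empty})/p^2=\bigl((1-2v+I)/(1-v)^2\bigr)^N\approx e^{NI}$. So overlaps of order $\delta v$ inflate the correlation by $N^{\delta(d-\delta)}$. Moreover, any fixed fraction of heavily overlapping pairs contributes about $M^2p$, which is much larger than $(\mathbb EZ)^2=M^2p^2$.

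What is actually needed is $N\max_{i\ne j}\vol(K_i\cap K_j)\to0$ at a polynomial rate. Your last construction achieves this if the angular separation is $v^{1-\gamma}$ for a fixed $\gamma>0$ with $\gamma(d-1)<\delta$. Then distinct directions overlap by $O(v^2/\theta)=O(v^{1+\gamma})$ by \cref{lem:two-slab-intersection}, and parallel translates are disjoint. This gives $\mathbb EZ=N^{\delta-\gamma(d-1)-o(1)}$ and $\operatorname{Var}Z/(\mathbb EZ)^2\ll 1/\mathbb EZ+N^{-\gamma+o(1)}$, which is summable along $N_k$. A separation $v^{1-o(1)}$ with the $o(1)$ genuinely tending to zero would give only a logarithmic rate, and that is not summable along a geometric subsequence.

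This is the paper's construction, with slope and intercept grid spacing $N^{-\beta}$ and $\beta=(c+1)/(d+1)$. Chebyshev's inequality suffices there; Janson's inequality is not needed.
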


\subsection{Previous work and consequences}
In dimension one, let $G_N(x)$ denote the maximal gap of
$A_N(x)$. For lacunary integer sequences,
Chow--Technau~\cite[Theorem~3.1]{ChowTechnau2024} proved
$G_N\ll(\log N)^{3+\varepsilon}/N$ for $\mu$-a.e.\ $x$, where
$\mu$ is a probability measure on $[0,1]$ satisfying
\eqref{eq:fourier-decay}.\footnote{For nonnegative quantities, $A\ll B$ and $B\gg A$ mean
$A\le CB$ for a constant independent of the asymptotic parameter;
$A\asymp B$ means both $A\ll B$ and $B\ll A$. Subscripts indicate
permitted dependence of the constant on fixed parameters.}
Stefanescu~\cite[Theorems~2.2 and~2.4]{Stefanescu2025Adv}
improved the exponent to $2+\varepsilon$ for positive real lacunary
sequences under the same measure hypothesis, and proved that some
dilation parameter satisfies $G_N\ll(\log N)/N$ for all sufficiently
large $N$. In higher dimensions,
Stefanescu~\cite[Theorem~1]{Stefanescu2025MZ} proved
$G_N^*\ll(\log N)^2/N$ for positive real lacunary sequences,
uniformly over all convex bodies, for $\mu$-a.e.\ $x\in[0,1]^d$,
where $\mu$ is a probability measure satisfying \eqref{eq:fourier-decay}.

Peres--Yang~\cite[Theorems~1.1 and~1.2]{PeresYang2026} proved that,
for lacunary integer sequences satisfying \eqref{eq:lacunarity} and for
Lebesgue-a.e.\ $x$,
$$
 \frac12\le\liminf_{N\to\infty}\frac{NG_N(x)}{\log N}
 \le\limsup_{N\to\infty}\frac{NG_N(x)}{\log N}
 \le\frac{q+1}{q-1}.
$$
They obtained the limit $1$ when $a_n\mid a_{n+1}$ for every $n$.
Taking $\mathcal C$ to be an interval in \cref{thm:fixed-shape}
gives the limit $1$ for every positive real sequence satisfying
\eqref{eq:lacunarity}, for $\mu$-a.e.\ $x$ under
\eqref{eq:fourier-decay}. In higher dimensions, \cref{thm:all-convex}
sharpens Stefanescu's bound to
$G_N^*(x)\sim d(\log N)/N$, while \cref{thm:fixed-shape} gives the
constant $1$ for homothets of any fixed convex body.

Theorem~\ref{thm:fixed-shape} also determines the covering radius of
$A_N(x)$ with respect to any fixed norm. Let $\mathcal K$ be its unit
ball and put $\omega=\vol(\mathcal K)$. The covering radius is
\[
 \rho_N(x)=\inf\left\{r>0:
 \T^d=\bigcup_{n=1}^N\bigl((a_nx+r\mathcal K)\bmod1\bigr)\right\}.
\]
Since $V_N^{\mathcal K}(x)=\omega\rho_N(x)^d$ eventually for
$\mu$-almost every $x$,
$$
 \rho_N(x)\sim\left(\frac{\log N}{\omega N}\right)^{1/d}
 \quad\text{for }\mu\text{-almost every }x\in\R^d.
$$
Uniform covering statements of this type are related to the
eventually-always-hitting and uniform approximation problems studied by
Kirsebom--Kunde--Persson~\cite{KirsebomKundePersson2020},
Koivusalo--Liao--Persson~\cite{KoivusaloLiaoPersson2023}, and
He--Liao~\cite{HeLiao2024}.
A related circle covering problem for shrinking intervals centered at
lacunary dilates was studied by
Hauke--Shubin--Stefanescu--Zafeiropoulos~\cite{HSSZ2026}.

Further results on selecting dilation parameters include
Alon--Peres~\cite{AlonPeres1992},
Konyagin--Ruzsa--Schlag~\cite{KRS2000},
Peres--Schlag~\cite{PeresSchlag2010}, and
Moshchevitin~\cite{Moshchevitin2012}. For local spacing statistics,
Rudnick--Zaharescu~\cite{RudnickZaharescu2002} and
Chaubey--Yesha~\cite{ChaubeyYesha2022} established Poissonian phenomena
for lacunary dilations.

For independent uniform points, Devroye~\cite{Devroye1982} obtained
the one-dimensional maximal-gap asymptotics, while
Janson~\cite{Janson1987} proved the higher-dimensional fixed-shape
analogue, again with leading constant $1$.
Deheuvels--Einmahl--Mason--Ruymgaart~\cite[Theorem~1(II) and
Example~2]{DEMR1988} showed that, for iid uniform points in the unit
cube and fixed dimension, the same constant $1$ persists for all
axis-parallel boxes, even with arbitrary aspect ratios.

In dimension two, \cref{thm:iid} gives the exact constant $2$ for
the maximal volume of an empty convex body.
Arizmendi--Salazar~\cite[Theorem~1.2 and Section~6]{ArizmendiSalazar2016}
proved high-probability bounds with leading constants $1$ and $4$
and conjectured the value $1$. Thus \cref{thm:iid} disproves that
conjecture on the unit square.
Aldous~\cite[H15.1]{Aldous1989} had already asked about the maximal
volume of an empty convex body for a Poisson point process in the unit square.
For planar rectangles of arbitrary orientation,
Arizmendi--Salazar~\cite[Theorem~4.1]{ArizmendiSalazar2016} proved
the upper coefficient $2$.
Auer--Hornik~\cite{AuerHornik1994} obtained earlier covering
estimates that imply the same upper coefficient; see also
\cite[pp.~6--7]{AuerHornik1996}.

\subsection{Lacunarity and sparsity}
The following theorem shows that lacunarity cannot be replaced by sparsity
on sublacunary scales, even under the strong condition that each interval
$(b_k,b_{k+1}]$ contains at most one term of the sequence.

\begin{theorem}\label{thm:sparse}
Let $(b_k)_{k\ge1}$ be positive, strictly increasing and unbounded, with
$b_{k+1}/b_k\to1$.  There is a strictly increasing sequence $(a_n)$ of
positive integers such that
$$
 \#\{n:b_k<a_n\le b_{k+1}\}\le1
 \qquad(k\ge1),
$$
and, for Lebesgue-almost every $x\in\T$,
$$
 \liminf_{N\to\infty}NG_N(x)=1,
 \qquad
 \limsup_{N\to\infty}\frac{NG_N(x)}{\log N}=\infty.
$$
\end{theorem}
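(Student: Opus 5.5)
The plan is to build $(a_n)$ in consecutive blocks, each an arithmetic progression. Block $k$ consists of the integers
\[
c_k+jD_k,\qquad 1\le j\le L_k .
\]
The point set $\{(c_k+jD_k)x\bmod 1\}$ is a rotation of $\{jy\bmod 1: j\le L_k\}$ with $y=D_kx$. So by the three-gap theorem its gap structure is governed entirely by the continued fraction expansion of $y$.

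\textbf{Sparsity.} Since $b_{i+1}/b_i\to1$, we have $b_{i+1}-b_i=o(b_i)$. We therefore choose $c_k$ so large that the spacings of $(b_i)$ inside $[c_k,2c_k]$ are at most $\varepsilon_kc_k/L_k$. We then take $D_k$ to be an integer exceeding this bound, with $L_kD_k\le c_k$. Each interval $(b_i,b_{i+1}]$ then contains at most one term. We also require $c_{k+1}\gg c_k$, which keeps the sequence increasing.

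\textbf{Quasi-independence.} Next we take $D_{k+1}$ to be a huge multiple of everything used before. Then, conditionally on the first $k$ blocks, $D_{k+1}x\bmod1$ is nearly uniformly distributed. This lets us apply a second Borel--Cantelli lemma (in its quasi-independent form) to events $E_k$ that depend only on $D_kx\bmod1$.

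\textbf{The liminf.} We always have $NG_N\ge1$, since $N$ points leave a gap of length at least $1/N$. For the upper bound, let $E_k$ be the event that $y=D_kx$ lies within $\delta_k/L_k^2$ of some $p/L_k$ with $\gcd(p,L_k)=1$. On $E_k$ the points $jy$, $j\le L_k$, are within $\delta_k/L_k$ of the $L_k$-th roots of unity. Hence their maximal gap is at most $(1+2\delta_k)/L_k$.

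The probability of $E_k$ is about $\varphi(L_k)\delta_k/L_k^2$, which is too small to be summable-divergent. The fix is to replace ``roots of unity of order exactly $L_k$'' by ``order $m\in[L_k(1-\delta_k),L_k]$''. With that change the event has probability $\gg\delta_k^2$, and choosing $\sum\delta_k^2=\infty$ gives infinitely many occurrences almost surely. Finally we take $L_k\gg N_{k-1}/\delta_k$, where $N_{k-1}$ is the number of earlier terms, and use $N=N_{k-1}+L_k$. The earlier points can only shrink gaps, and $N/L_k\to1$, so $\liminf NG_N\le 1$.

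\textbf{The limsup, which is the main obstacle.} Here the earlier points have to be controlled rather than ignored. I would interleave a second type of stage. At such a stage we first append a very short block, then test an event $F_k$ concerning $y=D_kx$ for that block alone: that $y$ is within $\varepsilon/(L m)$ of $p/m$, with $m=L/\psi(L)$ and $\psi\to\infty$. On $F_k$ the $L$ new points cluster near the $m$ points $p'/m$, leaving gaps of length about $1/m$.

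The previously placed $N'$ points must not destroy all these gaps. To arrange this, I would choose the stage lengths so that $N'\ll m$. Then at least $m-N'$ of the $m$ arcs of length $1/m$ remain empty, and $G_N\gg 1/m=\psi(L)/L$ with $N\asymp L$. We need $\psi(L)/\log L\to\infty$.

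The probability of $F_k$ is roughly $\sum_{m'\approx m}\varphi(m')\,\varepsilon/(Lm')\asymp\varepsilon/\psi(L)$. So we need $\sum_k 1/\psi(L_k)=\infty$ while keeping $N'_k\ll L_k/\psi(L_k)$. Both can be met by letting the $L_k$ grow only geometrically and taking $\psi(L)=(\log L)\log\log L$, because then $\sum_k 1/(k\log k)=\infty$.

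The delicate point, where I expect most of the work, is reconciling these two kinds of stages in one sequence. The geometric growth needed for the limsup stages has to coexist with the requirement $L_k\gg N_{k-1}$ for the liminf stages. I would handle this by using disjoint subsequences of stages for the two purposes, and by letting the liminf stages be rare and very long.
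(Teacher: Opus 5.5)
\textbf{Gap: the limsup numerology.} Your sparsity construction, the liminf stages and the quasi-independent Borel--Cantelli step are sound. On sparsity, $D_k$ can be taken in the window $(\varepsilon_kc_k/L_k,c_k/L_k]$, which for large $c_k$ contains integers as large, or as divisible, as quasi-independence requires. On the liminf stages, the short-interval count $\sum_{m\in[(1-\delta)L,L]}\varphi(m)/m\asymp\delta L$ does give probability $\gg\delta_k^2$.

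The problem is the parameter choice for the limsup stages, which is self-contradictory. With geometric growth, $N'_k\ge L_{k-1}\asymp L_k$, so your requirement $N'_k\ll m_k=L_k/\psi(L_k)$ fails outright. Imposing it forces $L_k\gg\psi(L_k)L_{k-1}$. Hence $\log L_k\ge\log L_{k-1}+\log\log L_k-O(1)$, and so $\log L_k\gg k\log k$. With $\psi(L)=\log L\,\log\log L$ this gives $\psi(L_k)\gg k(\log k)^2$. Then $\sum_k\mathbb P(F_k)\asymp\sum_k1/\psi(L_k)<\infty$, and by the first Borel--Cantelli lemma $F_k$ occurs only finitely often for almost every $x$. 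So the limsup part, as stated, produces nothing.

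The real tension is inside the limsup stages, not between the two kinds of stages. A liminf stage only needs length $\gg N_{\mathrm{prev}}/\delta_j$, which adds $O(\log j)$ to the logarithm of the lengths.

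\textbf{How to repair it.} The route can be saved, with a very thin margin. Take $\psi(L)=\log L\cdot\log\log\log L$ and $L_k\approx N'_k(\log N'_k)^2$. Then:
\begin{itemize}
\item $m_k\gg N'_k\log N'_k/\log\log\log N'_k\gg N'_k$, so the earlier points cannot fill all the arcs;
\item $\log L_k\asymp k\log k$, even with liminf stages interleaved;
\item $\psi(L_k)\asymp k\log k\,\log\log k$, so $\sum_k1/\psi(L_k)=\infty$;
\item $\psi(L)/\log L\to\infty$, so the limsup conclusion still follows.
\end{itemize}

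\textbf{Comparison with the paper.} The paper avoids this difficulty by using every scale inside one block. Each stage is a single progression $\{d_j,\dots,K_jd_j\}$. Khintchine's theorem gives, for a.e.\ $\theta$, infinitely many reduced $p/q$ with $|\theta-p/q|<1/(8jq^2\log q)$. One such approximation produces both phenomena at two initial segments of the same block:
\begin{itemize}
\item the first $q$ multiples lie near the uniform $q$-grid, giving $NG_N<(1+1/j)^2$;
\item the first $\lfloor 4jq\log q\rfloor$ multiples are all displaced in the same direction by less than $1/(2q)$, leaving $q>C_j$ empty arcs of length $1/(2q)$.
\end{itemize}
Continuity from below lets $K_j$ be chosen so that the failure set has measure at most $2^{-j}$. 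Since $x\mapsto d_jx$ preserves Lebesgue measure, the first Borel--Cantelli lemma suffices. No quasi-independence, interleaving of stage types, or delicate growth conditions are needed. Your one-scale-per-block design is what forces the second Borel--Cantelli lemma and the $\log\log\log$-level margin.
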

For example, one may take $b_k=2^{k^\alpha}$ with $0<\alpha<1$.
The construction uses widely separated finite arithmetic progressions.
For Lebesgue-almost every dilation parameter, every sufficiently late
arithmetic block has an initial segment with nearly uniform spacing and
a longer initial segment with an unusually large maximal gap.

\subsection{Proof strategy and organization}
The leading constants reflect geometric approximation counts. At volume
scale $s$, we use $O(s^{-1})$ inner approximants for homothets of a fixed
body and $O(s^{-d}(\log(1/s))^{C_d})$ for arbitrary convex bodies.
For lacunary dilates, the difficulty is to obtain the avoidance bounds
needed to exploit these counts without independence.

We adapt the block method of Peres--Yang~\cite{PeresYang2026} and the
scale-separation method of Peres--Schlag~\cite[Section~3]{PeresSchlag2010}.
The indices are divided into active blocks separated by buffers of
omitted indices. For each block, we use a local average $h_b$ of its
hit indicator. Scale separation allows the indicator to be replaced by
$h_b$ in integrals weighted by the preceding survival history. This
transfer follows from translation averaging for Lebesgue measure and
from trigonometric approximation and Fourier decay for $\mu$.
Compensated survivor weights then give exponential avoidance bounds.
For generic targets, the local hitting rate is asymptotically the
independent-point rate.

Targets with large pair intersections require an arithmetic argument.
Such intersections force bounded-coefficient relations among frequencies
and restrict the target centers. We handle large residuals in these
relations by frequency separation and small ones by shifted frequency
sequences.
In higher dimensions, rational-rank stratification controls the number
of exceptional targets, while uniform positive hitting estimates handle
the remaining cases. Together these arguments complete the upper bounds.

For the lower bounds, we average damped survivor weights over
translations of a fixed body or over the intercepts and slopes of thin
slabs. Near pairs of target parameters are controlled by incidence
estimates. For thin slabs, far pairs are controlled by transverse
intersection estimates.
The weighted survivor mass has expectation $(1+o(1))e^{-Ns}$ and
vanishing relative variance below the respective thresholds. The
expected weighted mass of targets that survive the active blocks but
are hit at omitted indices is $o(e^{-Ns})$. Hence, with probability
tending to one, at least one target avoids all $N$ points.

Section~\ref{sec:one-dimensional} proves the one-dimensional Lebesgue
case. Section~\ref{sec:iid-points} proves \cref{thm:iid} and establishes
the geometric estimates used later. Sections~\ref{sec:prelim}
and~\ref{sec:all-convex-geometry} develop the Fourier transfer and local
hitting estimates. Sections~\ref{sec:all-convex-upper}
and~\ref{sec:all-convex-lower} prove
\cref{thm:fixed-shape,thm:all-convex} in full generality.
Section~\ref{sec:general-sparse-counterexample} proves \cref{thm:sparse}.

\section{The one-dimensional Lebesgue argument}\label{sec:one-dimensional}
We prove \cref{thm:fixed-shape} in dimension one for Lebesgue measure.
Since convex bodies in $\R$ are intervals, it suffices to show
\[
 \frac{NG_N(x)}{\log N}\longrightarrow1
 \quad\text{for Lebesgue-a.e. }x\in\R,
\]
where $G_N(x)$ denotes the maximal gap of the points
$a_1x,\ldots,a_Nx\bmod1$ on $\T$.

We adapt the block method and local counting estimates of
Peres--Yang~\cite{PeresYang2026}. Local averaging controls compensated
survival weights in both bounds. To obtain the sharp constant for general
positive real lacunary sequences, we localize large pair intersections.

Fix a bounded interval $Q_0\subset\R$ of positive length. For a measurable
set $E\subset\R$ and an integrable function $f$, put
\[
 \mathbb P(E)=\frac{|E\cap Q_0|}{|Q_0|},\qquad
 \mathbb E f=\frac1{|Q_0|}\int_{Q_0}f(x)\,dx.
\]
For $t\in\T$ and $0<s<1$, let $I_t(s)=(t,t+s)\pmod1$ be the
open arc of length $s$ starting at $t$. Write
$\|y\|_{\R/\Z}=\min_{j\in\Z}|y-j|$ for distance to the nearest integer.
Constants in the estimates may depend on $q,Q_0$ and parameters fixed
below. The threshold for sufficiently large $N$ may also depend on $a_1$.
Unless stated otherwise, the estimates are uniform in the block indices
and arc positions; pointwise estimates are uniform in $x$.
We will prove that, for each $0<c<1<c'<2$, both
\[
 \mathbb P\{G_N<c\log N/N\},\qquad
 \mathbb P\{G_N>c'\log N/N\}
\]
are bounded by a negative power of $N$. The almost-sure conclusion will
then follow by Borel--Cantelli and monotonicity.

\subsection{Block averaging and transfer}
Let $\Phi=\tfrac12\1_{[-1,1]}$. For $T\ge1$, put
$\Phi_T(u)=T\Phi(Tu)$, so that
\[
 (\Phi_T*f)(x)=\frac T2\int_{x-1/T}^{x+1/T}f(y)\,dy.
\]
Fix an integer $K\ge30$, put $q_0=(q+1)/2$, and choose
the buffer length
\[
 g=\left\lceil\frac{4K\log N}{\log q_0}\right\rceil.
\]
Let $L=\lfloor N^\xi\rfloor$ for some fixed $0<\xi<1$, to be
specified in the upper- and lower-bound proofs. The active blocks are
\[
 \mathcal I_b=\{(b-1)(L+g)+g+1,\ldots,b(L+g)\},
 \qquad 1\le b\le B,
\]
where $B=\lfloor N/(L+g)\rfloor$. There are $g$ omitted indices before
each block, and the final incomplete block is also omitted. Thus
$N-BL=O(Ng/L+L+g)=o(N/\log N)$. Set $a_0=1$ and define
\[
 T_b=N^K\max\{1,a_{(b-1)(L+g)}\}.
\]
Since $q_0^g\ge N^{4K}$, for all sufficiently large $N$,
\[
 \min_{n\in\mathcal I_b}a_n\ge N^KT_b.
\]
The same estimate holds uniformly for $q_0$-lacunary sequences
whose first term is at least $1$, with $T_b$ defined accordingly.
For an arc $I$, define
\[
 H_b(x,I)=\1_{\{a_nx\bmod1\in I\text{ for some }n\in\mathcal I_b\}},
 \qquad h_b(x,I)=(\Phi_{T_b}*H_b(\cdot,I))(x).
\]
When $s$ is fixed, abbreviate $H_b(x,t)=H_b(x,I_t(s))$ and
$h_b(x,t)=h_b(x,I_t(s))$.

For an arbitrary arc $I$ of length $v$, counting full periods and the
two possible incomplete periods gives, uniformly in $x$,
\[
 \frac T2\int_{x-1/T}^{x+1/T}\1_I(ay\bmod1)\,dy
 =v+O(T/a).
\]
Since $a_n\ge N^KT_b$ for $n\in\mathcal I_b$, it follows that
\[
 0\le h_b(x,I)\le\sum_{n\in\mathcal I_b}\left(v+\frac{T_b}{a_n}\right)
 \le L(v+N^{-10}).
\]
For applications with $v\le s<2\log N/N$, put
$p=L(s+N^{-10})=o(1)$; then $h_b(x,I)\le p$. Since $BL\le N$, we have
$Bp\le N(s+N^{-10})\le3\log N$ and
$Bp^2\le NL(s+N^{-10})^2=o(1)$.

For an arc $I$ of length at most $s$, define the compensated survival
weight
\[
 W_0(x,I)=1,\qquad
 W_b(x,I)=\prod_{j=1}^b(1-H_j(x,I))(1+h_j(x,I)).
\]
Let $I_1,I_2$ be arcs of lengths at most $s$, and let
$J_1,J_2\subset\{1,\ldots,B\}$. Consider weights of the form
\[
 F(x)=\prod_{\ell=1}^2\prod_{j\in J_\ell}
       (1-H_j(x,I_\ell))(1+h_j(x,I_\ell)).
\]
Empty products and constant multiples in $[0,1]$ are allowed.
When applying the averaging estimate to
block $b$, we require $J_1,J_2\subset\{1,\ldots,b-1\}$. Each of the
two products is at most $(1+p)^B\le e^{Bp}\le N^3$, so
$0\le F\le N^6$. We also allow one additional factor
$\1_E(z_0x\bmod1)$, where $z_0\in\R\setminus\{0\}$ and $E$ is an arc.
For such a weight $F$, set
\[
 a_* = \max\bigl(\{1\}\cup
       \{a_n:n\in\mathcal I_j,\ j\in J_1\cup J_2\}\bigr).
\]
If $F$ contains the additional indicator, include $|z_0|$ in this
maximum. Thus $a_*=1$ if no block or indicator factors occur. In the
single-insertion estimate below, $z$ is the frequency of a new
indicator and is not included in $a_*$.
\begin{lemma}\label{lem:od-transfer}
For a weight $F$ as above, with $J_1,J_2\subset\{1,\ldots,b-1\}$,
and any arc $I$ of length at most $s$, one has
\begin{equation}\label{od:transfer}
 \mathbb E[FH_b(\cdot,I)]=\mathbb E[Fh_b(\cdot,I)]+O(N^{-10})
 \quad\text{whenever }T_b\ge N^Ka_*.
\end{equation}
For a weight $F$ without an inserted indicator,
\begin{equation}\label{od:single-insertion}
 \mathbb E[F\1_E(zx\bmod1)]
 =|E|\mathbb EF+O(N^{-10})
 \quad\text{if }|z|\ge N^Ka_*.
\end{equation}
The estimates are uniform in the positions of the arcs and remain
valid if some preceding block factors are omitted.
\end{lemma}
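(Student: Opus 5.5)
The plan is to use that $F$ is essentially constant at scale $1/T_b$, while $H_b(\cdot,I)$ and $\1_E(z\,\cdot)$ oscillate much faster, and to compare each expectation with its local average. The one identity driving everything is self-adjointness of convolution with the even kernel $\Phi_T$: for bounded $f,g$ supported near $Q_0$,
\[
 \int_{Q_0}(\Phi_T*f)\,g\,dx=\int_{Q_0} f\,(\Phi_T*g)\,dx+O(\|f\|_\infty\|g\|_\infty/T).
\]
The error comes from the two boundary intervals of length $1/T$ at the ends of $Q_0$.

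\textbf{Proof of \eqref{od:transfer}.}
1. Write $\mathbb E[Fh_b]=\mathbb E[F\,(\Phi_{T_b}*H_b)]$ and move the convolution onto $F$. This gives
\[
 \mathbb E[Fh_b]=\mathbb E[(\Phi_{T_b}*F)H_b]+O(N^6/T_b).
\]
Since $T_b\ge N^K$ with $K\ge 30$, the error term is $O(N^{-10})$.
2. It then suffices to show $\mathbb E[|F-\Phi_{T_b}*F|\,H_b]=O(N^{-10})$. Since $0\le H_b\le1$, this reduces to bounding $\mathbb E|F-\Phi_{T_b}*F|$.
3. $F$ is a product of at most $2B+1$ factors. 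Each factor is one of three kinds: $1-H_j$, which is a union of indicators of $\{a_nx\bmod 1\in I\}$ with $a_n\le a_*$; $1+h_j$, which is Lipschitz with constant $O(T_j)\le O(a_*)$ and bounded by $1+p$; or $\1_E(z_0x)$ with $|z_0|\le a_*$.
4. Over a window of length $2/T_b\le 2N^{-K}/a_*$, each factor is constant outside a set of $x$ whose measure is controlled as follows.
 (a) For an indicator factor $\1_I(a_n x)$, the set where it is nonconstant lies within $1/T_b$ of the $O(a_n)$ boundary points per unit length. Its measure is therefore $O(a_n/T_b)=O(N^{-K})$ for each $n$.
 (b) Summing over at most $2N+1$ indices gives total measure $O(N^{1-K})$.
 (c) The Lipschitz factors change by a relative amount $O(a_*/T_b)$ over the window. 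Over $2B$ such factors this gives a relative change of $O(NN^{-K})$.
5. Combining these with $\|F\|_\infty\le N^6$ yields $\mathbb E|F-\Phi_{T_b}*F|=O(N^{7-K})$. With $K\ge30$ this is $O(N^{-10})$.
6. The argument only uses each factor's frequency bound and the uniform bound on $F$. Omitting factors keeps both, so the estimate stays uniform in the arc positions and remains valid when some preceding block factors are omitted.

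\textbf{Proof of \eqref{od:single-insertion}.} Apply the same smoothing at scale $T=N^{K/2}a_*$.
1. By steps 2–5 run at scale $T$, $\mathbb E|F-\Phi_T*F|=O(N^{7}a_*/T)$, which is $O(N^{7-K/2})$.
2. For the inserted factor, the counting identity gives $\Phi_T*\1_E(z\,\cdot)=|E|+O(T/|z|)$. Here $|z|\ge N^Ka_*$, so $T/|z|\le N^{-K/2}$.
3. Moving the convolution as in the identity above then gives
\[
 \mathbb E[F\1_E(zx)]=|E|\,\mathbb E F+O(N^{6}\cdot N^{-K/2}+N^{7-K/2}).
\]
Since $K\ge30$, this is $O(N^{-8})$.
4. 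This is short of the required $O(N^{-10})$. To close the gap, I would choose $T=N^{2K/3}a_*$ in place of $N^{K/2}a_*$, balancing the two errors.

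\textbf{Main obstacle.} The delicate step is the bookkeeping in step 4 of the first proof. The boundary-layer set is measured for each indicator separately and then summed, so I must confirm that the number of indicator factors stays $O(N)$ and that every relevant frequency is at most $a_*$. This is exactly why $a_*$ includes $|z_0|$. The exponent accounting against the factor $N^6$ also has to leave a margin of $N^{-10}$, and this is what forces the choice of $K$ and of $T$ in the second part.
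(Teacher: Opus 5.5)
Your proof of \eqref{od:transfer} is sound and is essentially the paper's argument. The paper also moves the symmetric kernel onto $F$ and needs $\int|F-\Phi_T*F|$ to be small. It gets this from $\operatorname{TV}(\1_{Q_0}F)\ll N^7a_*$ and the translation inequality. Your local-constancy count (jumps of the indicator factors, plus the Lipschitz bound $T_j\le a_*$ for the averaged factors) reproduces that bound, giving an error $O(N^{7-K})$.

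The gap is in \eqref{od:single-insertion}. Smoothing at an auxiliary scale $T$ and using only $\Phi_T*\1_E(z\,\cdot)=|E|+O(T/|z|)$ together with $F\le N^6$ produces two errors. The first is $N^7a_*/T$, from replacing $F$ by $\Phi_T*F$. The second is $N^6T/|z|$, from the inserted indicator. Their product is $N^{13}a_*/|z|$ for every $T$. So the best bound this scheme can certify is about $N^{13/2}(a_*/|z|)^{1/2}\le N^{(13-K)/2}$, which is only $N^{-17/2}$ when $K=30$. No choice of $T$ reaches $N^{-10}$ unless $K\ge33$. Your proposed $T=N^{2K/3}a_*$ is actually worse: the second error becomes $N^{6-K/3}=N^{-4}$. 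As written, the stated estimate is therefore not proved.

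The square-root loss comes from smoothing twice and is avoidable in either of two ways.
\begin{itemize}
\item Integrate by parts directly, as the paper does. The function $\1_E(zx)-|E|$ is periodic with mean zero, so it has a primitive bounded by $1/|z|$. Since $\1_{Q_0}F$ has total variation $\ll N^7a_*$, the error is $O(N^7a_*/|z|)=O(N^{7-K})$.
\item Stay inside your own scheme but take $T=|z|$. The box kernel $\Phi_{|z|}$ then averages over exactly two periods of $\1_E(z\,\cdot)$, so $\Phi_{|z|}*\1_E(z\,\cdot)\equiv|E|$ identically. The only remaining errors are your step-1 bound $O(N^7a_*/|z|)$ and the boundary term $O(N^6/|z|)$, both $O(N^{7-K})$.
\end{itemize}
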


\begin{proof}
Let $H:\R\to[0,1]$ be measurable, and suppose $G\in L^1(\R)$ satisfies
$\int_\R|G(x+u)-G(x)|\,dx\le M|u|$ for all $|u|\le1$,
where $M\ge0$ is a constant. Symmetry of $\Phi_T$ and Fubini give
\begin{equation}\label{od:averaging}
 \int G(H-\Phi_T*H)\,dx
 =\int H(G-\Phi_T*G)\,dx,
 \qquad
 \left|\int G(H-\Phi_T*H)\,dx\right|\le\frac{M}{2T}.
\end{equation}
Write $\operatorname{TV}_J(f)$ for the total variation of $f$ on an interval $J$.
On $Q_0+[-2,2]$, the indicator $\1_I(ax\bmod1)$ has at most
$C(1+a)$ jumps. The discontinuities of $H_j$ occur among those of the
individual indicators, and convolution does not increase total variation.
Consequently, on $Q_0+[-1,1]$,
\[
 \operatorname{TV}_{Q_0+[-1,1]}(H_j(\cdot,I_\ell))
 +\operatorname{TV}_{Q_0+[-1,1]}(h_j(\cdot,I_\ell))
 \ll\sum_{n\in\mathcal I_j}(1+a_n).
\]
The product variation inequality and $0\le F\le N^6$ now give
\[
 \begin{aligned}
 \operatorname{TV}_{Q_0+[-1,1]}(F)
 \ll N^6\left(1+|z_0|
   +\sum_{\ell=1}^2\sum_{j\in J_\ell}
      \sum_{n\in\mathcal I_j}(1+a_n)\right)\ll N^7a_*.
 \end{aligned}
\]
Here the term $|z_0|$ is omitted when no additional indicator occurs;
there are at most $2N$ summands in the triple sum, each bounded by
$1+a_*$. The restriction to $Q_0$ adds at most $2N^6$ to the total
variation. The translation inequality for functions of bounded
variation therefore yields
\[
 \begin{aligned}
 \int_\R\left|\1_{Q_0}(x+u)F(x+u)
                   -\1_{Q_0}(x)F(x)\right|\,dx\le |u|\operatorname{TV}_\R(\1_{Q_0}F)
 \ll N^7a_*|u|\qquad(|u|\le1).
 \end{aligned}
\]
Apply \eqref{od:averaging} with
$G=|Q_0|^{-1}\1_{Q_0}F$ and $H=H_b(\cdot,I)$ to obtain
\eqref{od:transfer}. We use this estimate in place of independence
between blocks.

For the second estimate, the periodic function
$\1_E(zx\bmod1)-|E|$ has a primitive bounded by $C/|z|$.
Integration by parts and the preceding variation bound prove
\eqref{od:single-insertion}.
\end{proof}

\subsection{Pair localization and hitting estimates}
For an interval $Q\subset\R$ of length $\rho$, write
$\mathbb P_Q(E)=|E\cap Q|/\rho$, and let $\mathbb E_Q$ denote expectation
with respect to $\mathbb P_Q$. Throughout this subsection, $I$ is an
arc of length $v$, and $a<b$ are positive. Counting the periods of
$ax\bmod1$ that meet $Q$ gives
\[
 \mathbb P_Q(ax\bmod1\in I)=v\bigl(1+O((a\rho)^{-1})\bigr).
\]
\begin{lemma}\label{lem:od-pairs}
If $a\rho\ge4$, then
\begin{equation}\label{od:pair-coarse}
 \mathbb P_Q(ax\bmod1\in I,bx\bmod1\in I)
 \le C(v^2+va/b).
\end{equation}
Moreover, for every $\epsilon>0$ there are constants $H,C_0,T_0$,
depending only on $q,\epsilon$, such that, for all sufficiently small
$v$, if $b/a\ge q$, $a\rho\ge T_0$, and the pair intersection probability
is greater than $\epsilon v$, then some integers $m,n$ satisfy
\begin{equation}\label{od:pair-return}
 1\le m<n\le H,\qquad |na-mb|\rho\le C_0v.
\end{equation}
Let $t_I\in\T$ be the center of $I$. For $1\le D\le H$, put
\begin{equation}\label{od:incidence-arcs}
 E_D(I)=\{y\in\T:\|y-Dt_I\|_{\R/\Z}\le C_1v\},
\end{equation}
where $C_1=C_1(q,\epsilon)$ is sufficiently large. If the relation
\eqref{od:pair-return} holds and both dilates hit $I$ at $x_0\in Q$,
then
\begin{equation}\label{od:pair-location}
 z x\bmod1\in E_D(I)\quad\text{for every }x\in Q,
 \qquad z=na-mb,\quad D=n-m.
\end{equation}
\end{lemma}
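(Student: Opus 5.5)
We change variables to $u=ax$, so that $x\mapsto(ax,bx)$ traces a line segment of slope $\beta=b/a$ in $\R^2$. As $x$ runs over $Q$, $u$ runs over an interval of length $a\rho$, and the event is that $(u,\beta u)\bmod\Z^2$ lies in the square $I\times I$. All three assertions will come from counting, and then localizing, the hits of this segment on the torus.

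\textbf{Coarse bound \eqref{od:pair-coarse}.} Split $u$ into the at most $a\rho+2\le 2a\rho$ unit periods. In each period the condition $u\bmod1\in I$ restricts $u$ to an interval of length $v$. On that interval $\beta u$ varies over length $\beta v=vb/a$. The measure of the $u$ in it with $\beta u\bmod1\in I$ is at most $(\beta v+2)\cdot v/\beta$, since there are at most $\beta v+2$ periods of $\beta u$, each contributing at most $v/\beta$. This gives $v^2+2va/b$ per period. Multiplying by $2a\rho$ periods and dividing by the normalization $a\rho$ yields $C(v^2+va/b)$.

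\textbf{Return relation \eqref{od:pair-return}.} The previous count shows that within one $u$-period the contribution is $v\cdot(\#\text{hits})/\beta$ up to $O(v^2)$. If the total probability exceeds $\epsilon v$, then since $v^2\ll\epsilon v$ for small $v$, a positive proportion (depending on $\epsilon$) of the points $u_k=k+t_I$, $k$ in a window of $a\rho$ consecutive integers, must satisfy $\|\beta u_k-t_I\|\le C v$ up to bounded slack. Here the slack accounts for the width-$v$ interval around each $u_k$; the allowed slack in $\beta u$ is $(1+\beta)v$, but we only need hits of density $\gg\epsilon$. So the sequence $k\beta$ has at least $\gg\epsilon a\rho$ returns to an arc of length $O(v)$ around a fixed point among $a\rho$ consecutive $k$. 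Taking differences of returning indices gives many $m'\le a\rho$ with $\|m'\beta\|\le Cv$.

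Next we apply a pigeonhole/Dirichlet argument. Let $n$ be the smallest positive such difference. If $n>H$, the returns within a window have gap at least $n$, which forces density at most $1/n<\epsilon$ once $H\gg1/\epsilon$; this contradicts the positive density. So there is $n\le H$ with $\|n\beta\|\le Cv$, that is, $|n\beta-m|\le Cv$ with $m<n\beta$, and $m\ge1$ because $\beta\ge q$. The real relation must hold over the whole window: the deviation of $k\beta$ from its progression accumulates linearly, so to keep $\gg\epsilon a\rho$ returns over a length $a\rho$ one needs $|n\beta-m|\cdot a\rho\ll_\epsilon v$. Rescaling by $a$ gives $|nb-ma|\rho\le C_0v$. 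Written with the paper's labels this reads $|na-mb|\rho\le C_0v$ after renaming, and the condition $m<n$ is arranged via whichever of $na,mb$ is the larger frequency coefficient. $T_0$ ensures that the window contains enough integers for density arguments, i.e.\ $a\rho\ge T_0(\epsilon)$.

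\textbf{Location \eqref{od:pair-location}.} At $x_0$ we have $ax_0\equiv t_I+O(v)$ and $bx_0\equiv t_I+O(v)$. Hence $zx_0=n(ax_0)-m(bx_0)\equiv(n-m)t_I+O(Hv)$. For any $x\in Q$, $|zx-zx_0|\le|z|\rho\le C_0v$. Thus $\|zx-Dt_I\|\le(2H+C_0)v\le C_1v$.

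The main obstacle is the second step: upgrading "positive density of returns" to a single small relation $n\le H$ that holds uniformly across the window with error $O(v/(a\rho))$. This requires care in the Dirichlet/gap argument so that the constants depend only on $q,\epsilon$.
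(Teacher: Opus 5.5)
Your coarse bound and your location step are correct and coincide with the paper's arguments. Your plan for \eqref{od:pair-return} is also the paper's route: positive-density visits of the rotation orbit $k\beta$ to a short arc, a return time $n\le H$ by pigeonhole, then drift along multiples of $n$. That middle step, however, has two genuine holes.

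\emph{Missing reduction to bounded ratio.} You never reduce to bounded $\beta=b/a$. Your remark that the slack $(1+\beta)v$ is harmless because one ``only needs hits of density $\gg\epsilon$'' is false, since the statement allows any $b/a\ge q$. Once $\beta v\gtrsim1$ your arc is the whole circle, and positive density carries no information. Even for moderate $\beta$ the arc has length $(1+\beta)v$, so your $C_0$ depends on $\beta$. Likewise your larger coefficient, the integer nearest $n\beta$, is not bounded by any $H(q,\epsilon)$. The paper's first move repairs this. By \eqref{od:pair-coarse}, if $Cv<\epsilon/2$ and $b/a\ge R:=2C/\epsilon$, the pair probability is at most $\epsilon v$, so an exceptional pair has $b/a<R$. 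Your arc then has length $O_\epsilon(v)$, and the larger coefficient is at most $RH+1$. (The paper even gets length exactly $v$ by fixing, via Fubini, one point $y$ of the lift of $I$ for which a fraction $\epsilon/4$ of the periods $k$ satisfy $\beta(k+y)\bmod1\in I$.) Also, the ordering in \eqref{od:pair-return} is not something to be ``arranged''. In your labels you need the integer $m$ nearest $n\beta$ to exceed $n$. This holds because $n\beta-n\ge q-1$ while $|n\beta-m|\le C_0v/(a\rho)<q-1$. Your ``$m<n\beta$'' is neither true in general nor what is needed.

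\emph{The drift step is only asserted.} The step you flag as the main obstacle, upgrading $\|n\beta\|_{\R/\Z}\le Cv$ to $a\rho\,\|n\beta\|_{\R/\Z}\le C_0v$, rests on ``the deviation accumulates linearly''. It needs an actual count, which is the heart of the paper's proof. Let $P\approx a\rho$ consecutive integers $k$ have at least $\epsilon'P$ visits of $k\beta\bmod1$ to a fixed arc of length $v'=O_\epsilon(v)$. Let $n\le2/\epsilon'$ be the difference of two successive visiting indices, and put $\delta=\|n\beta\|_{\R/\Z}\le v'$, with $\delta>0$ (otherwise there is nothing to prove). Split the $k$ into their $n$ residue classes modulo $n$. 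Within a class the points move by $\pm\delta$ per step, so a class of $L$ points meets the arc at most $(L\delta+2)(v'/\delta+1)$ times. Summing over classes gives $\epsilon'P\le 2Pv'+2n(v'/\delta+1)$. Multiplying by $\delta$ yields $(\epsilon'-2v')P\delta\le 4nv'$, hence $P\delta\le C_\epsilon v$ once $v$ is small. With this count and the bounded-ratio reduction inserted, your argument proves the lemma.
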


\begin{proof}
There are at most $a\rho+2$ slow periods. In each of them the
slow hit is an interval of length $v/a$, which meets at most
$(b/a)v+2$ fast hit intervals, each of length $v/b$. Hence
\[
 \mathbb P_Q(ax\in I,\,bx\in I)
 \le\frac{a\rho+2}{\rho}\left(\frac{bv}{a}+2\right)\frac vb
 \ll v^2+\frac{av}{b},
\]
where $a\rho\ge4$ is used in the last inequality. This proves
\eqref{od:pair-coarse}.

For the second assertion, choose a unit interval containing a real
lift of $I$, and use its integer translates as period cells. By
\eqref{od:pair-coarse}, we may suppose $b/a\le R$ for a constant
$R=R(\epsilon)$. Put $r=b/a$ and $T=a\rho$.
Discarding the at most two partial slow periods costs at most $2v/T$.
On a full slow period write $ax=k+y$, with $y$ in the chosen lift of
$I$. If the pair probability exceeds $\epsilon v$, then, for some
$y$ in that lift, at least the fraction $\epsilon'=\epsilon/4$ of
the $T+O(1)$ consecutive integers $k$ satisfy
$rk+ry\bmod1\in I$.

We use the following estimate, valid for every $r\in\R$.
Let $k$ range over $P$ consecutive integers. If at least $\epsilon'P$
of the points $rk\bmod1$ lie in an arc of length $v$, then, for
sufficiently large $P$ and small $v$,
\begin{equation}\label{od:rotation-count}
 P\|mr\|_{\R/\Z}\le Cv\quad\text{for some }1\le m\le2/\epsilon'.
\end{equation}
Some two successive hitting indices have difference
$m\le2/\epsilon'$. Thus $\delta=\|mr\|_{\R/\Z}\le v$.
If $\delta=0$ there is nothing to prove. Otherwise split the indices
into their $m$ residue classes. In each class, the points advance by
$\delta$ or $-\delta$; a progression of $L$ such points meets the arc
at most $(L\delta+2)(v/\delta+1)$ times. Summing gives
\[
 \epsilon'P\le2Pv+2m(v/\delta+1).
\]
For $v\le\epsilon'/8$ and $P\ge16/(\epsilon')^2$, multiplying
by $\delta$ gives
\[
 (\epsilon'-2v)P\delta\le2m(v+\delta)\le4mv,
\]
so $P\delta\le C_{\epsilon'}v$.
Apply the observation and let $n$ be the nearest integer to $mr$.
Then $m,n$ are bounded and $T|mr-n|\le Cv$. Since
$m(r-1)\ge q-1$ and $|mr-n|\ll_{\epsilon'}v/P$, we have $n>m$
for sufficiently small $v/P$. This proves \eqref{od:pair-return}.

Finally, $zx_0\bmod1$ differs from $Dt_I$ by at most
$(m+n)v/2$, and $|z(x-x_0)|\le C_0v$ for all $x\in Q$.
Choosing $C_1$ sufficiently large proves \eqref{od:pair-location}.
\end{proof}

We apply the preceding estimates to blocks of $L$ consecutive
indices, writing $a_{\min}$ for the smallest frequency in a block.
For each $\tau>0$, choose $M$ sufficiently large in terms of $q,\tau$
and then $\epsilon>0$ sufficiently small in terms of $M,\tau$.
Partition each block into consecutive groups of $M$ indices,
discarding fewer than $M$ indices at the end. A group is called
\emph{bad} (relative to $Q$ and $I$) if some pair in it has
intersection probability greater than $\epsilon v$; write $B_Q(I)$
for the number of bad groups.
Take $H$ from \cref{lem:od-pairs} for this choice of $\epsilon$.
For each group, list all choices of $i<j$ in that group and
$1\le m<n\le H$. A choice $r=(i,j,m,n)$ has two associated numbers
\[
 z_r=na_i-ma_j,\qquad D_r=n-m.
\]
Retain all choices, including those with equal residuals $z_r$.
The list has size at most $CN$ over all active blocks. Write
$\mathscr R_b$ for the choices in block $b$, and define
\begin{equation}\label{od:beta-defined}
 \beta_b(x,I)=\sum_{r\in\mathscr R_b}
       \1_{E_{D_r}(I)}(z_rx\bmod1),\qquad
 \beta(x,I)=\sum_b\beta_b(x,I).
\end{equation}

\begin{lemma}\label{lem:od-block-hitting}
If $Lv\le1$ and $a_{\min}\rho$ is sufficiently large, there is a
constant $\kappa=\kappa(q)>0$ such that
\begin{equation}\label{od:positive-hit}
 \mathbb P_Q(\text{at least one hit of }I)\ge\kappa Lv.
\end{equation}
If also $Lv\to0$ and $L\to\infty$, then
\begin{equation}\label{od:generic-hit}
 \mathbb P_Q(\text{at least one hit of }I)
 \ge(1-\tau)Lv-Mv\,B_Q(I).
\end{equation}
In particular, for the active blocks and local averages defined above,
\begin{equation}\label{od:hit-rates}
 \kappa Lv\le h_b(x,I),\qquad
 h_b(x,I)\ge(1-\tau)Lv-Mv\beta_b(x,I).
\end{equation}
\end{lemma}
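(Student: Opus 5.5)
Both hitting lower bounds come from Bonferroni (inclusion--exclusion) applied on the interval $Q$, and \eqref{od:hit-rates} then follows by specialising to $Q=[x-1/T_b,x+1/T_b]$. For this $Q$ we have $\rho=2/T_b$ and $\mathbb P_Q(\text{at least one hit of }I)=h_b(x,I)$. Also $a_{\min}\rho\ge 2N^K$, so every dilate in block $b$ has at least $N^K$ periods across $Q$, and all single and pair estimates preceding the lemma apply.

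\emph{Positive hitting \eqref{od:positive-hit}.} Take a subfamily $S$ of the block whose consecutive ratios are at least some large $R$. This means using every $k$-th index with $q^k\ge R$, so $|S|\ge L/k-1$. Then truncate $S$ to have size $\asymp\min(L,1/v)\cdot$const. Bonferroni gives
\[
 \mathbb P_Q(\cup)\ge\sum_{i\in S}\mathbb P_Q(a_ix\in I)-\sum_{i<j\in S}\mathbb P_Q(a_ix\in I,a_jx\in I).
\]
Each single term is $v(1+o(1))$. By \eqref{od:pair-coarse}, each pair term is at most $C(v^2+va_i/a_j)\le C(v^2+vR^{-(j-i)})$. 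The pair sum is therefore at most $C|S|^2v^2+C|S|v/(R-1)$. Since $Lv\le1$, choosing $R$ large and $|S|=\lfloor c\min(L,1/v)/k\rfloor$ with small $c$ makes the pair sum at most half of the single sum. This gives $\kappa Lv$ with $\kappa=\kappa(q)$. For small $L$ one should check that $|S|\ge1$; in that range a single index already gives $\ge v/2\ge Lv/(2L)$, which is fine because $L=N^\xi$ is large.

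\emph{Generic bound \eqref{od:generic-hit}.} I would apply Bonferroni within each group of $M$ indices but not across groups. The union over the block is at least the union over a set $G$ of good groups. Using all $L$ indices directly would produce cross-group pair terms of size $\asymp L^2v^2=o(Lv)$ from \eqref{od:pair-coarse}, together with the $va_i/a_j$ terms. The latter, summed over $j>i$ in different groups, give $\sum_i v\sum_{j>i}q^{-(j-i)}=O(Lv/(q-1))$, which is not small. To fix this, I restrict to each good group, and across groups I use the pairs between different groups. The cross terms $va_i/a_j$ then only involve pairs in adjacent groups near the boundary, which contribute $O(v)$ per group boundary, i.e.\ $O(Lv/M)$ in total. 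Within a good group, every pair has probability at most $\epsilon v$, so the within-group pair sum is at most $M^2\epsilon v$ per group. Summing over good groups and subtracting the bad ones gives
\[
 \mathbb P_Q\ge Lv(1-o(1))-\frac LM M^2\epsilon v-\frac{CLv}{M}-C L^2v^2-M vB_Q(I)-Mv.
\]
Choosing $M$ large in terms of $\tau$ and then $\epsilon\le\tau/(4M)$, and using $Lv\to0$, $L\to\infty$, yields $(1-\tau)Lv-MvB_Q(I)$. The main obstacle is arranging these cross-group $a_i/a_j$ contributions carefully. If adjacency is not enough, I would fall back on inserting gaps of length $m_0$ between groups, with $q^{m_0}$ large, and absorbing the $O(m_0/M)$ loss into $\tau$.

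\emph{Passage to $\beta_b$.} It remains to show $B_Q(I)\le\beta_b(x,I)$ for $Q$ centred at $x$. If a group is bad, some pair $(i,j)$ in it has intersection probability exceeding $\epsilon v$. By \cref{lem:od-pairs}, there are $m<n\le H$ with \eqref{od:pair-return}. Since the probability is positive, both dilates hit $I$ at some $x_0\in Q$, and \eqref{od:pair-location} then gives $z_rx\bmod1\in E_{D_r}(I)$ for the choice $r=(i,j,m,n)\in\mathscr R_b$, in particular at $x$. Distinct bad groups contribute distinct choices $r$, so $B_Q(I)\le\beta_b(x,I)$, which proves \eqref{od:hit-rates}.
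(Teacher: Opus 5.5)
Your proof is correct. For \eqref{od:generic-hit} and \eqref{od:hit-rates} it is essentially the paper's argument. You use Bonferroni over the indices of the good groups, with at most $\binom M2\epsilon v$ of within-group pair mass per good group. The cross-group ratios are summed boundary by boundary to give $O_q(L/M)$, and the bad and final groups cost $MvB_Q(I)+Mv$. You then specialize to $Q=[x-1/T_b,x+1/T_b]$ and use \eqref{od:pair-location} to get $B_Q(I)\le\beta_b(x,I)$.

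One preliminary remark in your proposal is wrong, though harmless. The sum $\sum_i v\sum_{j>i}q^{-(j-i)}=O(Lv/(q-1))$ runs over all pairs, not over pairs in different groups. Once $i,j$ lie in different groups, the per-boundary double series $\sum_{u\ge0,\,w\ge1}q^{-(u+w)}$ already gives $O(Lv/M)$, including non-adjacent groups. So the gap-insertion fallback is unnecessary.

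The genuinely different part is \eqref{od:positive-hit}. The paper applies $(\mathbb E_QY)^2\le\mathbb P_Q(Y>0)\,\mathbb E_QY^2$ to the hit count $Y$ of the whole block. There \eqref{od:pair-coarse} and $\sum_{i<j}a_i/a_j\le C_qL$ give $\mathbb E_QY^2\le C_q(Lv+L^2v^2)\le 2C_qLv$. The pair sum only needs to be comparable to the first moment, not smaller than it. So no thinning is required, and $\kappa\asymp1/C_q$ follows in one line.

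Your Bonferroni route must make the pair sum a small fraction of the single sum. That forces you to pass to a subsequence with ratios at least $R$, cut it to size about $cL/k$, and handle bounded $L$ separately. This costs a constant factor in $\kappa$. The loss is immaterial, since only $\kappa>0$ is used later, in \cref{lem:od-uniform-avoidance} and in the residual cases of the upper bound. What your version buys is uniformity: both lower bounds come from the same two-term inclusion--exclusion, and only first and second Bonferroni terms are ever used.
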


\begin{proof}
Let $Y$ count the hits in a block. The marginal estimate gives
$\mathbb E_QY\ge Lv/2$, and \eqref{od:pair-coarse} gives
\[
 \mathbb E_QY^2\le C_q(Lv+L^2v^2),
 \qquad \sum_{i<j}a_i/a_j\le L\sum_{k\ge1}q^{-k}.
\]
Now use $(\mathbb E_QY)^2\le\mathbb P_Q(Y>0)\mathbb E_QY^2$
to obtain \eqref{od:positive-hit}.

For a group that is not bad, the first two terms of
inclusion--exclusion give
\[
 \begin{aligned}
 \mathbb P_Q(\text{group hits }I)
 \ge Mv\bigl(1-O((a_{\min}\rho)^{-1})\bigr)
       -\binom M2\epsilon v\ge(1-\tau/4)Mv,
 \end{aligned}
\]
by first taking $a_{\min}\rho$ sufficiently large and then
$\epsilon$ sufficiently small. For pairs in different groups, sum the
frequency ratios across the group boundaries:
\[
 \sum_{\substack{i<j\\i,j\text{ in different groups}}}\frac{a_i}{a_j}
 \le\frac LM\sum_{u\ge0}\sum_{w\ge1}q^{-(u+w)}
 \ll_q\frac LM.
\]
Thus the cross-group pair intersections contribute at most
$C L^2v^2+C_qLv/M$. By inclusion--exclusion over the good groups,
the bad groups and the discarded final group cost at most
$Mv B_Q(I)+Mv$. Since $Lv\to0$ and $L\to\infty$, choose $M$
sufficiently large, followed by $\epsilon$ sufficiently small, to
absorb the remaining errors into $\tau Lv$. This proves
\eqref{od:generic-hit}.

By \eqref{od:pair-location}, every bad group contributes a summand
equal to one throughout $Q$. Applying the preceding estimates on the
averaging interval gives \eqref{od:hit-rates}. Since
$a_{\min}/T_b\ge N^K$, every averaging interval satisfies the
required lower bound on $a_{\min}\rho$.
\end{proof}

\subsection{Upper bound}
For a set of retained blocks $\mathcal B\subseteq\{1,\ldots,B\}$,
put
\[
 W_b^{\mathcal B}(x,I)
 =\prod_{\substack{j\in\mathcal B\\j\le b}}
             (1-H_j(x,I))(1+h_j(x,I)).
\]
Thus $W_b^{\{1,\ldots,B\}}=W_b$ as defined above.

\begin{lemma}\label{lem:od-uniform-avoidance}
Let $I$ be an arc of length $v\le s<2\log N/N$.
Use the separated-block construction above, with
$L=\lfloor N^\xi\rfloor$ for fixed $0<\xi<1$.
Let $\mathcal B\subseteq\{1,\ldots,B\}$ be obtained by omitting whole
active blocks, and suppose $|\mathcal B|L=N-o(N)$. Then, for a constant
$\kappa>0$ depending only on $q$,
\begin{equation}\label{od:uniform-avoidance}
 \mathbb P\{H_b(\cdot,I)=0\text{ for all }b\in\mathcal B\}
 \le\exp(-(\kappa-o(1))Nv)+O(N^{-9}).
\end{equation}
The bound is uniform in the position of $I$. It also holds uniformly
for $q_0$-lacunary sequences whose first frequency is at least one,
using the same block construction and the corresponding averaging
scales, where $q_0=(q+1)/2$.
\end{lemma}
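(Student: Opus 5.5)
The plan is to run the compensated-survival-weight argument with the weights $W_b^{\mathcal B}(x,I)$, taking only the crude lower bound $h_j\ge\kappa Lv$ from \eqref{od:hit-rates}. The aim is to show by induction on $b$ that $\mathbb E[W_b^{\mathcal B}(\cdot,I)]\le 1+O(bN^{-10})$. The factor $(1+h_j)$ compensates, up to second order, the survival probability of block $j$. On the event that all retained blocks avoid $I$ we have $H_j=0$, so there
\[
 W_B^{\mathcal B}=\prod_{j\in\mathcal B}(1+h_j)\ge\exp\Bigl(\sum_{j\in\mathcal B}(h_j-h_j^2)\Bigr)\ge\exp\bigl(\kappa|\mathcal B|Lv-|\mathcal B|p^2\bigr).
\]
Since $|\mathcal B|L=N-o(N)$ and $Bp^2=o(1)$, Markov's inequality then gives
\[
 \mathbb P\{H_b=0\ \forall b\in\mathcal B\}\le e^{-(\kappa-o(1))Nv}\bigl(1+O(N^{-9})\bigr),
\]
and since $e^{-(\kappa-o(1))Nv}\le1$ this is at most $\exp(-(\kappa-o(1))Nv)+O(N^{-9})$, as required.

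For the induction step, fix $b\in\mathcal B$ and write $W_b^{\mathcal B}=W_{b-1}^{\mathcal B}(1-H_b)(1+h_b)$. Expanding gives
\[
 \mathbb E W_b^{\mathcal B}=\mathbb E W_{b-1}^{\mathcal B}+\mathbb E[W_{b-1}^{\mathcal B}h_b]-\mathbb E[W_{b-1}^{\mathcal B}H_b]-\mathbb E[W_{b-1}^{\mathcal B}H_bh_b].
\]
The weight $F=W_{b-1}^{\mathcal B}$ is admissible in \cref{lem:od-transfer}: it has a single arc and involves only blocks $j<b$ (omitted blocks are allowed). Its $a_*$ is at most $a_{(b-1)(L+g)}$, so $T_b\ge N^Ka_*$ and \eqref{od:transfer} gives $\mathbb E[FH_b]=\mathbb E[Fh_b]+O(N^{-10})$. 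The first-order terms therefore cancel. The last term is nonnegative and is simply dropped. Hence $\mathbb E W_b^{\mathcal B}\le \mathbb E W_{b-1}^{\mathcal B}+O(N^{-10})$. Summing over at most $N$ blocks yields $\mathbb E W_B^{\mathcal B}\le 1+O(N^{-9})$. This is consistent with the a priori bound $W\le N^3$ used in the transfer lemma.

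It remains to check that the lower bound $h_b(x,I)\ge\kappa Lv$ from \eqref{od:hit-rates} holds pointwise for $x\in Q_0$. This requires $Lv\le1$, which holds because $L=N^{\xi}$, $\xi<1$, and $v<2\log N/N$. It also requires the averaging interval of length $2/T_b$ to satisfy the largeness condition on $a_{\min}\rho$, which follows from $a_{\min}\ge N^KT_b$. For the $q_0$-lacunary variant, the same quantities $g$, $T_b$ and $\kappa(q_0)$ serve, since the block construction only used $q_0$-lacunarity and $a_1\ge1$. Uniformity in the position of $I$ holds because every estimate used is uniform in arcs.

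I expect the main obstacle to be bookkeeping rather than any new idea. One must make sure the $O(N^{-10})$ transfer errors are not amplified: the errors add up but are never multiplied, because the induction is linear in expectations. One must also verify that omitting blocks does not break the scale separation $T_b\ge N^Ka_*$. If $h_j^2$ were not small we would need $1+h\ge e^{h-h^2}$ with $h\le p=o(1)$, which is fine here.
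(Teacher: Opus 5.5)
Your proposal is correct and follows essentially the same route as the paper: the telescoping bound $\mathbb E W_b^{\mathcal B}\le\mathbb E W_{b-1}^{\mathcal B}+O(N^{-10})$ obtained from \eqref{od:transfer} after dropping the nonnegative $H_bh_b$ term, the estimate $\sum_{b\in\mathcal B}h_b^2=o(1)$, the pointwise lower bound $h_b\ge\kappa Lv$ from \eqref{od:hit-rates}, and Markov's inequality. Your explicit check that $T_b\ge N^Ka_*$ still holds when blocks are omitted, and your multiplicative error $(1+O(N^{-9}))e^{-(\kappa-o(1))Nv}$ in place of the additive $O(N^{-9})$, are harmless refinements of that same argument.
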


\begin{proof}
For $b\notin\mathcal B$, $W_b^{\mathcal B}=W_{b-1}^{\mathcal B}$. For $b\in\mathcal B$, since
$(1-H_b)(1+h_b)=1+(h_b-H_b)-H_bh_b$,
the averaging estimate \eqref{od:transfer} gives
\[
 \begin{aligned}
 \mathbb EW_b^{\mathcal B}-\mathbb EW_{b-1}^{\mathcal B}
 =\mathbb E[W_{b-1}^{\mathcal B}(h_b-H_b)]
   -\mathbb E[W_{b-1}^{\mathcal B}H_bh_b]\le O(N^{-10}).
 \end{aligned}
\]
Hence $\mathbb EW_B^{\mathcal B}\le1+O(BN^{-10})=1+O(N^{-9})$.
Moreover, $h_b\le L(v+N^{-10})$ implies
\[
 \sum_{b\in\mathcal B} h_b^2
 \le |\mathcal B|L^2(v+N^{-10})^2
 \le NL(v+N^{-10})^2=o(1).
\]
If $I$ is avoided by all retained blocks, then
\[
 W_B^{\mathcal B}(x,I)\ge\exp\left(\sum_{b\in\mathcal B}h_b(x,I)-o(1)\right).
\]
The lower bound in \eqref{od:hit-rates} gives
$\sum_{b\in\mathcal B}h_b(x,I)\ge\kappa|\mathcal B|Lv
=(\kappa-o(1))Nv$.
This proves \eqref{od:uniform-avoidance}. The same argument applies
to the stated $q_0$-lacunary sequences after decreasing $\kappa$
if necessary.
\end{proof}

Fix $1<c<2$, put $s=c\log N/N$, and choose $0<\varepsilon<1$ such
that $(1-\varepsilon)c>1$. Put $v=(1-\varepsilon)s$.
Choose $\tau>0$ with $(1-\tau)(1-\varepsilon)c>1$, and take
$M,H$ as in the preceding subsection. Then choose $\delta>0$ so that
$(1-\tau-M\delta)(1-\varepsilon)c>1$.
Choose equally spaced centers on $\T$, with spacing at most
$\varepsilon s$, and let $\mathscr I$ be the arcs of length $v$
at these centers. There are $O(s^{-1})$ such arcs, their center
spacing is bounded below by a fixed multiple of $s$, and every
arc of length $s$ contains one of them.
Use $L=\lfloor\sqrt N\rfloor$ and the separated active blocks
defined above, retaining all full active blocks. Thus the relevant
weight is $W_b(x,I)=W_b^{\{1,\ldots,B\}}(x,I)$.
By the proof of \cref{lem:od-uniform-avoidance}, on avoidance of $I$,
$W_B(x,I)\ge\exp(\sum_bh_b(x,I)-o(1))$ and
$\mathbb EW_B\le1+O(N^{-9})$.

If $\beta(x,I)\le\delta N$, \eqref{od:hit-rates} gives
\[
 \begin{aligned}
 \sum_{b=1}^Bh_b(x,I)
 \ge(1-\tau)BLv-Mv\beta(x,I)\ge(1-\tau-M\delta-o(1))Nv.
 \end{aligned}
\]
Consequently, on avoidance of $I$,
$W_B(x,I)\ge\exp((1-\tau-M\delta-o(1))Nv)$.
Markov's inequality and summation over the $O(s^{-1})$ arcs yield
\begin{equation}\label{od:ordinary-upper}
 \mathbb P\{\exists I\in\mathscr I:\ I\text{ avoided},\
                              \beta(x,I)\le\delta N\}
 \le N^{1-(1-\tau-M\delta)(1-\varepsilon)c+o(1)}.
\end{equation}

It remains to treat the arcs with $\beta(x,I)>\delta N$.
Let $\mathcal E_I=\{x\in Q_0:a_nx\bmod1\notin I\text{ for }1\le n\le N\}$, and let
$\mathscr R$ be the full list of choices defined above. Pointwise,
\begin{equation}\label{od:incidence-sum}
 \1_{\{\exists I\in\mathscr I:\mathcal E_I,\ \beta(x,I)>\delta N\}}
 \le \frac1{\delta N}\sum_{r\in\mathscr R}\sum_{I\in\mathscr I}
       \1_{\mathcal E_I}\1_{E_{D_r}(I)}(z_rx\bmod1).
\end{equation}
We prove that each inner sum has expectation $O(N^{-\gamma})$
for some $\gamma>0$, uniformly in $r$. Since $|\mathscr R|=O(N)$,
this will finish the argument. Write $z=z_r$ and $D=D_r$.

First suppose $|z|>N^{2K}$. Remove every active block whose
frequency range intersects $[|z|N^{-6K},|z|N^{6K}]$, and let
$\mathcal B_z\subset\{1,\ldots,B\}$ be the retained block indices.
This deletes $O(\log N)+2L=o(N)$ indices: only the first and last
intersecting blocks can have frequencies outside that interval.
All retained blocks lie below or above it.
For the first retained block above the interval, if one exists, replace
its averaging scale by
\[
 \widetilde T_b=N^K\max\{1,|z|,a_-\},
\]
where $a_-$ is the largest retained frequency below the interval,
or $1$ if there is none. Keep the original scales for the other
retained blocks. Let $\widetilde h_b$ be the resulting local averages,
and define
\[
 \widetilde W_z(x,I)
 =\prod_{b\in\mathcal B_z}(1-H_b(x,I))(1+\widetilde h_b(x,I)).
\]
Insert $J(x)=\1_{E_D(I)}(zx\bmod1)$ between the retained blocks
below and above the interval. By \eqref{od:single-insertion},
\[
 \mathbb E[FJ]=|E_D(I)|\mathbb E F+O(N^{-10}),
\]
where $F$ is the preceding product, or $1$ if there is none.
The frequency separation gives
$\min_{n\in\mathcal I_b}a_n\ge N^K\widetilde T_b$ for the first
retained block above the interval. Thus the proof of
\eqref{od:transfer} applies to the modified averages as well;
subsequent retained blocks already have the required separation.
Iterating before and after the insertion yields
\[
 \mathbb E[J\widetilde W_z]\le Cv+O(N^{-9}).
\]
Moreover $|\mathcal B_z|L=N-o(N)$ and each retained block satisfies
$\widetilde h_b(x,I)\ge\kappa Lv$ by \eqref{od:positive-hit}.
On $\mathcal E_I$, the sum of the squared averages is $o(1)$, so
\[
 \widetilde W_z(x,I)\ge\exp((\kappa-o(1))Nv).
\]
Consequently,
\[
 \mathbb P(\mathcal E_I\cap\{zx\bmod1\in E_D(I)\})
 \le Cv\exp(-(\kappa-o(1))Nv)+O(N^{-8}).
\]
Summing over $O(v^{-1})$ arcs gives the required polynomial decrease.

Now suppose $|z|\le N^{2K}$, including $z=0$.
The condition $zx\bmod1\in E_D(I)$ implies that the center of $I$
lies within $C_1v$ of one of the $D$ points
\[
 p_j(x)=(zx+j)/D\bmod1,\qquad 0\le j<D.
\]
For each such point only $O(1)$ grid arcs can satisfy this condition,
because their center spacing is comparable to $v$.
Fix an integer $J_0>4C_1+4$, and let
\[
 J_\ell=(\ell v/4-v/4,\ell v/4+v/4)\pmod1,
                 \qquad |\ell|\le J_0.
\]
Every arc $I$ whose center is within $C_1v$ of $p_j(x)$ contains
$p_j(x)+J_\ell$ for at least one of these finitely many $\ell$.
Thus
\[
 \sum_{I\in\mathscr I}\1_{\mathcal E_I}
                       \1_{E_D(I)}(zx\bmod1)
 \le C\sum_{j=0}^{D-1}\sum_{|\ell|\le J_0}
 \1_{\{((a_n-z/D)x-j/D)\bmod1\notin J_\ell\ (n\le N)\}}.
\]
Omit the initial indices until $a_n\ge N^{4K}+2|z|$; this costs
$O(\log N)$ indices. The remaining positive frequencies
$\widetilde a_n=a_n-z/D$ satisfy
$\widetilde a_{n+1}/\widetilde a_n\ge(q+1)/2$.
For $z\ge0$ the ratio is at least $q$; for $z<0$, set
$u=|z|/D$ and use
$(qa_n+u)/(a_n+u)\ge(q+1)/2$.
The fixed phase $-j/D$ is absorbed into the arc.
Apply \eqref{od:uniform-avoidance} to these frequencies and arcs of
length $v/2$. Each summand has probability
$\exp(-(\kappa-o(1))Nv/2)+O(N^{-9})$.
There are only finitely many summands, so the required uniform
$O(N^{-\gamma})$ bound follows also in this case.

The chosen $\tau,\delta$ make the exponent in
\eqref{od:ordinary-upper} negative. Combining it with
\eqref{od:incidence-sum} proves, for some $\alpha>0$,
\[
 \mathbb P\{G_N>c\log N/N\}=O(N^{-\alpha}).
\]

\subsection{Lower bound}
Fix $0<c<1$ and put
\[
 s=\frac{c\log N}{N},\qquad
 \xi=\frac{1-c}{2},\qquad L=\lfloor N^\xi\rfloor.
\]
Use the blocks, indicators $H_b(x,t)$, and averages $h_b(x,t)$ defined
above, with this choice of $L$. The one-point estimate gives
\[
 0\le h_b(x,t)\le p:=L(s+N^{-10}).
\]
Set $\theta=1-p$. For this choice of $L$, let $W_b$ denote the
full-block weight defined above, and put
\[
 U_b(x,t)=\theta^b W_b(x,I_t(s)),\qquad
 Z_b(x)=\int_\T U_b(x,t)\,dt.
\]
Equivalently, $U_0=1$ and
\[
 U_b=\theta U_{b-1}(1-H_b)(1+h_b).
\]
Since $\theta(1+h_b)\le(1-p)(1+p)\le1$, we have
$0\le U_b\le U_{b-1}\le1$. Thus $U_b(x,t)$ vanishes whenever
one of the first $b$ blocks hits $I_t(s)$, and $Z_B>0$ implies that
some arc of length $s$ avoids all active blocks.

We first estimate $\mathbb E Z_B$. The identity
\[
 (1-H_b)(1+h_b)=1+(h_b-H_b)-H_bh_b
\]
and the averaging estimate give
\[
 \bigl|\mathbb E[U_{b-1}(h_b-H_b)]\bigr|\le N^{-10},
 \qquad
 0\le\mathbb E[U_{b-1}H_bh_b]
 \le p\,\mathbb E[U_{b-1}H_b]
 \le p^2\mathbb E U_{b-1}+N^{-10}.
\]
These bounds are uniform in $t$ and imply
\[
 \theta(1-p^2)\,\mathbb EU_{b-1}-O(N^{-10})
 \le\mathbb EU_b
 \le\theta\,\mathbb EU_{b-1}+O(N^{-10}).
\]
Iterating gives
\begin{equation}\label{od:lower-first}
 \mathbb E U_b(\cdot,t)
 =\theta^b\bigl(1+O(Bp^2)\bigr)+O(N^{-8})
 \qquad(0\le b\le B).
\end{equation}
The number of indices outside the full blocks is
\[
 m_N=N-BL=O(Ng/L+L+g).
\]
With $g=O(\log N)$ and $L=N^\xi+O(1)$,
\[
 m_Ns\ll\frac{(\log N)^2}{L}+\frac{L\log N}{N}=o(1),
 \qquad
 Bp^2\ll\frac{L(\log N)^2}{N}=o(1).
\]
Since $Bp=Ns-m_Ns+O(N^{-9})$, we obtain
\begin{equation}\label{od:lower-mean}
 \mathbb E Z_B=\theta^B(1+o(1)),\qquad
 \theta^B=\exp\{-Ns+O(m_Ns+Bp^2)\}=N^{-c}(1+o(1)).
\end{equation}

We next estimate $\mathbb E Z_B^2$. For $t,u\in\T$, abbreviate $H=H_b(x,t)$, $H'=H_b(x,u)$,
$h=h_b(x,t)$, $h'=h_b(x,u)$ and
$F=U_{b-1}(x,t)U_{b-1}(x,u)$.
Since $0\le h,h'\le p$, expanding the four factors gives
\[
 (1-H)(1-H')(1+h)(1+h')
 \le 1+(h-H)+(h'-H')+p^2+(1+p)^2HH'.
\]
The averaging estimate cancels the two linear differences after
multiplication by $F$ and expectation. Thus, writing $M_b=\mathbb E Z_b^2$,
\begin{equation}\label{od:lower-second-step}
 M_b\le\theta^2(1+p^2)M_{b-1}
       +\theta^2(1+p)^2\mathbb E\int_{\T^2}FHH'\,dt\,du
       +O(N^{-10}).
\end{equation}
For any $L$ points, the set of $u$ for which $I_u(s)$ contains at least
one of them has length at most $Ls$. Since $U_{b-1}(x,u)\le1$, this gives
\begin{align*}
 \mathbb E\int_{\T^2}FHH'\,dt\,du
 &\le Ls\,\mathbb E\int_\T U_{b-1}(x,t)H_b(x,t)\,dt\\
 &\le Ls\left(p\,\mathbb E Z_{b-1}+N^{-10}\right)
 \le Cp^2\theta^{b-1}+O(N^{-8}),
\end{align*}
where the second inequality uses the averaging estimate and $h_b\le p$,
and the third uses \eqref{od:lower-first}.
Divide \eqref{od:lower-second-step} by $\theta^{2b}$ and use the
preceding estimate of the cross term. We obtain
\[
 \frac{M_b}{\theta^{2b}}
 \le(1+p^2)\frac{M_{b-1}}{\theta^{2(b-1)}}
    +Cp^2\theta^{1-b}+O(N^{-8}\theta^{-2b}).
\]
Since $Bp^2=o(1)$,
\[
 (1+p^2)^B=1+O(Bp^2),\qquad
 \sum_{b=1}^B\theta^{1-b}
 =\frac{\theta(\theta^{-B}-1)}p\ll\frac{\theta^{-B}}p.
\]
Also, $B\le N$ and $\theta^{-B}=N^{c+o(1)}$; hence the accumulated
$O(N^{-8}\theta^{-2b})$ errors are $O(N^{-5})$. Iteration gives
\[
 \frac{M_B}{\theta^{2B}}
 \le 1+O(Bp^2+p\theta^{-B})+O(N^{-5}).
\]
By \eqref{od:lower-first}--\eqref{od:lower-mean},
$\mathbb EZ_B=\theta^B(1+O(Bp^2)+O(N^{-8}\theta^{-B}))$.
Subtracting $(\mathbb EZ_B)^2$ from $M_B$ and dividing by
$(\mathbb EZ_B)^2$ therefore gives
\[
 \frac{\operatorname{Var}Z_B}{(\mathbb EZ_B)^2}
 \ll Bp^2+p\theta^{-B}+N^{-5}.
\]
By \eqref{od:lower-mean}, $\theta^{-B}=N^c(1+o(1))$, and hence
\[
 \begin{aligned}
 Bp^2&\ll NLs^2\ll N^{\xi-1}(\log N)^2,\\
 p\theta^{-B}&\ll LsN^c
    \ll N^{\xi+c-1}\log N=N^{-\xi}\log N.
 \end{aligned}
\]
Since $\xi=(1-c)/2$, both errors are
$O(N^{-\xi}(\log N)^2)$, and we conclude that
\begin{equation}\label{od:variance}
 \frac{\operatorname{Var}Z_B}{(\mathbb EZ_B)^2}
 =O(N^{-\xi}(\log N)^2).
\end{equation}

It remains to restore the omitted indices. We claim that, for every omitted index $n$,
\begin{equation}\label{od:one-omitted}
 \mathbb E\int_\T U_B(x,t)\,
            \1_{I_t(s)}(a_nx\bmod1)\,dt
 \le Cs\theta^B+O(N^{-8}).
\end{equation}
Put
$J(x,t)=\1_{I_t(s)}(a_nx\bmod1)$.
If $n$ is between blocks $b$ and $b+1$, remove those two block factors
from $U_B$, which increases it. The remaining frequencies on either
side of $a_n$ are separated from it by at least $L$ indices. Since
$q^L$ exceeds every fixed power of $N$, the single-insertion estimate
\eqref{od:single-insertion} gives
\[
 \mathbb E[U_{b-1}(\cdot,t)J(\cdot,t)]
 \le s\,\mathbb E U_{b-1}(\cdot,t)+O(N^{-10}).
\]
For each retained later block $j$, let $F$ be the product already formed,
including $J$. Its variation obeys the same bound because the inserted
indicator has at most $C(1+a_n)$ jumps on a fixed bounded interval.
The separation from the remaining blocks permits the use of
\eqref{od:transfer}. The first-moment identity, with its last term
nonpositive,
therefore gives
\[
 \mathbb E[\theta F(1-H_j)(1+h_j)]
 \le\theta\mathbb E F+O(N^{-10}).
\]
Iterating over the $B-b-1$ retained later blocks proves a bound
$Cs\theta^{B-2}+O(N^{-8})$.
If $n$ is after the last full block, remove just that block and use
\eqref{od:single-insertion}; the bound is
$Cs\theta^{B-1}+O(N^{-8})$.
If $n$ is before the first full block, remove that block and start the
later-block recursion from $J$. Its initial expectation after averaging
in $t$ is exactly $s$, because
$\int_\T\1_{I_t(s)}(y)\,dt=s$ for every $y\in\T$.
The first removed block supplies the required separation, and again the
bound is $Cs\theta^{B-1}+O(N^{-8})$.
Since $\theta^{-2}$ is bounded, all three cases prove
\eqref{od:one-omitted}.

Let $\mathcal O_N=\{1,\ldots,N\}\setminus\bigcup_{b=1}^B\mathcal I_b$
be the omitted indices, and set
\[
 S(x)=\int_\T U_B(x,t)
   \1_{\{a_nx\bmod1\in I_t(s)\text{ for some }n\in\mathcal O_N\}}\,dt.
\]
By the union bound, summing \eqref{od:one-omitted} over the $m_N$
omitted indices gives
\[
 \mathbb E S\le Cm_Ns\theta^B+O(N^{-7}).
\]
If $Z_B>S$, some interval of length $s$ avoids every one of the first
$N$ points. Chebyshev's inequality for $Z_B$ and Markov's inequality
for $S$, with thresholds $\frac12\mathbb E Z_B$ and
$\frac14\mathbb E Z_B$, respectively, show that
\[
 \mathbb P\{G_N<s\}
 \le \frac{4\operatorname{Var}Z_B}{(\mathbb E Z_B)^2}
       +\frac{4\mathbb E S}{\mathbb E Z_B}
 =O(N^{-\xi}(\log N)^2).
\]
This is the required polynomially small lower-tail probability.

\subsection{Almost-sure conclusion}
Fix $r>1$ and $0<c<1<c'<2$, and put $N_j=\lfloor r^j\rfloor$.
The two failure probabilities are summable along this sequence.
By Borel--Cantelli, for Lebesgue-almost every $x\in Q_0$ and all
sufficiently large $j$,
\[
 c\frac{\log N_j}{N_j}\le G_{N_j}(x)
 \le c'\frac{\log N_j}{N_j}
 \,.
\]
If $N_j\le N<N_{j+1}$, monotonicity gives
\[
 \frac{N_jG_{N_{j+1}}(x)}{\log N_{j+1}}
 \le\frac{NG_N(x)}{\log N}
 \le\frac{N_{j+1}G_{N_j}(x)}{\log N_j}.
\]
Thus the liminf is at least $c/r$ and the limsup at most $rc'$.
Let $r\downarrow1$, $c\uparrow1$, and $c'\downarrow1$ through
countable sequences. Finally cover $\R$ by countably many bounded
intervals $Q_0$. This proves the claimed Lebesgue-almost-everywhere limit.

\section{Independent uniform points and geometric approximation}\label{sec:iid-points}
We prove \cref{thm:iid} and establish the geometric estimates used later
for lacunary dilates.

\subsection{Normalization and finite inner approximations}
Let $B_2$ be the closed Euclidean unit ball and
$\mathbf e_1,\ldots,\mathbf e_d$ the coordinate vectors. For matrices,
$\|\cdot\|$ denotes the induced operator norm. Equip the rotation group
$SO(d)$ with normalized Haar measure.

\begin{lemma}\label{ac:normalization}
Every convex body $K\subset[0,1]^d$ can be written
\begin{equation}\label{ac:normalize}
 K=c_K+U\mathsf D\mathcal P,\qquad
 \mathsf D=\operatorname{diag}(\ell_1,\ldots,\ell_d),\quad
 \ell_1\ge\cdots\ge\ell_d>0,\qquad
 B_2\subset\mathcal P\subset d^2B_2,
\end{equation}
where $U\in SO(d)$.  Consequently, for $s=\vol(K)$ and $w=\ell_d$,
\begin{equation}\label{ac:scales}
 \ell_1\le C_d,\qquad
 c_ds\le\prod_{j=1}^d\ell_j\le C_ds,\qquad
 c_ds\le w\le C_ds^{1/d}.
\end{equation}
For each $0<\varepsilon<1$, one can choose a finite family of convex
polytopes and a number $\tau>0$, depending only on $d,\varepsilon$, such
that for each normalized body $\mathcal P$ in \eqref{ac:normalize}, a
polytope $P$ in this family satisfies
\[
 P+\tau B_2\subset\mathcal P,\qquad
 \vol(P)\ge(1-\varepsilon)\vol(\mathcal P),\qquad
 \tfrac12 B_2\subset P\subset d^2B_2.
\]
\end{lemma}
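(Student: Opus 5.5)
The plan is to use John's theorem for the normalization and a compactness argument in the Hausdorff metric for the finite family.

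\emph{Normalization.} By John's theorem, $K$ contains an ellipsoid $\mathcal E=c_K+A B_2$ with $K\subset c_K+d(\mathcal E-c_K)$. Here $A$ is positive definite, and the factor $d$ is valid without a symmetry assumption. Write the polar decomposition $A=U\mathsf D U^{T}$ with $U\in SO(d)$, permuting coordinates (and flipping one sign if needed) so that the diagonal entries decrease. Since $B_2$ is rotation invariant, we have $AB_2=U\mathsf D B_2$. Setting $\mathcal P=\mathsf D^{-1}U^{-1}(K-c_K)$ then gives $B_2\subset\mathcal P\subset dB_2\subset d^2B_2$, which is \eqref{ac:normalize}; the slack $d^2$ will be needed below.

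\emph{Scales.} We have $\vol(K)=\prod\ell_j\vol(\mathcal P)$, and $\omega_d\le\vol(\mathcal P)\le d^{2d}\omega_d$, so $\prod\ell_j\asymp_d s$. Since $U\mathsf D B_2\subset K-c_K\subset[-1,1]^d$, we get $\ell_1\le\sqrt d$. Because $w=\ell_d$ is the smallest of the $\ell_j$, $w^d\le\prod\ell_j\le C_ds$, so $w\le C_ds^{1/d}$. Also $w=\prod_j\ell_j/\prod_{j<d}\ell_j\ge c_ds/\ell_1^{d-1}\ge c_ds$. This gives \eqref{ac:scales}.

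\emph{Finite approximation.} Let $\mathfrak K$ be the set of convex bodies $\mathcal P$ with $B_2\subset\mathcal P\subset d^2B_2$. By Blaschke selection, $\mathfrak K$ is compact in the Hausdorff metric, and volume is continuous on it. For a single $\mathcal P\in\mathfrak K$, take $P_0=(1-\lambda)\mathcal P$ with $\lambda$ small. Since $B_2\subset\mathcal P$, convexity gives $(1-\lambda)\mathcal P+\lambda B_2\subset\mathcal P$. Replace $P_0$ by an inscribed polytope $P$ that is Hausdorff-close to $P_0$, chosen to contain $(1-2\lambda)\mathcal P\supset\tfrac12B_2$ (for instance, the convex hull of a fine net of points of $P_0$, together with $\tfrac34B_2$ approximated by a polytope). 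Then
\[
 P+\tfrac{\lambda}{2}B_2\subset\mathcal P,\qquad \vol(P)\ge(1-2\lambda)^d\vol(\mathcal P).
\]
The main point is uniformity over $\mathfrak K$. Suppose $P$ works for $\mathcal P$ with margin $\lambda/2$, and $\mathcal P'\in\mathfrak K$ satisfies $d_H(\mathcal P,\mathcal P')<\delta$. For convex bodies containing $B_2$, Hausdorff closeness gives $\mathcal P\subset(1+\delta)\mathcal P'$. Hence $(1+\delta)^{-1}P+(\lambda/2-\delta)B_2\subset\mathcal P'$, while the volume and inner-ball conditions survive with slightly worse constants. Now cover $\mathfrak K$ by finitely many such $\delta$-neighbourhoods and take the corresponding rescaled polytopes as the family. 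This yields a uniform $\tau>0$ and volume ratio at least $1-\varepsilon$, after choosing $\lambda$ and $\delta$ small in terms of $\varepsilon$ and $d$.
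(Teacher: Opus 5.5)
Your proof is correct, but it differs from the paper's in both halves.

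\emph{Normalization.} The paper does not invoke John's theorem. It takes a maximal-volume simplex $T\subset K$ with centroid $c_K$. A vertex-exchange argument shows $T\subset K\subset c_K-d(T-c_K)$. It then writes $T=c_K+L\Delta$, where $\Delta$ is a regular simplex with $B_2\subset\Delta\subset dB_2$, and takes a singular value decomposition of $L$. These two factors of $d$ are where the constant $d^2$ in \eqref{ac:normalize} comes from. Your John-ellipsoid route gives the sharper inclusion $\mathcal P\subset dB_2$ at the cost of citing a deeper theorem; the paper's argument is self-contained.

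\emph{Finite family.} The paper avoids compactness entirely. It sets $P=\operatorname{conv}\bigl(h\Z^d\cap(1-\rho)\mathcal P\bigr)$ and checks $(1-\rho-\sqrt d\,h)\mathcal P\subset P\subset(1-\rho)\mathcal P$. This gives $\tau=\rho$ explicitly, and finiteness is automatic because only finitely many points of $h\Z^d$ lie in $d^2B_2$. That single construction replaces your Blaschke selection, the perturbation inclusion $\mathcal P\subset(1+\delta)\mathcal P'$, and the finite $\delta$-net. Your version is non-constructive but equally valid. The later uses of the lemma, in Lemmas~\ref{ac:catalogue} and~\ref{ac:relative-pinned}, need only the existence of such a finite family with a uniform $\tau$, so nothing downstream is lost.

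\emph{Minor point.} The ``together with $\tfrac34B_2$'' in your parenthetical is unnecessary. The convex hull of an $\eta$-net of $P_0\supset(1-\lambda)B_2$ already contains $(1-\lambda-\eta)\mathcal P$. This follows by separating a missed point with a supporting functional. Hence $P\supset(1-2\lambda)\mathcal P$ as soon as $\eta\le\lambda$.
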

\begin{proof}
Let $T=\operatorname{conv}\{v_0,\ldots,v_d\}\subset K$ be a
maximal-volume simplex and let $c_K$ be its centroid.  If
$x=\sum_{j=0}^d\lambda_jv_j\in K$, with $\sum_j\lambda_j=1$, then
replacing $v_j$ by $x$ produces a simplex in $K$ of volume
$|\lambda_j|\vol(T)$.  Maximality gives $|\lambda_j|\le1$.
The numbers $(1-\lambda_j)/d$ are nonnegative and sum to
one, so
\[
 x=(d+1)c_K-d\sum_{j=0}^d\frac{1-\lambda_j}{d}v_j
 \in c_K-d(T-c_K).
\]
Thus $T\subset K\subset c_K-d(T-c_K)$; see also
Lagarias--Ziegler~\cite[Theorem~3(a)]{LagariasZiegler1991}.
Let $\Delta$ be a regular simplex centered at the origin with inradius
one, so $B_2\subset\Delta\subset dB_2$, and write
$T=c_K+L\Delta$.  It follows that
\[
 B_2\subset L^{-1}(K-c_K)\subset-d\Delta\subset d^2B_2.
\]
A singular value decomposition of $L$ gives \eqref{ac:normalize}; a
reflection may be absorbed into $\mathcal P$ so that $U\in SO(d)$.
The inclusion $c_K+U\mathsf D B_2\subset[0,1]^d$ bounds $\ell_1$.
Comparing volumes gives
$\prod_j\ell_j\asymp_ds$, and the ordering of the scales gives the bounds
for $w$ in \eqref{ac:scales}.

For the polytope approximation, choose small $\rho,h>0$,
and let $P$ be the convex hull of the points of $h\Z^d$ in
$(1-\rho)\mathcal P$.  Put $R=d^2$.  Since
$B_2\subset\mathcal P\subset RB_2$, every point of
$(1-\rho-\sqrt d h)\mathcal P$ has a ball of radius $\sqrt d h$
inside $(1-\rho)\mathcal P$.  The lattice cube containing that point
therefore has all its vertices in $(1-\rho)\mathcal P$, and hence
\[
 (1-\rho-\sqrt d h)\mathcal P\subset P\subset(1-\rho)\mathcal P.
\]
Also $P+\rho B_2\subset\mathcal P$.  Choosing $\rho,h$ sufficiently
small proves the volume and ball inclusions, with $\tau=\rho$.
Only finitely many lattice points occur in $RB_2$, so the possible
polytopes $P$ form a finite family.
\end{proof}

\begin{lemma}\label{ac:catalogue}
For every fixed $0<\varepsilon<1$ and all sufficiently small $s>0$,
there is a collection $\mathscr K_s$ of convex bodies in $[0,1]^d$
such that every convex body in the cube of volume at least $s$ contains
a member. Every member has volume exactly $(1-\varepsilon)s$, and
\begin{equation}\label{ac:entropy}
 \#\mathscr K_s\le C_{d,\varepsilon}s^{-d}
                         (\log(1/s))^{C_d}.
\end{equation}
The mesh center, scales, and orthogonal frame associated with each
member are retained after the final volume adjustment.
\end{lemma}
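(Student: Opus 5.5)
We will build $\mathscr K_s$ from the normalization in \cref{ac:normalization} by discretizing each parameter of the representation $K=c_K+U\mathsf D\mathcal P$. Fix a small auxiliary $\varepsilon_1=\varepsilon_1(d,\varepsilon)$. Take the finite polytope family and the margin $\tau$ from \cref{ac:normalization} with $\varepsilon_1$ in place of $\varepsilon$. Given a convex body $K\subset[0,1]^d$ with $\vol(K)\ge s$, choose a sub-body of volume exactly $s$. A homothet of $K$ about an interior point lies in $K$, and its volume varies continuously, so this is possible. We may therefore assume $\vol(K)=s$, and then \eqref{ac:scales} gives $c_ds\le\ell_j\le C_d$ for all $j$.

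The discretization proceeds in three steps.

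First, each scale $\ell_j$ is rounded down to the geometric grid $\{(1+\varepsilon_1)^{-k}\}$. Since $\ell_j\in[c_ds,C_d]$, each coordinate has $O(\log(1/s))$ choices, which gives $(\log(1/s))^{d}$ choices overall. This is the source of the logarithmic factor.

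Second, the rotation $U$ is replaced by a point $U'$ of a $\delta$-net of $SO(d)$.

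Third, the center $c_K$ is rounded to a point of a grid $\eta\Z^d$.

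The candidate body is $P'=c'+U'\mathsf D'P$, shrunk slightly. To check containment, write $K\supset c_K+U\mathsf D(P+\tau B_2)$. It suffices that the displacement $\|c'-c_K\|$ plus the rotation error is at most $\tau w$, where $w=\ell_d\ge c_ds$. The rotation error is $\|(U'-U)\mathsf D x\|\le\delta\ell_1d^2\le C_d\delta$ for $x\in P$.

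The crude choice $\delta\asymp s$ costs $s^{-\dim SO(d)}=s^{-d(d-1)/2}$ rotations, which exceeds the target $s^{-d}$ when $d\ge4$. This entropy count is the main obstacle. The way around it is to make the precision anisotropic and adapted to the scales. Choose a frame in which the $j$-th axis direction $u_j=U\mathbf e_j$ is specified only to angular precision $\tau\ell_d/\ell_j$ relative to the directions $u_i$ with $i<j$. The relevant error is $\sum_j\ell_j\,|u_j'-u_j|$ in the directions orthogonal to the shorter axes. Tilting $u_j$ toward $u_i$ with $i<j$ (that is, toward a longer axis) is harmless up to precision $\ell_i$-relative, so the rotation error is controlled by the pairs $i<j$ with tolerance $\tau\ell_j/\ell_i$ on the $(i,j)$ rotation angle.

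The net size is then about $\prod_{i<j}\ell_i/\ell_j$, and the center needs about $\prod_j\ell_j^{-1}\asymp s^{-1}$ cells once the grid is also taken anisotropic along the frame. The key computation is
\[
 \prod_{i<j}\frac{\ell_i}{\ell_j}\cdot\prod_j\frac1{\ell_j}
 =\prod_j\ell_j^{\,d-2j}\le C_ds^{-d}\ \text{(up to constants)},
\]
which follows from $\ell_j\in[c_ds,C_d]$ and $\prod_j\ell_j\asymp s$. The exponents $d-2j$ are positive for small $j$; for those factors, bound $\ell_j^{d-2j}\le C$. Applying this bound to the negative-exponent factors and using the product constraint gives the total $\ll s^{-d}$, with equality in the regime where one axis is long and the rest are short. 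The resulting rigorous statement is a covering of the flag manifold by anisotropic boxes, following standard constructions for John-ellipsoid nets.

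Finally, every net member $P'$ contains some convex body of volume at least $(1-\varepsilon_1)^{C_d}s$ lying inside $K$. Shrink it homothetically about its mesh center to volume exactly $(1-\varepsilon)s$ and intersect with $[0,1]^d$ if necessary; intersecting costs no volume, because we shrink into $K\subset[0,1]^d$. Choosing $\varepsilon_1$ small in terms of $\varepsilon$ makes this possible. Only the dilation factor changes, so the center, scales and frame are retained, as the statement requires. Multiplying the counts gives \eqref{ac:entropy}.
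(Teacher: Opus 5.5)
Your architecture is the paper's. You reduce to volume exactly $s$, normalize via \cref{ac:normalization}, and mesh the scales geometrically. You use a center mesh of spacing $\asymp\ell_j$ along the frame, an orientation net with tolerance $\asymp\ell_j/\ell_i$ on the $(i,j)$ angle ($i<j$), and the product identity giving $O(s^{-d})$.

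\textbf{The gap.} The step you call the main obstacle is not proved. You assert, by appeal to ``standard constructions for John-ellipsoid nets'', that $SO(d)$ is covered by $O(\prod_{i<j}\ell_i/\ell_j)$ anisotropic boxes. This is not automatic, because $SO(d)$ is noncommutative. A product grid in fixed exponential coordinates fails. For $U=U_0e^{S}$ and a grid point $U'=U_0e^{S'}$,
\[
 U'^{-1}U=e^{(S-S')-\frac12[S',S-S']+\cdots},
\]
and the commutator is not small after conjugation by $\mathsf D$, which inflates entries by up to $s^{-1}$. For a slab in $d=3$ ($\ell_1=\ell_2=1$, $\ell_3=s$), take an $O(1)$ component $S'_{13}$ and an admissible error $\gamma$ in the $(1,2)$ angle. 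Their commutator tilts the normal by $O(\gamma)$, while the tolerance is $O(\gamma s)$.

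Your tolerances are also inconsistent: $\tau\ell_d/\ell_j$ in one sentence, $\tau\ell_j/\ell_i$ in the next. Moreover, the only containment criterion you state (total displacement at most $\tau w$) is the isotropic one. The anisotropic center grid already violates it, since it allows displacements of size $\asymp\ell_1$ along $u_1$.

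\textbf{How the paper closes it.} Containment is checked in normalized coordinates. Suppose $\|\mathsf D^{-1}(U_\alpha^{-1}U-I)\mathsf D\|\le\delta$, the scales agree to ratio $1+\delta$, and the center error is at most $\delta\ell_j$ along each net-frame direction. Then after applying $(U\mathsf D)^{-1}$ every point of the approximant moves by $O_d(\delta)$, and the margin $P+\tau B_2\subset\mathcal P$ absorbs this.

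The net is built from $\mathcal U_\gamma(\mathsf D)=\{R\in SO(d):\|\mathsf D^{-1}(R-I)\mathsf D\|\le\gamma\}$:
\begin{itemize}
\item Its Haar measure is $\gg\prod_{j<k}\ell_k/\ell_j$, by exponentiating skew matrices with $|S_{jk}|\le c\gamma\ell_k/\ell_j$.
\item The identity $\mathsf D^{-1}R_1R_2^{-1}\mathsf D=(\mathsf D^{-1}R_1\mathsf D)(\mathsf D^{-1}R_2\mathsf D)^{-1}$ shows the neighborhoods are stable under $R_1R_2^{-1}$.
\item Hence a maximal disjoint family of left translates $U_\alpha\mathcal U_\gamma(\mathsf D)$ gives $U_\alpha^{-1}U\in\mathcal U_\delta(\mathsf D)$ for every $U$. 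There are at most $C\prod_{j<k}\ell_j/\ell_k$ such translates.
\end{itemize}

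\textbf{Smaller corrections.}
\begin{itemize}
\item The bound $\prod_j\ell_j^{d-2j}\ll s^{-d}$ is cleanest via $\prod_j\ell_j^{d-2j}=(\prod_j\ell_j)^{-d}\prod_j\ell_j^{2(d-j)}$. Bounding negative-exponent factors individually by $\ell_j\ge c_ds$ overshoots for $d\ge3$.
\item The extremal regime is the thin slab ($\ell_1\asymp\cdots\asymp\ell_{d-1}\asymp1$, $\ell_d\asymp s$), not the needle. The needle costs only $s^{-2}$.
\item $\mathscr K_s$ must be fixed before $K$. Candidates not contained in $[0,1]^d$ should be discarded rather than intersected with the cube, since intersecting can produce members of volume below $(1-\varepsilon)s$.
\end{itemize}
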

\begin{proof}
It suffices to approximate bodies of volume exactly $s$, since every
larger body contains such a homothetic subbody.  Apply
\cref{ac:normalization}.  Choose geometric meshes of ratio $1+\delta$
for the scales $\ell_j$, where $\delta>0$ will depend only on
$d,\varepsilon$.  By \eqref{ac:scales}, there are at most a fixed power
of $\log(1/s)$ scale choices.  We keep only choices with
$\det\mathsf D\asymp_ds$.  For each choice of scales and orientation,
a center mesh with spacings $\delta\ell_j$ in the frame directions has
$O_{d,\delta}((\det\mathsf D)^{-1})=O_{d,\delta}(s^{-1})$ points in a
fixed bounded region containing the cube.

For the orientation net, set
\[
 \mathcal U_\gamma(\mathsf D)
 =\{R\in SO(d):\|\mathsf D^{-1}(R-I)\mathsf D\|\le\gamma\}.
\]
For fixed sufficiently small $\gamma>0$,
\begin{equation}\label{eq:orientation-neighborhood}
 \operatorname{Haar}(\mathcal U_\gamma(\mathsf D))
 \ge c_{d,\gamma}\prod_{j<k}\frac{\ell_k}{\ell_j}.
\end{equation}
Indeed, take skew-symmetric matrices $S$ with
$|S_{jk}|\le c_d\gamma\ell_k/\ell_j$ for $j<k$.  Then
$\|\mathsf D^{-1}S\mathsf D\|\le\gamma/2$ if $c_d$ is small enough,
so $e^S\in\mathcal U_\gamma(\mathsf D)$. These matrices have uniformly
small operator norm, so the exponential map has a Jacobian bounded away
from zero on this box. The box volume gives
\eqref{eq:orientation-neighborhood}.

For $\gamma$ sufficiently small compared with $\delta$, choose a maximal
family of disjoint left translates
$U_\alpha\mathcal U_\gamma(\mathsf D)$.  If
$R_1,R_2\in\mathcal U_\gamma(\mathsf D)$, then
\[
 \|\mathsf D^{-1}(R_1R_2^{-1}-I)\mathsf D\|
 \le \frac{2\gamma}{1-\gamma}\le\delta,
\]
where the first inequality follows by writing
$\mathsf D^{-1}R_i\mathsf D=I+E_i$ and using
$\|(I+E_2)^{-1}\|\le(1-\gamma)^{-1}$.
Consequently, maximality ensures that every $U\in SO(d)$ satisfies
$U_\alpha^{-1}U\in\mathcal U_\delta(\mathsf D)$ for some $\alpha$.
The number of chosen frames is therefore at most
\[
 C_{d,\delta}\prod_{j<k}\frac{\ell_j}{\ell_k}
 \le C_{d,\delta}(\det\mathsf D)^{-(d-1)}
 \ll_{d,\delta}s^{-(d-1)}.
\]
For the middle inequality, multiply by $(\det\mathsf D)^{d-1}$ and use
\[
 (\det\mathsf D)^{d-1}\prod_{j<k}\frac{\ell_j}{\ell_k}
 =\prod_{j=1}^d\ell_j^{2(d-j)}\ll_d1.
\]
When $d=1$, the orientation group is a singleton and the same formulas
hold with empty products equal to one.

Use the finite normalized polytopes from \cref{ac:normalization}, with
a smaller preliminary volume loss than $\varepsilon$. In coordinates
normalized by $(U\mathsf D)^{-1}$, the center, scale, and orientation
mesh errors move each point of an approximant by at most $C_d\delta$.
The margin $P+\tau B_2\subset\mathcal P$ absorbs these
errors if $\delta$ is sufficiently small.  The approximating body then
lies inside $K$ and has volume at least $(1-\varepsilon)s$.
Discard representatives not contained in the cube or not satisfying this
volume bound, and contract each remaining representative about its mesh
center to volume exactly $(1-\varepsilon)s$. Since the normalized
polytope contains $\tfrac12B_2$, the mesh center lies in the
representative, so contraction preserves containment. Keep the mesh
scales and frame after contraction; the normalized bodies remain
between two fixed concentric balls. Multiplying the orientation and
center counts and the logarithmic number of scale choices proves
\eqref{ac:entropy}.
\end{proof}

\subsection{Thin targets and intersections}

\begin{lemma}\label{lem:two-slab-intersection}
Let $d\ge2$, let $E\subset\R^d$ be measurable with diameter at most
$D$, and let $a,b\in\R^d$ be linearly independent.  If $I,J$ are
intervals of lengths $w,w'$, then
\[
 \vol\{x\in E:a\cdot x\in I,\ b\cdot x\in J\}
 \le C_dD^{d-2}\frac{ww'}
 {\sqrt{\|a\|^2\|b\|^2-(a\cdot b)^2}}.
\]
\end{lemma}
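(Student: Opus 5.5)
The plan is to change to coordinates adapted to the plane spanned by $a$ and $b$, and then apply Fubini. Write $\Pi=\operatorname{span}\{a,b\}$, which is $2$-dimensional because $a,b$ are linearly independent, and let $\Pi^\perp$ be its orthogonal complement, of dimension $d-2$. Each $x\in\R^d$ decomposes uniquely as $x=y+z$ with $y\in\Pi$ and $z\in\Pi^\perp$. Since $a\cdot z=b\cdot z=0$, the two constraints $a\cdot x\in I$ and $b\cdot x\in J$ involve only $y$. By Fubini,
\[
 \vol\{x\in E:a\cdot x\in I,\ b\cdot x\in J\}
 =\int_{\Pi^\perp}\operatorname{area}\{y\in\Pi:\ y+z\in E,\ a\cdot y\in I,\ b\cdot y\in J\}\,dz .
\]

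First I bound the range of $z$. The orthogonal projection of $E$ onto $\Pi^\perp$ has diameter at most $D$, so it lies in a ball of radius $D$ in $\Pi^\perp$. Hence the set of $z$ for which the integrand is nonzero has $(d-2)$-dimensional measure at most $C_dD^{d-2}$. When $d=2$ this factor is simply $1$, and no diameter hypothesis is needed.

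Second, for each fixed $z$ I bound the integrand by the area of the whole parallelogram $\{y\in\Pi:a\cdot y\in I,\ b\cdot y\in J\}$. The linear map $y\mapsto(a\cdot y,b\cdot y)$ from $\Pi$ to $\R^2$ is invertible. In an orthonormal basis of $\Pi$ its matrix has rows equal to the coordinates of $a$ and $b$, so its determinant has absolute value
\[
 \sqrt{\det\begin{pmatrix}\|a\|^2&a\cdot b\\ a\cdot b&\|b\|^2\end{pmatrix}}
 =\sqrt{\|a\|^2\|b\|^2-(a\cdot b)^2}.
\]
The preimage of the rectangle $I\times J$, which has area $ww'$, therefore has area $ww'/\sqrt{\|a\|^2\|b\|^2-(a\cdot b)^2}$.

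Multiplying the bound on the range of $z$ by this uniform bound on the fibres gives the stated estimate. There is no genuine obstacle here. The only point needing care is the Gram-determinant identity for the Jacobian, together with the remark that restricting to $E$ can only decrease each fibre. Measurability of the sets involved is immediate, since they are intersections of $E$ with Borel sets.
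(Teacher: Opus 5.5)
Your proposal is correct and is essentially the paper's own proof. You slice along planes parallel to $\operatorname{span}\{a,b\}$ and bound each slice by the parallelogram area $ww'/\sqrt{\|a\|^2\|b\|^2-(a\cdot b)^2}$ via the Gram-determinant Jacobian, then integrate over a radius-$D$ ball in $\Pi^\perp$ containing the projection of $E$. Integrating over that ball, rather than over the projection itself, also sidesteps any measurability question about the projection.
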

\begin{proof}
Slice along affine planes parallel to $\operatorname{span}\{a,b\}$.
On each plane, the map $x\mapsto(a\cdot x,b\cdot x)$ has
absolute Jacobian
$\sqrt{\|a\|^2\|b\|^2-(a\cdot b)^2}$, so the two strip conditions
have area $ww'$ divided by this number. The projection of $E$ onto the
orthogonal complement lies in a ball of radius $D$, of
$(d-2)$-dimensional volume at most $C_dD^{d-2}$. Integrating the area
bound over this ball proves the claim; for $d=2$ the latter factor is one.
\end{proof}

For both the independent and lacunary lower bounds, put $I_0=[1/4,1/2]$
and, for $d\ge2$,
\[
 U_0=[0,1/(8d)]^{d-1},\qquad \Theta=U_0\times I_0,
\]
with normalized Lebesgue measure $\nu$.  For $\vartheta=(u,t)\in\Theta$,
set
\begin{equation}\label{eq:model-slabs}
 S_\vartheta(w)=\{(z,y):z\in(0,1)^{d-1},\quad
                         t+u\cdot z<y<t+u\cdot z+w\}.
\end{equation}
For small $w>0$, this is the interior of a convex body in $[0,1]^d$
of volume $w$. In dimension one, use $\Theta=I_0$, $S_t(w)=(t,t+w)$,
and let $\nu$ be normalized Lebesgue measure on $I_0$.

\begin{lemma}\label{lem:iid-thin-boxes}
Let $\Omega\subset\R^d$ be a convex body.  There are constants
$C,\delta_0>0$, depending only on $\Omega$, such that, whenever
$0<Cs\le\delta\le\delta_0$, there are $M\asymp_\Omega\delta^{-d}$
convex bodies $Q_1,\ldots,Q_M$ compactly contained in
$\operatorname{int}\Omega$, each of volume $s$, with
\begin{equation}\label{eq:iid-box-intersection}
 \vol(Q_j\cap Q_k)\le C_\Omega s^2/\delta
 \qquad(j\ne k).
\end{equation}
In dimension one the intervals can be chosen pairwise disjoint.
\end{lemma}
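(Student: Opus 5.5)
We construct the family inside a small cube contained in $\operatorname{int}\Omega$ and use thin slabs whose normals are $\delta$-separated in direction and offset. Fix a closed cube $Q^\circ=c_0+[0,\lambda]^d$ with $c_0+[-\lambda,2\lambda]^d\subset\operatorname{int}\Omega$; $\lambda$ depends only on $\Omega$. For $d=1$, subdivide $Q^\circ$ into $\asymp\delta^{-1}$ disjoint subintervals of length $\delta$. Since $s\le\delta/C$, each subinterval contains an interval of length $s$, and these intervals are pairwise disjoint. This settles the one-dimensional claim.

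For $d\ge2$, the bodies are the model slabs of \eqref{eq:model-slabs}, rescaled into $Q^\circ$. Let $A$ be the affine map $y\mapsto c_0+\lambda y$, and take
\[
 Q_{u,t}=\overline{A\bigl(S_{(u,t)}(w)\bigr)},\qquad w=s\lambda^{-d}.
\]
Each $Q_{u,t}$ has volume exactly $s$ and is compactly contained in $\operatorname{int}\Omega$. Let $u$ range over a grid of spacing $\delta$ in $U_0$ and $t$ over a grid of spacing $\delta$ in $I_0$. This gives $M\asymp\delta^{-(d-1)}\cdot\delta^{-1}=\delta^{-d}$ bodies, with constants depending on $d,\lambda$. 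We may take $\delta_0$ small so that the grids are nonempty and of the stated size.

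Now estimate $\vol(Q_j\cap Q_k)$ for distinct parameters $(u,t)\neq(u',t')$. Up to the factor $\lambda^d$, it suffices to bound the slab intersection in $(0,1)^d$. The slabs have the form $\{a\cdot x\in I\}$ and $\{b\cdot x\in J\}$ with $a=(-u,1)$, $b=(-u',1)$ and $|I|=|J|=w$.

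There are two cases. If $u\ne u'$, then $\|u-u'\|\ge\delta$. Since $\|a\|,\|b\|\asymp1$, the quantity
\[
 \sqrt{\|a\|^2\|b\|^2-(a\cdot b)^2}=\|a\wedge b\|
\]
is $\gg_d\|u-u'\|\ge\delta$. Apply \cref{lem:two-slab-intersection} with $E=(0,1)^d$, so $D=\sqrt d$. This gives $\vol\ll w^2/\delta\asymp s^2/\delta$. If $u=u'$, then $|t-t'|\ge\delta$, and we need $\delta>w$ so that the two slabs are parallel and disjoint. This holds once $C\ge2\lambda^{-d}$, since then $w=s\lambda^{-d}\le\delta/2$. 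In that case the intersection is empty.

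Both cases give \eqref{eq:iid-box-intersection}. The only step needing care is the lower bound $\|a\wedge b\|\gg\|u-u'\|$. It follows from the expansion
\[
 \|a\wedge b\|^2=\|u-u'\|^2+\|u\wedge u'\|^2\ge\|u-u'\|^2,
\]
which is Lagrange's identity. I expect no real obstacle: this estimate is the only nontrivial step, and it is elementary.
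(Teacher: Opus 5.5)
Your proposal is correct and follows the paper's proof essentially step for step: rescaled model slabs over a $\delta$-grid in $\Theta$, disjointness for equal slopes because $w<\delta$, and \cref{lem:two-slab-intersection} with the Gram bound $\ge\|u-u'\|$. Your Lagrange-identity expansion makes this bound explicit, where the paper phrases it as retaining the minors involving the last coordinate; the only other difference is cosmetic, namely that in dimension one you subdivide into length-$\delta$ cells rather than using the intercept grid in $I_0$.
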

\begin{proof}
Choose a cube $x_0+r[0,1]^d$ compactly contained in
$\operatorname{int}\Omega$.  Put $w=s/r^d$, choose a Cartesian grid of
spacing $\delta$ in $\Theta$, and take the closures of
$x_0+rS_\vartheta(w)$ over this grid.
For $C$ large and $\delta_0$ small, $w<\delta$ and all these targets
lie in the cube.  Their number is $\asymp_\Omega\delta^{-d}$, and
each has volume $r^dw=s$.

The intervals in dimension one, and targets with equal slopes in higher
dimensions, are disjoint because their intercepts are separated by at
least $\delta>w$. For distinct slopes, some coordinate differs by at
least $\delta$. For their normals
$a=(-u,1)$ and $b=(-u',1)$,
\[
 \sqrt{\|a\|^2\|b\|^2-(a\cdot b)^2}
 \ge\|u-u'\|\ge\delta.
\]
The first inequality follows by retaining the minors involving the last
coordinate in the Gram determinant. Apply \cref{lem:two-slab-intersection}
in the unit cube, with both strip widths $w$, and then scale by $r$.
The resulting intersection
bound is $C_dr^dw^2/\delta=C_\Omega s^2/\delta$.
\end{proof}

\subsection{The independent-point theorem}
Under the hypotheses of \cref{thm:iid}, for every $\varepsilon>0$ there
are $\alpha>0$ and $C_{\Omega,\varepsilon}>0$ such that
\begin{equation}\label{eq:iid-tails}
 \mathbb P\left(\left|\frac{N V_N^*}{\vol(\Omega)\log N}-d\right|
                >\varepsilon\right)
 \le C_{\Omega,\varepsilon}N^{-\alpha}\qquad(N\ge2).
\end{equation}
\begin{proof}[Proof of \cref{thm:iid} and \eqref{eq:iid-tails}]
By rescaling, we may assume $\vol(\Omega)=1$. The random variable $V_N^*$
is Borel measurable: the supremum may be restricted to the countable
family of rational polytopes. Contract an admissible body slightly about an interior point,
then approximate it from inside by a rational polytope, with arbitrarily
small total volume loss. The contracted body avoids all sample points,
so requiring the whole body to avoid the sample gives the same supremum.

For the upper bound, fix $c>d$, choose $\varepsilon_0>0$ with
$(1-\varepsilon_0)c>d$, and put $s=c\log N/N$.  Enclose $\Omega$ in a
fixed cube.  Rescaling \cref{ac:catalogue} to this cube and discarding
members not contained in $\Omega$ gives an inner approximating family
of size at most
\[
 C_{\Omega,\varepsilon_0}s^{-d}(\log(1/s))^{C_d},
\]
whose members have volume $(1-\varepsilon_0)s$. Independence and a
union bound give
\begin{align*}
 \mathbb P(V_N^*>s)
 &\le C_{\Omega,\varepsilon_0}s^{-d}(\log(1/s))^{C_d}
             (1-(1-\varepsilon_0)s)^N\\
 &\le N^{d-(1-\varepsilon_0)c+o(1)}.
\end{align*}

For the lower bound, fix $0<c<d$, choose $c/d<\beta<1$, and put
\[
 s=c\log N/N,\qquad \delta=N^{-\beta}.
\]
For all large $N$, \cref{lem:iid-thin-boxes} gives
$M\asymp_\Omega\delta^{-d}$ convex targets of volume $s$ satisfying
\eqref{eq:iid-box-intersection}.  Let $Z$ count those with empty
interior, and put $p=(1-s)^N$.  Then
\[
 \mathbb EZ=Mp\asymp_\Omega N^{\beta d-c}.
\]
For two distinct targets with overlap volume $I$, independence gives
\[
 \frac{\mathbb P(\text{both targets are empty})}{p^2}
 =\left(\frac{1-2s+I}{(1-s)^2}\right)^N
 \le\exp(C_\Omega Ns^2/\delta).
\]
Since $Ns^2/\delta\to0$, the variance bound is
\begin{align*}
 \frac{\operatorname{Var}Z}{(\mathbb EZ)^2}
 &\le\frac1{Mp}+\exp(C_\Omega Ns^2/\delta)-1\\
 &\ll_\Omega N^{c-\beta d}+N^{\beta-1}(\log N)^2.
\end{align*}
The choice $\beta=(c+1)/(d+1)$ lies in $(c/d,1)$ and makes both powers
of $N$ equal to $-(d-c)/(d+1)$. By Chebyshev's inequality,
$\mathbb P(Z=0)$ is bounded by a negative power of $N$, and
\[
 \mathbb P(V_N^*<c\log N/N)\le\mathbb P(Z=0).
\]

Taking $c=d+\varepsilon$ above, and $c=d-\varepsilon$ below when
$0<\varepsilon<d$, proves \eqref{eq:iid-tails}.
For $\varepsilon\ge d$ the lower-tail events are empty.  Increasing the
constant handles the finitely many small values of $N$.

The geometric-subsequence argument in Section~\ref{sec:one-dimensional}
applies to \eqref{eq:iid-tails}: $V_N^*$ is nonincreasing, and the tails
are summable along $N_m=\lceil r^m\rceil$ for each $r>1$.
Let $r\downarrow1$ and $\varepsilon\downarrow0$ through countable
sequences. Undoing the homothety yields
\[
 \frac{NV_N^*}{\vol(\Omega)\log N}
 \longrightarrow d\qquad\text{almost surely}.\qedhere
\]
\end{proof}

\section{Smoothing and transfer for Fourier-decaying measures}\label{sec:prelim}
We extend \cref{lem:od-transfer} to Fourier-decaying measures by
polynomial approximation of block indicators and preceding weights.

\subsection{Notation and target boundaries}
Implied constants may depend on $d,q,a_1,C_\mu,\eta$.
In the fixed-body case, they may also depend on $\mathcal C$.
By translating and rescaling $\mathcal C$, we may assume
$0\in\operatorname{int}\mathcal C$ and $\vol(\mathcal C)=1$, adjusting
the scale cutoff.  Write $\lambda_d$ for Haar probability on $\T^d$.
Unless specified otherwise,
expectations and $L^p$ norms on $\R^d$ use $\mu$; torus integrals use
$\lambda_d$.  Periodic functions are evaluated modulo
$\Z^d$.  Euclidean and maximum-norm boundary estimates
are equivalent up to dimensional constants.  Lacunarity gives
\begin{equation}\label{eq:growth}
 a_j/a_k\le q^{j-k}\ (j\le k),\qquad
 \sum_{j\le k}a_j\ll_q a_k,\qquad a_k\ge a_1q^{k-1}.
\end{equation}
The suprema in the introduction are measurable: strict superlevel sets
are countable unions of fixed-target avoidance events, using inner
rational polytopes for arbitrary convex bodies, and a slight contraction
followed by rational translation and scale for homothets.

The targets and incidence sets used below arise by projection modulo
$\Z^d$ from unions of a uniformly bounded number of convex sets in
a fixed bounded box. They satisfy
\begin{equation}\label{eq:uniform-boundary}
 \lambda_d\{y:\dist(y,\partial J)\le\delta\}\ll\delta
 \qquad(0<\delta\le1),
\end{equation}
uniformly in their eccentricity.  The boundary layer in
\eqref{eq:uniform-boundary} is taken after projection to $\T^d$.
For a convex lift $K$ in a box of bounded side lengths, let $\pi_j$ denote the
coordinate projection that deletes coordinate $j$, and successively add
$[-\delta,\delta]\mathbf e_j$.  The volume added in direction $j$ is
$2\delta\vol_{d-1}(\pi_jK')=O(\delta)$, where $K'$ is the current
convex set.  This bounds the outer layer.  For an inner-layer point
$y$, a nearest supporting hyperplane has unit normal $n$ and distance
at most $\delta$.  Some $|n_j|\ge d^{-1/2}$, so the coordinate fiber
through $y$ ends within $\sqrt d\delta$ in one of its two directions.
The union of these coordinate endpoint layers has volume $O(\delta)$
by integration over the coordinate projections.  Projection and bounded
unions preserve the estimate.  For
$\mathcal C_s=s^{1/d}\mathcal C$ and
$J_t=t+\operatorname{int}\mathcal C_s\pmod{\Z^d}$ we also have
\begin{equation}\label{eq:boundary}
 \vol\{y:\dist(y,\partial\mathcal C_s)\le\delta\}
 \ll_{\mathcal C}s^{(d-1)/d}\delta
 \quad(0<\delta\ll_{\mathcal C}s^{1/d}).
\end{equation}
The layer lies between homothets with scales
$s^{1/d}\pm O_{\mathcal C}(\delta)$, since $\mathcal C$ contains a
ball about $0$.

\begin{lemma}\label{lem:interval}
Put $\eta_0=\tfrac12\min\{\eta,1\}$.  If $I\subset\T$ is a union of
at most a fixed number of arcs of total length $\delta\le1/2$, then
\begin{equation}\label{eq:fourier-interval}
 \mu\{x:Dx_j\bmod1\in I\}
 =\delta+O(D^{-\eta}\delta^{\eta_0})\qquad(D\ge a_1).
\end{equation}
\end{lemma}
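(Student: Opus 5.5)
We will prove \eqref{eq:fourier-interval} by smoothing the indicator of $I$ at a scale $\epsilon$ and using the Fourier decay \eqref{eq:fourier-decay} on the smooth part; the smoothing scale is then optimized. We treat a single arc first. A union of a bounded number of arcs follows by additivity, since the error bound is subadditive up to a constant.

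Fix an arc $I$ of length $\delta$ and a scale $0<\epsilon\le\delta$. We build periodic trapezoidal functions $f_\pm$ with
\[
 f_-\le\1_I\le f_+ ,
\]
each equal to $1$ on $I$ (respectively on $I$ shrunk by $\epsilon$) and decaying linearly to $0$ over a length $\epsilon$. Then $\int_\T f_\pm=\delta+O(\epsilon)$. Their Fourier coefficients satisfy
\[
 |\widehat f_\pm(k)|\ll\min\{\delta,\,|k|^{-1},\,\epsilon^{-1}|k|^{-2}\}.
\]
This holds because each $f_\pm$ is a convolution of an interval indicator with a normalized indicator of length $\epsilon$, a product of the two transforms.

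Expanding in Fourier series gives
\[
 \int f_\pm(Dx_j)\,d\mu(x)=\int f_\pm+\sum_{k\ne0}\widehat f_\pm(k)\,\widehat\mu(-kD\mathbf e_j).
\]
The series converges absolutely because the coefficients are $O(k^{-2})$. By \eqref{eq:fourier-decay}, the error is at most
\[
 C_\mu D^{-\eta}\sum_{k\ne0}|\widehat f_\pm(k)|\,|k|^{-\eta}.
\]
We split this sum into three ranges:
\[
 |k|\le1/\delta,\qquad 1/\delta<|k|\le1/\epsilon,\qquad |k|>1/\epsilon .
\]
Writing $\eta_1=\min\{\eta,1\}$ to simplify the estimates, the first range contributes $\ll\delta\sum_{|k|\le1/\delta}|k|^{-\eta_1}$. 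The middle range contributes $\ll\sum|k|^{-1-\eta_1}\ll\delta^{\eta_1}$. The tail contributes $\ll\epsilon^{-1}\sum_{|k|>1/\epsilon}|k|^{-2-\eta_1}\ll\epsilon^{\eta_1}$.

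The main obstacle is the first range when $\eta<1$. There the bound becomes $\delta\cdot\delta^{\eta-1}=\delta^\eta$, or $\delta\log(1/\delta)$ when $\eta=1$. In all cases this is $\ll\delta^{\eta_0}$, since $\eta_0=\eta_1/2<\eta_1$ and $\delta\le1/2$. The factor $\tfrac12$ in $\eta_0$ leaves room to absorb the logarithm and the smoothing error. Altogether,
\[
 \mu\{Dx_j\in I\}=\delta+O(\epsilon)+O\bigl(D^{-\eta}(\delta^{\eta_0}+\epsilon^{\eta_1})\bigr).
\]

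It remains to choose $\epsilon$. We want $\epsilon\ll D^{-\eta}\delta^{\eta_0}$ together with $\epsilon\le\delta$. Taking
\[
 \epsilon=\min\{\delta,\;D^{-\eta}\delta^{\eta_0}\}
\]
works. Then $\epsilon^{\eta_1}\le\delta^{\eta_1}\le\delta^{\eta_0}$, so every term is $O(D^{-\eta}\delta^{\eta_0})$.

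If $D^{-\eta}\delta^{\eta_0}\ge\delta$, the claim is trivial: the two sides are both in $[0,1]$ and differ by at most $\max(\delta,1-\delta)$, which we bound using $\delta\le D^{-\eta}\delta^{\eta_0}$ on one side and the upper estimate on the other. In that regime we may simply take $\epsilon=\delta$, where the bound $\delta\ll D^{-\eta}\delta^{\eta_0}$ already holds.

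Finally, since $D\ge a_1$, the factor $(1+|k|D)^{-\eta}\le(|k|D)^{-\eta}$ is used with constants depending on $a_1$, which is permitted. Sandwiching $\1_I$ between $f_-$ and $f_+$ then gives \eqref{eq:fourier-interval}.
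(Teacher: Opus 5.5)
Your proof is correct, and its core is the paper's Fourier computation. Both arguments bound the nonconstant terms by $D^{-\eta}\sum_{h\ge1}\min\{\delta,h^{-1}\}h^{-\eta}\ll D^{-\eta}\delta^{\eta_0}$. The difference is in how the formal Fourier expansion of $\1_I$ is made rigorous.

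The paper expands the indicator itself and justifies this by Fej\'er approximation. Fej\'er means converge to the midpoint value at the endpoints of $I$. So the paper must separately show, using smooth majorants of shrinking arcs, that the pushforward of $\mu$ under $x\mapsto Dx_j\bmod1$ has no atoms. Your sandwich $f_-\le\1_I\le f_+$, with absolutely summable coefficients, handles convergence and endpoints in one step. Atomlessness is never needed, and open, closed and half-open arcs are treated alike.

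Some simplifications and small remarks:
\begin{itemize}
\item Your coefficient bounds are uniform in $\epsilon$, and the $\epsilon$-dependent errors $\epsilon$ and $D^{-\eta}\epsilon^{\eta_1}$ tend to $0$. You can therefore let $\epsilon\downarrow0$ instead of optimizing $\epsilon$.
\item The paragraph on the regime $D^{-\eta}\delta^{\eta_0}\ge\delta$ is redundant. Your choice $\epsilon=\min\{\delta,D^{-\eta}\delta^{\eta_0}\}$ already covers it, and the ``trivial'' argument there still uses $f_+$ for the upper side.
\item The inequality $(1+|k|D)^{-\eta}\le(|k|D)^{-\eta}$ holds for every $D>0$. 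So the hypothesis $D\ge a_1$ plays no role in this lemma; the paper uses it later to bound $D^{-\eta}$ by a constant.
\item For a union of arcs, first merge overlapping arcs so that additivity over disjoint pieces applies. Each piece has length at most $\delta\le1/2$.
\end{itemize}
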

\begin{proof}
The nonconstant Fourier terms are bounded by
$CD^{-\eta}\sum_{h\ge1}\min\{\delta,h^{-1}\}h^{-\eta}
\ll D^{-\eta}\delta^{\eta_0}$.
The same estimate for smooth arc majorants, followed by shrinking the
arc, shows that the projected measure has no atoms.  Fej\'er
approximation justifies the calculation for the arc indicators.
\end{proof}

\subsection{Local averages}
For the rest of the paper, choose a nonnegative radial kernel
$\Phi\in C_c^\infty(\R^d)$ with integral one, supported in $B_2$
and nonincreasing in the radius.  Put
\[
 \Phi_T(u)=T^d\Phi(Tu),
 \qquad \ell=\log N,\qquad \Lambda=e^{\ell^6}.
\]
Convolution with $\Phi_T$ is a positive mixture of normalized ball
averages with radii at most $T^{-1}$; the mixture mass on radii below
$r$ is $O((Tr)^d)$, by integration of the radial derivative of $\Phi$.
Its Fourier multiplier satisfies
$|1-\widehat\Phi(\xi)|\ll\min\{1,\|\xi\|^2\}$.
For any measurable periodic $0\le f\le1$ of Haar mean $v$,
\begin{equation}\label{eq:single-spatial-marginal}
 \int_{\R^d}\Phi_T(x-y)f(ay+\theta)\,dy=v+O(T/a)
 \qquad(a>0),
\end{equation}
uniformly in $x,\theta$.  To prove this, write the integral with kernel
$G(y)=\Phi_T(x-y)$, translate $y$ by $u$, and replace $G(y+u)$ by
$G(y)$ at cost at most
$\|\nabla G\|_{L^1(dy)}\|u\|\ll T\|u\|$.
Averaging $u$ over $[0,a^{-1}]^d$ replaces the periodic factor by $v$.

For a finite exponential sum $P=\sum_\xi c_\xi e^{2\pi i\xi\cdot x}$,
with real vector frequencies, collect equal frequencies and put
$\|P\|_{\mathcal W}=\sum_\xi|c_\xi|$ and
$\operatorname{spec}P=\{\xi:c_\xi\ne0\}$.  The Wiener norm is
submultiplicative, and the spectrum of a product is contained in the
sum of the spectra of its factors.

\subsection{Trigonometric approximation of avoidance events}
Throughout this subsection targets have volume $O(s)$, where
$s\asymp\ell/N$, satisfy
\eqref{eq:uniform-boundary}, and may vary with the index and carry
arbitrary prescribed phase shifts.

\begin{lemma}\label{lem:smoothing}
For every $B\ge2$, each target $J$ admits a trigonometric polynomial
$\psi_J$ with $0\le\psi_J\le2$ and mean $\lambda_d(J)+O(N^{-B})$.
Its frequency radius and Wiener norm are at most $N^{C_B}$, and
it satisfies
\begin{equation}\label{eq:smooth-measure}
 \int|\psi_J(ax+\theta)-\1_J(ax+\theta)|\,d\mu(x)
 \ll N^{-B}+|a|^{-\eta}N^{C_B}\qquad(a\ne0).
\end{equation}
For fixed-body translates these polynomials may be chosen so that
\begin{equation}\label{eq:smooth-target}
 \int_{\T^d}|\psi_t(y)-\1_{J_t}(y)|\,dt\ll N^{-B}
 \qquad(y\in\T^d).
\end{equation}
\end{lemma}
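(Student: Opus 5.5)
The plan is to build $\psi_J$ in three stages: mollify the indicator at a polynomially small scale, truncate the Fourier series, and then correct the result so that it satisfies $0\le\psi_J\le2$.

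\emph{Mollification.} Fix $\delta=N^{-B'}$ with $B'$ large in terms of $B$. Let $\chi$ be the indicator of the $\delta$-neighbourhood of $J$ in $\T^d$, and set $g=\chi*\Phi_{1/\delta}$, periodized. Then $0\le g\le1$ and $g=1$ on $J$. Also $g$ differs from $\1_J$ only on the $2\delta$-layer around $\partial J$, which has Haar measure $O(\delta)$ by \eqref{eq:uniform-boundary}. Hence the mean of $g$ is $\lambda_d(J)+O(\delta)$.

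\emph{Truncation and positivity.} Since $\Phi$ is smooth, $|\widehat g(k)|\le|\widehat\chi(k)|\,|\widehat\Phi(\delta k)|\ll_A(1+\delta\|k\|)^{-A}$. Truncating to $\|k\|\le R=N^{C}$ gives a trigonometric polynomial $P$ with $\|g-P\|_\infty\le N^{-2B'}$. Its Wiener norm is at most $\sum_{\|k\|\le R}|\widehat g(k)|\le R^d$. To get nonnegativity and the cap, take $\psi_J=P+N^{-2B'}$. This satisfies $0\le\psi_J\le1+2N^{-2B'}\le2$, has mean $\lambda_d(J)+O(\delta)$, and has frequency radius and Wiener norm $N^{C_B}$.

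\emph{Bounding \eqref{eq:smooth-measure}.} Write
\[
|\psi_J-\1_J|\le|\psi_J-g|+|g-\1_J|\le 2N^{-2B'}+\1_{\mathcal L},
\]
where $\mathcal L$ is the $2\delta$-boundary layer. The step I expect to be the main obstacle is controlling $\mu\{x:ax+\theta\in\mathcal L\}$ for a singular $\mu$, because $\mathcal L$ is only measurable. To handle it, majorize $\1_{\mathcal L}$ by a smooth function $\phi$: the indicator of the $4\delta$-layer convolved with $\Phi_{1/\delta}$, then Fourier-truncated exactly as above. The majorant $\phi$ has Haar mean $O(\delta)$, Wiener norm $N^{C}$, and frequency radius $N^{C}$. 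Expanding $\phi(ax+\theta)$ in its Fourier series and integrating against $\mu$, the zero mode contributes $O(\delta)$. Each nonzero mode $k$ contributes at most $|\widehat\mu(ak)|\le C_\mu(1+|a|\|k\|)^{-\eta}\le C_\mu|a|^{-\eta}$, since $\|k\|\ge1$. Summing with the Wiener-norm bound gives $O(\delta)+|a|^{-\eta}N^{C}$. Choosing $B'\ge B$ yields \eqref{eq:smooth-measure}. The phase $\theta$ only multiplies the coefficients by unimodular factors, so the bound is uniform in $\theta$. It is also uniform in the target, since only \eqref{eq:uniform-boundary} was used.

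\emph{Fixed-body translates, \eqref{eq:smooth-target}.} For $J_t=t+\operatorname{int}\mathcal C_s$, build $\psi_0$ for $J_0$ as above and set $\psi_t(y)=\psi_0(y-t)$. This preserves the frequency radius and Wiener norm. Then
\[
\int_{\T^d}|\psi_t(y)-\1_{J_t}(y)|\,dt=\int_{\T^d}|\psi_0(u)-\1_{J_0}(u)|\,du\le 2N^{-2B'}+\lambda_d(\mathcal L_0).
\]
By \eqref{eq:boundary} this is $\ll s^{(d-1)/d}\delta\ll N^{-B}$, because $\delta\ll s^{1/d}$ holds once $N$ is large.
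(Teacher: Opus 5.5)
Your proposal is correct and follows essentially the same route as the paper. You mollify at a polynomially small scale, truncate the Fourier series, add a small constant for positivity, control the error by a smooth majorant of the boundary layer whose nonzero modes are handled by the Fourier decay of $\mu$, and translate a single polynomial for the fixed-body case. The only differences are cosmetic: you mollify the indicator of the $\delta$-neighbourhood of $J$ rather than $\1_J$ itself, and your truncated layer majorant $\phi$ dominates $\1_{\mathcal L}$ only up to the additive truncation error $N^{-2B'}$, which is harmless.
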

\begin{proof}
Convolve $\1_J$ with a smooth probability kernel of radius
$\delta=N^{-B}$, truncate its Fourier series with uniform error at
most $\delta$, and add $\delta$ to ensure positivity.  Derivative
bounds for the mollifier give both stated polynomial bounds by choosing
a sufficiently large polynomial cutoff.  The pointwise error is at
most $2\delta$ outside the $\delta$-boundary layer and at most two
inside it.  A smooth majorant of that layer has mean $O(\delta)$,
by \eqref{eq:uniform-boundary}, and polynomial Wiener norm.  Its
nonzero modes, estimated by Fourier decay, prove
\eqref{eq:smooth-measure}.  Translating one fixed polynomial and
integrating in $t$ prove \eqref{eq:smooth-target}.
\end{proof}

\begin{proposition}\label{prop:avoidance}
For every $A>0$ there is $C_*>0$ such that, for
$n_* =\lceil C_*\log N\rceil$ and any
$\mathcal I\subset\{n_*,\ldots,N\}$, the avoidance indicator
$R=\prod_{n\in\mathcal I}(1-\1_{J_n}(a_nx+\theta_n))$
has a finite exponential-sum approximation $P_R$ with
\begin{equation}\label{eq:avoidance-approx}
 \|R-P_R\|_2\le N^{-A},\qquad
 \|P_R\|_{\mathcal W}\le e^{C_A\ell^4},\qquad
 \operatorname{spec}P_R\subset\{\|\xi\|\le N^{C_A}a_m\},
\end{equation}
where $m=\max(\mathcal I\cup\{1\})$.  If $H=1-R$ uses only frequencies $a_n\ge a_-$ and $a_-/T\ge\Lambda$,
the same construction can be made to satisfy, for $P_H=1-P_R$,
\begin{equation}\label{ac:convolution-approx}
 \|\Phi_T*(P_H-H)\|_\infty\le N^{-A}.
\end{equation}
By inclusion--exclusion, the same bounds hold for unions of two
targets and products of two hit indicators.
\end{proposition}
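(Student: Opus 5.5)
Replace each factor $1-\1_{J_n}(a_nx+\theta_n)$ by $1-\psi_n(a_nx+\theta_n)$, with $\psi_n=\psi_{J_n}$ from \cref{lem:smoothing} (exponent $B$ large in terms of $A$), and expand the product. The obstacle is the Wiener norm. A naive product of $|\mathcal I|$ factors, each of Wiener norm $N^{C_B}$, has norm $e^{C\ell^2|\mathcal I|}$, which is far too large. The plan is to exploit the smallness of the targets: $R$ is close to a truncated inclusion--exclusion expansion, because few indices hit simultaneously. Concretely, set $X=\sum_{n\in\mathcal I}\1_{J_n}(a_nx+\theta_n)$, so that $R=\1_{\{X=0\}}$. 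Write $R=\sum_{k\ge0}(-1)^k e_k$, where $e_k$ is the elementary symmetric polynomial of the hit indicators. Bonferroni inequalities show that truncating at $k\le K$ changes $R$ only on $\{X> K\}$, and there by at most $\binom{X}{K+1}$. Taking $K=C\ell$ with $C$ large, the $L^2$ error is controlled by $\mathbb E\binom{X}{K+1}^2$, which will be shown to be $N^{-2A}$. Indeed, $\mathbb E X\asymp Ns\asymp\ell$, and the correlation of distinct lacunary dilates, using Fourier decay for frequencies $a_n\ge a_{n_*}\ge N^{C_*}$, makes the $k$-fold products behave like products of marginals. This gives moment bounds of Poisson type, $\mathbb E\binom{X}{k}\le (C\ell)^k/k!+\text{error}$, which are tiny once $k\gg e\,C\ell$.

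Next, replace each $e_k$ ($k\le K$) by the corresponding symmetric polynomial in the $\psi_n$. Each product of $k$ smoothed indicators has Wiener norm at most $N^{C_Bk}$. The number of $k$-subsets is at most $N^k$, so the Wiener norm of the whole truncated expansion is at most $\sum_{k\le K}N^{(C_B+1)k}\le e^{C\ell\cdot C_B'\ell}=e^{O(\ell^2)}$. This is comfortably within $e^{C_A\ell^4}$; the extra room is available for a dyadic grouping of frequencies if needed. The spectrum of the product is contained in sums of at most $K$ vectors of norm $\le N^{C_B}a_n$, hence in $\{\|\xi\|\le K N^{C_B}a_m\}\subset\{\|\xi\|\le N^{C_A}a_m\}$. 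The error from replacing indicators by $\psi$ in each monomial is handled by telescoping. One factor at a time is swapped, using $0\le\psi\le2$ and \eqref{eq:smooth-measure}. Since $a_n\ge a_1q^{n_*-1}\ge N^{C_*\log q/2}$, the term $a_n^{-\eta}N^{C_B}$ is $\le N^{-B}$ once $C_*$ is large. The resulting $L^1$ error is at most $K\cdot 2^K N^{-B}$ per monomial, summed over $N^K$ monomials. This is too large as stated. The fix is to do the swap at the level of $L^2$ estimates of $e_k$: bound $\mathbb E|e_k(\psi)-e_k(\1)|^2$ by $\sum$ over pairs of $k$-sets, where Fourier decay again gives near-independence. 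The decisive fact is that $e_k(\psi)$ has mean $\approx(\mathbb EX)^k/k!$ rather than $N^k$, so errors must be measured relative to that; choosing $B\gg C_*\cdot K/\ell$ then closes it. The $L^2$ bounds come from $L^1$ bounds via uniform boundedness $|e_k|\le e^{O(\ell^2)}$, paid for by $N^{-B}$.

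For \eqref{ac:convolution-approx}, use the same polynomial and bound $\Phi_T*(P_H-H)$ pointwise. Convolution with $\Phi_T$ at scale $T\le a_-\Lambda^{-1}$ averages each monomial over a ball containing $\ge\Lambda$ periods of the smallest frequency in it. The argument of \eqref{eq:single-spatial-marginal}, applied iteratively from the lowest frequency upward, reduces the local average of each product to the product of Haar means plus $O(T/a_-)$ per factor. This is an error of size $e^{-\ell^6}e^{O(\ell^2)}\le N^{-A}$, uniform in $x$, and the truncation tail is handled likewise, uniformly. Finally, the statements for unions of two targets and products of two hit indicators follow by writing $\1_{J\cup J'}=\1_J+\1_{J'}-\1_J\1_{J'}$ and multiplying the approximants, which at most squares the Wiener norm and doubles the frequency radius. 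The main obstacle is the moment (near-independence) bound for $\binom{X}{k}$ with $k\asymp\ell$ under a general Fourier-decaying $\mu$. I would first try to get it by expanding each $k$-fold product in Fourier modes with the lacunary frequencies, showing that nonzero combined frequencies are $\gg N^{C_*}$ in size except when some coefficient cancellation occurs. Such cancellations are limited by lacunarity for bounded coefficients. If this fails, I would fall back to the $\Lambda$-separated block decomposition, in which only well-separated indices are grouped.
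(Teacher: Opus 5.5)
There is a genuine gap, exactly at the step you flagged. The Poisson-type factorial-moment bounds are not merely hard to prove for lacunary dilates; they are false, and truncated inclusion--exclusion cannot work without them.

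Here is a counterexample. Take $d=1$, $a_n=2^n$, $\mu$ Lebesgue measure on $[0,1]$, $\theta_n=0$ and $J_n=(-s/2,s/2)\bmod1$ for every $n$; the statement allows arbitrary targets and phases. If $\|2^{n_*}x\|_{\R/\Z}<2^{-2K-1}s$, then $2^{n_*+j}x\bmod1\in J_n$ for $0\le j\le2K$. So $X\ge2K+1$ on a set of measure $2^{-2K}s$. On $0/1$ inputs your truncation $\sum_{k\le K}(-1)^ke_k$ equals $(-1)^K\binom{X-1}{K}$ whenever $X\ge1$, and that is exactly its error. Hence
\[
 \mathbb E\binom{X-1}{K}^2\ge2^{-2K}s\binom{2K}{K}^2\gg\frac{s\,4^K}{K}.
\]
This is never below a constant times $s$, so it fails for every $A>1/2$. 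It is even a positive power of $N$ once $K=C\ell$ with $C>1/\log4$.

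The underlying issue is that bounded-coefficient frequency relations create clusters of hits whose size has only a geometric tail. A Bonferroni truncation, being small only on $[1,K]$, blows up combinatorially just beyond $K$. Regrouping frequencies into separated blocks cannot remove clusters that are really there.

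Your convolution step has the same flaw. Iterating \eqref{eq:single-spatial-marginal} factorizes a local average of a product only when each frequency exceeds the previous one by much more than the frequency radius $N^{C_B}$ of the $\psi_n$. For the target above, the local average of $\1_J(ax)\1_J(2ax)$ is about $s/2$, not $s^2$.

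There is also an independent problem. Swapping indicators for $\psi_n$ in an expansion with $N^K$ monomials costs a factor $e^{O(\ell^2)}=N^{O(\log N)}$. No fixed accuracy $N^{-B}$ from \cref{lem:smoothing} can pay for that.

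The missing idea is to approximate $\1_{\{X=0\}}$ by a polynomial in the single function $Z=\sum_n\psi_n(a_nx+\theta_n)$. This polynomial should be tiny on a long range and grow slowly afterwards, and $Z$ is controlled only through an exponential moment, which does survive clustering.

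The paper first proves $\mathbb E_\mu e^{\sigma Z}\le C$ with $\sigma\asymp\ell^{-1}$:
\begin{itemize}
\item split $\mathcal I$ into $r\asymp\ell$ residue classes in which successive frequencies have ratio at least $Q>2N^{C_B}+1$;
\item expand $\prod(1+b_0\psi_n)$ in Fourier modes within a class, where every nonconstant mode has norm at least half the largest participating frequency, so Fourier decay together with $n\ge n_*$ makes it negligible;
\item apply H\"older across the classes.
\end{itemize}

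It then takes the rescaled Chebyshev polynomial $P(z)=T_k(c-2z/(M-\frac12))/T_k(c)$ with $k\asymp\ell^3$ and $M=16k/\sigma\asymp\ell^4$. It satisfies $P(0)=1$, $0\le P\le1$ on $[0,\frac12]$, $|P|\le2e^{-k/\sqrt M}$ on $[\frac12,M]$, and $|P(z)|^2\le e^{\sigma z/2}$ for $z>M$. The smoothing error then enters only through $\mathbb E_\mu|X-Z|$ times a Lipschitz factor $O(k^2/M)$. The range $Z>M$ is absorbed by the exponential moment.

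The Wiener-norm bound $[C(1+\|Z\|_{\mathcal W}/M)]^k=e^{O(\ell^4)}$ is where the $\ell^4$ in the statement comes from. Your anticipated $e^{O(\ell^2)}$ reflects a polynomial of too low a degree to work.

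For \eqref{ac:convolution-approx}, the same two estimates are rerun with the probability measure $\Phi_T(x-y)\,dy$ in place of $\mu$. Its Fourier transform decays faster than any power at scale $T\le a_-/\Lambda$.
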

\begin{proof}
If $\mathcal I=\varnothing$, take $P_R=1$.
Otherwise, for each $n\in\mathcal I$ let $\psi_n=\psi_{J_n}$
be supplied by \cref{lem:smoothing}, and set
$X=\sum_{n\in\mathcal I}\1_{J_n}(a_nx+\theta_n)$ and
$Z=\sum_{n\in\mathcal I}\psi_n(a_nx+\theta_n)$. Fix $D>2A+4$.
Choose $B$ and then $C_*$ sufficiently large that
\cref{lem:smoothing} and \eqref{eq:growth} give
$\mathbb E_\mu|X-Z|\le N^{-D}$.
We first prove
\begin{equation}\label{eq:exponential}
 \mathbb E_\mu e^{\sigma Z}\le C,
 \qquad\sigma\asymp\ell^{-1}.
\end{equation}
Choose fixed $t_0,b_0>0$ with $e^{t_0\psi_n}\le1+b_0\psi_n$.
Split the indices into $r\asymp\ell$ residue classes so that successive
frequencies in a class have ratio at least $Q$, where
$Q>2N^{C_B}+1$ and $Q^{-\eta}(1+b_0N^{C_B})<1/2$.
In a nonempty class write its frequencies as $v_1<\cdots<v_M$.
In a Fourier product whose last nonzero mode occurs at $j$, the
frequency norm is at least $v_j/2$: the last integer mode has norm
at least one and the preceding modes have sum smaller than $v_j/2$.
If $p=\max_n\int\psi_n=O(s)$, all nonconstant contributions in the
expanded product have absolute sum at most
\[
 C v_1^{-\eta}b_0N^{C_B}(1+b_0p)^M
 \sum_{j\ge1}[Q^{-\eta}(1+b_0N^{C_B})]^{j-1}
 \le (1+b_0p)^M,
\]
after increasing $C_*$.  The constant contribution is at most the
same quantity.  H\"older over the classes, with
$\sigma=t_0/r$, gives
$\mathbb E_\mu e^{\sigma Z}\le2e^{CNp/r}\le C$.
The bounds are uniform in the phases, since absolute values were taken
before collecting coincident frequencies.

Let $k=\lceil K_A\ell^3\rceil$, $M=16k/\sigma$, and
$c=(M+1/2)/(M-1/2)$.  In terms of the Chebyshev polynomial
$T_k(\cos u)=\cos(ku)$, put
\[
 P(z)=\frac{T_k(c-2z/(M-1/2))}{T_k(c)}.
\]
The identities $T_k(\cosh u)=\cosh(ku)$ and
$T_{k+1}(u)=2uT_k(u)-T_{k-1}(u)$ give
\[
 P(0)=1,\quad 0\le P(z)\le1\ (0\le z\le1/2),\quad
 |P(z)|\le2e^{-k/\sqrt M}\ (1/2\le z\le M).
\]
On $[0,1/2]$, differentiation gives
$|1-P(z)|\le4k^2z/M$, since $T_k'(u)/T_k(u)\le k^2$ for $u\ge1$.
For $z>M$, $|P(z)|^2\le(8z/M)^{2k}\le e^{\sigma z/2}$.
Consequently, splitting these three ranges and using that $X$ is a
nonnegative integer, we obtain
\[
 \mathbb E_\mu|\1_{\{X=0\}}-P(Z)|^2
 \ll (1+k^2/M)\mathbb E_\mu|X-Z|
       +e^{-2k/\sqrt M}+e^{-\sigma M/2}.
\]
On $Z\le1/2$, the cases $X=0$ and $X\ge1$ are bounded
by $4k^2|X-Z|/M$ and $2|X-Z|$, respectively; for $Z>M$, use
\eqref{eq:exponential}.  Since $k^2/M=O_A(\ell^2)$ and
$k/\sqrt M\asymp\sqrt{k\sigma}$, choosing $K_A$ sufficiently
large gives the required $L^2$ accuracy.
The recurrence for $T_k$ also gives
$\|P(Z)\|_{\mathcal W}\le[C(1+\|Z\|_{\mathcal W}/M)]^k
\le e^{C_A\ell^4}$.  Its frequencies are sums of at most $k$
frequencies of $Z$, proving the spectral bound.

For \eqref{ac:convolution-approx}, fix a center $x$ and use the
probability measure $\Phi_T(x-y)\,dy$ in place of $\mu$.  Its Fourier
transform has modulus $|\widehat\Phi(\xi/T)|$, and
$|\widehat\Phi(\zeta)|\ll_M(1+\|\zeta\|)^{-M}$ for every $M>0$.
In each separated residue class, a nonzero sum of Fourier modes has
norm at least half the largest participating frequency, hence at least
$a_-/2\ge T\Lambda/2$.  The smoothing-error and exponential-moment
estimates above therefore hold for these local measures, uniformly
in the center and phases, with arbitrary inverse powers of $\Lambda$
in place of Fourier-decay errors.  Choose the smoothing and cutoff
parameters for both estimates together.  The same polynomial then has
$L^1(\Phi_T(x-\cdot))$ error at most $N^{-A}$ for every $x$, proving
\eqref{ac:convolution-approx}.  Convolution multiplies Fourier
coefficients by $\widehat\Phi(\xi/T)$ and does not increase the Wiener norm.
\end{proof}

\subsection{Weights, transfer, and inserted hits}
As in Section~\ref{sec:one-dimensional}, we use products of compensated
survival factors $(1-H_b)(1+h_b)$.  The transfer estimate must also
apply to products for two targets, as needed in the lower-bound second
moment, and to products containing an inserted incidence indicator.
Fix a target $J\subset\T^d$ of Haar measure $s$.
Let the active blocks $\mathcal I_b$ have lengths $L_b$, with
$g=\lceil C_{\rm gap}\log\Lambda\rceil$ omitted indices before the
first and between successive blocks.  Put
\[
 a_b^-=\min_{n\in\mathcal I_b}a_n,\quad
 a_b^+=\max_{n\in\mathcal I_b}a_n,\quad a_0^+=1,\quad
 T_b=a_{b-1}^+\Lambda^2.
\]
\[
 H_b=\1_{\{\exists n\in\mathcal I_b:a_nx\bmod1\in J\}},\qquad
 h_b=\Phi_{T_b}*H_b.
\]
Choose $C_{\rm gap}\log((q+1)/2)>4$.  Then $a_b^-/T_b\ge\Lambda$
for all large $N$, including after the slight frequency shifts used
below.  For equal-length blocks, write $L_b=L$ and omit the last
incomplete block.
By \eqref{eq:single-spatial-marginal} and the union bound,
\begin{equation}\label{ac:smoothed-marginal}
 0\le h_b\le p_b:=L_b(s+e_N),\qquad e_N=O(\Lambda^{-1}).
\end{equation}
We always use blocks for which $p_b\le p_*<1/2$ and
$\sum_bp_b=O(\ell)$.

\begin{lemma}\label{ac:slowweights}
Let $J_1,J_2\subset\T^d$ be targets of Haar measure at most $s$,
satisfying \eqref{eq:uniform-boundary} with a uniform constant.
Let $\mathcal B_1,\mathcal B_2$ be
sets of completed block indices, either of which may be empty.
For $i=1,2$, write $H_{j,i}$ for the hit indicator of $J_i$ in block $j$
and $h_{j,i}=\Phi_{T_j}*H_{j,i}$.  Set
\[
 R=\prod_{i=1}^2\prod_{j\in\mathcal B_i}(1-H_{j,i}),
 \qquad V=\prod_{i=1}^2\prod_{j\in\mathcal B_i}(1+h_{j,i}),
 \qquad F=cRV,\quad |c|\le N^C.
\]
Let
\[
 a_* = \max\bigl(\{1\}\cup
       \{a_n:n\in\mathcal I_j,\ j\in\mathcal B_1\cup\mathcal B_2\}\bigr).
\]
As in Section~\ref{sec:one-dimensional}, $a_*$ records the frequencies
of the preceding block factors; without inserted indicators, it equals $1$
for an empty block product. We have $\|F\|_\infty\le N^{C'}$, and for every $A>0$
there is a finite exponential sum $P_F$ with
\begin{equation}\label{ac:slowweightbounds}
 \|F-P_F\|_2\le N^{-A},\qquad
 \|P_F\|_{\mathcal W}\le e^{C_A\ell^5},\qquad
 \operatorname{spec}P_F\subset\{\|\xi\|\le N^{C_A}a_*\}.
\end{equation}
The same assertions hold after inserting a bounded number of indicators
$\1_E(Dx+\theta)$, each having bounded trigonometric approximations
of arbitrary inverse-power $L^1(\mu)$ accuracy, polynomial Wiener norm,
and spectral radius at most $N^{C_A}|D|$. In this case, enlarge $a_*$
by the absolute frequencies of the indicators already included in $F$;
a new indicator in \eqref{ac:periodic-insertion} is not yet included.
If the input approximation errors are controlled only after integrating
their squared $L^2(\mu)$ norms over target parameters, the $L^2$ bound
in \eqref{ac:slowweightbounds} is understood after the same integration.
The coefficient, spectral, and sup-norm bounds remain uniform.
The conclusions hold uniformly for any choice of averaging scales
$T_j>0$ satisfying $a_j^-/T_j\ge\Lambda$ for all blocks involved,
provided the above bounds on $p_j$ hold.
\end{lemma}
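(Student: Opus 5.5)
We approximate the two factors $R$ and $V$ separately and multiply the approximants, using that the Wiener norm is submultiplicative and that spectra add. The sup-norm bound is immediate: $0\le R\le1$, and $V\le\prod_{i}\prod_{j}(1+p_j)\le\exp(2\sum_jp_j)=e^{O(\ell)}=N^{O(1)}$ by \eqref{ac:smoothed-marginal} and $\sum_bp_b=O(\ell)$. Hence $\|F\|_\infty\le N^{C'}$ once $|c|\le N^C$ is included.

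\emph{The avoidance factor.} $R$ is an avoidance indicator for the union of index sets $\bigcup_{j\in\mathcal B_1}\mathcal I_j$ (target $J_1$) and $\bigcup_{j\in\mathcal B_2}\mathcal I_j$ (target $J_2$). Indices below $n_*=\lceil C_*\log N\rceil$ can occur only in the first $O(1)$ blocks after the gap $g\gg\log N$. Since the first gap has length $g\ge n_*$ for large $N$, all indices lie in $\{n_*,\dots,N\}$. We therefore apply \cref{prop:avoidance}, with its remark on two targets, to $R_1=\prod_{j\in\mathcal B_1}(1-H_{j,1})$ and $R_2$ separately. This gives $P_{R_i}$ with $L^2$ error $N^{-A'}$, Wiener norm $e^{C\ell^4}$ and spectrum in $\{\|\xi\|\le N^{C}a_*\}$. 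The product $R_1R_2$ is approximated by $P_{R_1}P_{R_2}$. For the error we write $R_1R_2-P_{R_1}P_{R_2}=(R_1-P_{R_1})R_2+P_{R_1}(R_2-P_{R_2})$. The second term is not controlled in $L^2$ by $\|P_{R_1}\|_\infty\le e^{C\ell^4}$ alone, which is too large. The plan is instead to truncate: replace $P_{R_i}$ by $\min$-type clipping, or better, note that the proposition's polynomial $P(Z)$ is bounded by $1$ on $0\le Z\le M$ and the tail $Z>M$ has tiny measure by \eqref{eq:exponential}. Alternatively, apply \cref{prop:avoidance} once to the joint product, treating the family $\{(a_n,J_n,\theta_n)\}$ as a single list with repeated frequencies. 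Repeated frequencies split into two residue-class systems, and the exponential-moment argument goes through with $r$ doubled. I will take this second route, which avoids the product issue for $R$.

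\emph{The compensation factor.} Each $h_{j,i}=\Phi_{T_j}*H_{j,i}$ with $a_j^-/T_j\ge\Lambda$, so \eqref{ac:convolution-approx} gives $Q_{j,i}=\Phi_{T_j}*P_{H_{j,i}}$ with $\|h_{j,i}-Q_{j,i}\|_\infty\le N^{-A'}$, Wiener norm $e^{C\ell^4}$, and spectrum at most $N^Ca_j^+\le N^Ca_*$. The product $\prod(1+Q_{j,i})$ would have Wiener norm exponential in the number of blocks, which is far too large. We therefore write $V=\exp\bigl(\sum\log(1+h_{j,i})\bigr)$. Here $\Sigma=\sum_{i,j}h_{j,i}\in[0,O(\ell)]$, and $\sum h^2=O(\sum p_j^2)$ is small. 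We approximate $\log(1+u)$ on $[0,p_*]$ by a polynomial of degree $O(\ell)$ to accuracy $N^{-A'}$. We then approximate $\exp$ on $[0,O(\ell)]$ by its Taylor polynomial of degree $O(\ell)$. For the inner sums we substitute $Q_{j,i}$. The sup-norm errors propagate with Lipschitz factors $e^{O(\ell)}=N^{O(1)}$, which is harmless after enlarging $A'$. The Wiener norm becomes roughly $(\sum_j\|Q_j\|_{\mathcal W})^{O(\ell)}$. Each $\|Q_j\|_{\mathcal W}\le e^{C\ell^4}$ and there are at most $N$ blocks, so the degree-$O(\ell)$ composition yields $e^{C\ell^5}$; this is the source of the exponent $5$. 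Spectra remain within $N^{C}a_*$ because the polynomial degrees are $O(\ell)$ and $N^{C}$ raised to a fixed power stays polynomial. I expect this step to be the main obstacle. The difficulty is choosing the degrees so that the Wiener norm stays at $e^{C\ell^5}$ while the uniform error is $N^{-A}$. The delicate point is the range of $\Sigma$ after replacing $h$ by $Q$, since $Q$ may leave $[0,p_*]$ slightly; the $N^{-A'}$ sup-closeness handles this.

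\emph{Combining and extensions.} We set $P_F=c\,P_RP_V$. Then $F-P_F=c(R-P_R)V+cP_R(V-P_V)$. The first term is bounded by $N^{C+C'}\|R-P_R\|_2$. For the second, $\|V-P_V\|_\infty$ is tiny, and $\|P_R\|_2\le1+N^{-A'}$. So we may choose $A'=A+O(1)$. Inserted indicators $\1_E(Dx+\theta)$ are handled as additional factors of the same type as $R$. We multiply by their given approximants, bounding the cross terms as above using their boundedness and the prescribed $L^1$, hence $L^2$ after clipping, accuracy. Their frequencies enter $a_*$. Averaged-in-parameter error hypotheses carry through, since every step is linear in the input errors. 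Finally, the argument uses the scales $T_j$ only through $a_j^-/T_j\ge\Lambda$ and the bounds on $p_j$, which gives the stated uniformity.
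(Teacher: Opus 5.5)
Your sup-norm bound and your handling of $R$ by a single application of \cref{prop:avoidance} agree with the paper. The paper uses the union target $J_1\cup J_2$ on blocks common to $\mathcal B_1$ and $\mathcal B_2$ rather than a doubled list, so the proposition need not be rerun. Your final splitting $F-P_F=c(R-P_R)V+cP_R(V-P_V)$ also matches.

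The gap is in the compensation factor $V$, at exactly the step you flagged: the Wiener-norm count you give there is wrong. Write $\pi_m$ for your degree-$m$ polynomial approximating $\log(1+u)$ and $E_K$ for the degree-$K$ Taylor polynomial of $\exp$. A bound of the form $(\sum_j\|Q_j\|_{\mathcal W})^{O(\ell)}$ holds for a polynomial of total degree $O(\ell)$ in the $Q_{j,i}$. But $E_K\bigl(\sum_{j,i}\pi_m(Q_{j,i})\bigr)$ has total degree $mK$.

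The hypotheses only give $p_j\le p_*<1/2$ with $p_*$ fixed. The truncation error of $\log(1+u)$, summed over blocks, is of order $p_*^m\sum_jh_j$, so indeed $m\asymp\ell$; and $K\asymp\ell$ is needed for $\exp$ on $[0,O(\ell)]$. A single $\pi_m(Q_{j,i})$ can then only be bounded by $e^{Cm\ell^4}=e^{C\ell^5}$ in Wiener norm. The composition therefore gives only $\|P_V\|_{\mathcal W}\le(1+2Ne^{C\ell^5})^K=e^{O(\ell^6)}$.

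This loss is not cosmetic. Since $\Lambda=e^{\ell^6}$, the proof of \cref{ac:transfer} needs $\|P_F\|_{\mathcal W}\,\Lambda^{-\eta}$ to be smaller than any power of $N$. That fails for $e^{C_A\ell^6}$ once $C_A\ge\eta$.

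The missing idea is to drop the logarithm and truncate the product expansion directly, using positivity of the $h_{j,i}$. Enumerate them as $h_1,\dots,h_{B'}$ with $B'\le2N$, and put $Q_K(z)=\sum_{\alpha\in\{0,1\}^{B'},\,|\alpha|\le K}z^\alpha$. Then
\[
 0\le V-Q_K(h)=\sum_{|\alpha|>K}h^\alpha
 \le2^{-K-1}\prod_j(1+2h_j)\le2^{-K-1}e^{2\sum_jh_j}\le2^{-K-1}N^{C'},
\]
so $K=C_A\lceil\ell\rceil$ gives accuracy $N^{-A}$.

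Since $Q_K$ is multilinear with nonnegative coefficients, on $\{|z_j|\le h_j+N^{-D}\}$ every $|\partial_jQ_K|$ is at most $\prod_l(1+h_l+N^{-D})\le N^{C''}$. Hence replacing each $h_j$ by $u_j=\Phi_{T_j}*P_{H_j}$ costs $O(N^{C''+1-D})$ uniformly. The total degree in the $u_j$ is now only $K$, so
\[
 \|Q_K(u)\|_{\mathcal W}\le\Bigl(1+\sum_j\|u_j\|_{\mathcal W}\Bigr)^K\le(1+2Ne^{C\ell^4})^{C_A\ell}=e^{C_A'\ell^5},
\]
with spectral radius at most $KN^{C}a_*$. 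Your log--exp route would give $e^{O(\ell^5)}$ if $p_*\le N^{-c}$, since then $m=O(1)$ suffices, but the lemma does not assume this.

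A smaller point: for inserted indicators, do not clip, because clipping destroys the exponential-sum structure. The hypothesis already supplies bounded approximants, and their boundedness converts $L^1(\mu)$ accuracy into $L^2(\mu)$ accuracy.
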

\begin{proof}
The local convolution approximation in \cref{prop:avoidance} requires
only $a_j^-/T_j\ge\Lambda$, and the one-point marginal estimate is
uniform under the same condition.
First $\|V\|_\infty\le\prod_j(1+p_j)^2\le N^{C'}$.
For each pair $(j,i)$, let $u_{j,i}=\Phi_{T_j}*P_{H_{j,i}}$.
By \cref{prop:avoidance},
$\|u_{j,i}-h_{j,i}\|_\infty\le N^{-D}$,
$\|u_{j,i}\|_{\mathcal W}\le e^{C_D\ell^4}$, and
$\operatorname{spec}u_{j,i}\subset\{\|\xi\|\le N^{C_D}a_j^+\}$.
Enumerate these pairs and write $h_1,\ldots,h_B$ and
$u_1,\ldots,u_B$ for the corresponding functions.  For
$\alpha\in\{0,1\}^B$, write
$|\alpha|=\sum_j\alpha_j$ and $z^\alpha=\prod_jz_j^{\alpha_j}$, and put
\[
 Q_K(z)=\sum_{\alpha\in\{0,1\}^{B}:\,|\alpha|\le K}z^\alpha.
\]
The positive product expansion gives
\[
 0\le V-Q_K(h)\le2^{-K-1}\prod_j(1+2h_j)
 \le2^{-K-1}\exp(2\sum_jh_j)\le2^{-K-1}N^{C'}.
\]
Take $K=C_A\lceil\ell\rceil$.  On $|z_j|\le h_j+N^{-D}$ each
partial derivative of $Q_K$ is bounded by
$\prod_l(1+h_l+N^{-D})\le N^{C''}$.
Thus $P_V=Q_K(u)$ approximates $V$ uniformly to accuracy $N^{-A}$
if $D$ is large, and
\[
 \|P_V\|_{\mathcal W}\le(1+\sum_j\|u_j\|_{\mathcal W})^K
 \le e^{C_A\ell^5}.
\]
Its spectral radius is at most $KN^{C_D}a_*$, hence at most a fixed
power of $N$ times $a_*$.
Use \cref{prop:avoidance} for $R$, with the union target for two copies,
and set $P_F=cP_RP_V$.  The $L^2$ error follows by replacing $R$
while keeping the actual bounded $V$, then replacing $V$ uniformly;
$\|P_R\|_2\le2$.  Products give the other two bounds.
For an inserted indicator, boundedness converts arbitrary $L^1$
accuracy into arbitrary $L^2$ accuracy.  Replace that indicator while
retaining the actual bounded weight, then multiply its bounded
polynomial approximation into $P_F$.  Integrating these squared
estimates proves the parameter-integrated version.
\end{proof}

\begin{lemma}\label{ac:transfer}
For a preceding weight $F$ as in \cref{ac:slowweights}, let $H$ be the
hit indicator for the current block, or the product of two such
indicators, and put
$h=\Phi_T*H$.  If $T\ge a_*\Lambda^2$ and the first current
frequency is at least $T\Lambda$, then, for every $A>0$,
\begin{equation}\label{ac:transferidentity}
 \int FH\,d\mu=\int Fh\,d\mu+O_A(N^{-A}).
\end{equation}
For an additional target $E$ satisfying \eqref{eq:uniform-boundary},
with $\lambda_d(E)=v=O(s)$ and
$|D|\ge a_*\Lambda^2$, we also have
\begin{equation}\label{ac:periodic-insertion}
 \int F(x)\1_E(Dx+\theta)\,d\mu
 =v\int F\,d\mu+O_A(N^{-A}).
\end{equation}
The estimates also hold with the inserted indicators allowed in
\cref{ac:slowweights}.  If their approximation errors are integrated
over target parameters, the identities are integrated over the same
parameters.
\end{lemma}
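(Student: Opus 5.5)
Replace $F$ by its exponential-sum approximation $P_F$ from \cref{ac:slowweights} and compute each frequency contribution separately. The one-dimensional proof of \cref{lem:od-transfer} used translation averaging against Lebesgue measure; for $\mu$ we use Fourier decay instead.

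\emph{Reduction to $P_F$.} Since $0\le H,h\le1$, Cauchy--Schwarz gives
\[
 \Bigl|\int (F-P_F)(H-h)\,d\mu\Bigr|\le 2\|F-P_F\|_2\le 2N^{-A'},
\]
so it suffices to bound $\int P_F(H-h)\,d\mu$. By \eqref{ac:slowweightbounds}, $P_F=\sum_\xi c_\xi e^{2\pi i\xi\cdot x}$ with $\sum|c_\xi|\le e^{C\ell^5}$ and $\|\xi\|\le N^{C}a_*$.

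\emph{Approximating the current block.} Apply \cref{prop:avoidance} to the current block, with accuracy $N^{-A''}$, to obtain $P_H$. By \eqref{ac:convolution-approx}, which applies because $a_-/T\ge\Lambda$, we have $\|\Phi_T*(P_H-H)\|_\infty\le N^{-A''}$, so $h$ may be replaced by $\Phi_T*P_H$ uniformly. The $L^2(\mu)$ bound lets us replace $H$ by $P_H$, at cost $\|P_F\|_\infty\|H-P_H\|_2$. To control this cost, bound $\|P_F\|_\infty\le e^{C\ell^5}$ and choose $A''$ large relative to it. This is legitimate because $e^{C\ell^5}=e^{C(\log N)^5}$ is subpolynomial in $\Lambda=e^{\ell^6}$ but not in $N$. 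The trouble is that the $N^{-A}$ accuracy of \cref{prop:avoidance} may not beat $e^{C\ell^5}$. I therefore plan to retain the true bounded $F$ in this step. Write
\[
 \int F(H-P_H)\,d\mu=\int (F-P_F)(H-P_H)\,d\mu+\int P_F(H-P_H)\,d\mu.
\]
The first term is $O(N^{-A'})$ by Cauchy--Schwarz. For the second, use the $L^1$ form: $P_H$ is built from $\psi$'s whose $L^1(\mu)$ error against the indicators is small by \eqref{eq:smooth-measure}, and $P_F$ is a finite exponential sum of controlled Wiener norm. This step, balancing Wiener norms $e^{C\ell^5}$ against inverse-power accuracies, is the main bookkeeping obstacle. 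I expect it to close because the spectral separation gives errors in powers of $\Lambda^{-1}$, which dominate every $e^{C\ell^5}$.

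\emph{Frequency cancellation.} After these replacements we must show $\int P_F\,(P_H-\Phi_T*P_H)\,d\mu$ is small. Expand $P_H=\sum_\zeta b_\zeta e^{2\pi i\zeta\cdot x}$. The operator $1-\Phi_T*$ multiplies mode $\zeta$ by $1-\widehat\Phi(\zeta/T)$, which vanishes at $\zeta=0$. Each nonconstant mode of $P_H$ has $\|\zeta\|\ge a_-/2\ge T\Lambda/2$; this is the residue-class separation already used in \cref{prop:avoidance}. Meanwhile $\|\xi\|\le N^Ca_*\le N^CT\Lambda^{-2}$. Hence every cross frequency satisfies $\|\xi+\zeta\|\ge T\Lambda/4\ge\Lambda$, and \eqref{eq:fourier-decay} gives $|\widehat\mu(-(\xi+\zeta))|\ll\Lambda^{-\eta}$. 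The total is therefore at most
\[
 \|P_F\|_{\mathcal W}\,\|P_H\|_{\mathcal W}\,\Lambda^{-\eta}\le e^{C\ell^5-\eta\ell^6}=O(N^{-A}).
\]
For a product of two hit indicators, use the inclusion--exclusion clause of \cref{prop:avoidance}.

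\emph{Periodic insertion.} The argument for \eqref{ac:periodic-insertion} is the same, with $H$ replaced by $\1_E(Dx+\theta)$, approximated by $\psi_E$ from \cref{lem:smoothing}, and $h$ replaced by the constant $v$. Nonzero modes $kD$ of $\psi_E(Dx+\theta)$ have norm at least $|D|\ge a_*\Lambda^2$, again far above $\operatorname{spec}P_F$. The zero mode contributes $(v+O(N^{-B}))\int P_F\,d\mu$, and we then return from $P_F$ to $F$. Inserted indicators and parameter-integrated errors need no new estimates: run the same computation with the enlarged $a_*$ from \cref{ac:slowweights}, and integrate the squared error bounds before applying Cauchy--Schwarz.
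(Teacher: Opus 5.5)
Your outline (replace $F$ by $P_F$, $H$ by $P_H$, $h$ by $\Phi_T*P_H$, then use Fourier decay) is the paper's, but two steps fail as written.

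The first failure is in the frequency-cancellation step, which rests on a false claim: nonconstant modes of $P_H$ need not have norm at least $a_-/2$. The separation you cite from the proof of \cref{prop:avoidance} holds only inside one lacunary residue class, where it serves the exponential-moment bound. But $P_H=1-P(Z)$ is a polynomial of degree $k\asymp\ell^3$ in $Z=\sum_n\psi_n(a_nx+\theta_n)$ over all indices of the block, so its spectrum contains cross combinations. For instance, $Z^2$ contains the product of the mode $-2\mathbf e_1$ of $\psi_n$ with the mode $\mathbf e_1$ of $\psi_{n+1}$. Its frequency is $(a_{n+1}-2a_n)\mathbf e_1$, which has norm $1$ when $a_{n+1}=2a_n+1$. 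For such $\zeta$ the cross frequency $\xi+\zeta$ may be small and $\widehat\mu$ gives no decay, so your bound $\|P_F\|_{\mathcal W}\|P_H\|_{\mathcal W}\Lambda^{-\eta}$ is unjustified. You use the multiplier $1-\widehat\Phi(\zeta/T)$ only at $\zeta=0$. The paper uses it on the whole range $\|\zeta\|\le T/\Lambda$, where it is $O(\|\zeta\|^2/T^2)=O(\Lambda^{-2})$. For $\|\zeta\|>T/\Lambda$, the bound $\|\xi\|\le N^Ca_*\le N^CT\Lambda^{-2}$ gives $\|\xi+\zeta\|\ge\|\zeta\|/2\ge T/(2\Lambda)\ge\Lambda/2$, and Fourier decay applies. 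The total is $e^{C\ell^5}(\Lambda^{-2}+\Lambda^{-\eta})$; no lower bound on the spectrum of $P_H$ is needed.

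The second failure is the replacement of $H$ by $P_H$, which you flag as the main obstacle and leave open. Your decomposition runs in the wrong direction: it rewrites $\int F(H-P_H)\,d\mu$ in terms of the very quantity $\int P_F(H-P_H)\,d\mu$ you need to bound. The proposed $L^1$ argument through the $\psi_n$ does not help. The difference $H-P_H$ also contains the Chebyshev approximation error, which \cref{prop:avoidance} controls only in $L^2(\mu)$. Pairing with $P_F$ in $L^1$ would again cost $\|P_F\|_\infty\le e^{C\ell^5}$, which no fixed power $N^{-A''}$ beats. The missing observation is that the sup norm is the wrong norm. Because $\|F\|_\infty\le N^{C'}$ and $\|F-P_F\|_2\le1$, one has $\|P_F\|_2\le N^{C'}+1$, regardless of the accuracy chosen for $P_F$ (only its Wiener-norm constant depends on that choice). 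Cauchy--Schwarz then bounds both $\int P_F(H-P_H)\,d\mu$ and $\int P_F(h-\Phi_T*P_H)\,d\mu$ by $(N^{C'}+1)N^{-A''}$. Choosing $P_H$ after $P_F$ with $A''=A+C'+3$ completes the reduction, as in the paper. With these two repairs, your insertion argument, where the modes $kD$ with $k\ne0$ genuinely are separated, goes through as written.
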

\begin{proof}
Fix the required exponent $A$.  Choose $Q=P_F$ in
\cref{ac:slowweights} with sufficiently high $L^2$ accuracy; then
$\|Q\|_2\le N^{C'}$.  Next choose the current-hit approximant $P$
in \cref{prop:avoidance} so accurately that
$\|H-P\|_2+\|h-\Phi_T*P\|_\infty\le N^{-A-C'-3}$.
Cauchy--Schwarz and the bounds $0\le H,h\le1$ reduce the first claim to
estimating $\int Q(P-\Phi_T*P)\,d\mu$.
Write $\xi'$ and $\xi$ for frequencies of $Q$ and $P$.
For $\|\xi\|\le T/\Lambda$, the multiplier
$1-\widehat\Phi(\xi/T)$ is $O(\Lambda^{-2})$.
For $\|\xi\|>T/\Lambda$, we have
$\|\xi+\xi'\|\ge\|\xi\|/2\ge T/(2\Lambda)$ for large $N$, because
$\|\xi'\|\le N^{C_A}a_*$ and $T\ge a_*\Lambda^2$.
Fourier decay bounds the corresponding integral by $O(\Lambda^{-\eta})$.
The product of the coefficient norms is at most $e^{C_A\ell^5}$;
therefore the total is at most
$Ce^{C_A\ell^5}(\Lambda^{-2}+\Lambda^{-\eta})=O_A(N^{-A})$.

For the insertion estimate, take the polynomial $\psi_E$ from
\cref{lem:smoothing} with $B$ sufficiently large and mean $v+O(N^{-B})$.
Since $|D|\ge\Lambda^2$, its approximation at frequency $D$
has arbitrary inverse-power $L^2$ accuracy.  After replacing the
weight by $Q$ and the indicator by $\psi_E$, a nonconstant Fourier mode
has frequency $Dk+\xi'$, where $k\in\Z^d\setminus\{0\}$ and
$\|\xi'\|\le N^{C_A}a_*$.  Its norm is at least $|D|/2$.
Fourier decay and the coefficient bounds again make the total
nonconstant contribution $O_A(N^{-A})$.  The constant term gives
$v\int F\,d\mu$ to the same accuracy.  The integrated versions use
Cauchy--Schwarz on the product of $\mu$ and the parameter measure.
\end{proof}

For inserted hits with $|D|\ge\Lambda$, \cref{lem:smoothing} gives the
required approximations uniformly in the target. For smaller
positive frequencies $D\ge a_1$, we average the errors over target parameters.
Let $(\Theta,\nu)$ be either $\T^d$ with Haar measure, parametrizing
fixed-body translates, or the slab parameter space in
\eqref{eq:model-slabs} with normalized Lebesgue measure. Write
$J_\vartheta$ for the corresponding target. The bounded smoothing
polynomials $\psi_\vartheta$ at width
$\delta$ satisfy, for $D\ge a_1$,
\begin{equation}\label{eq:early-hit-smoothing}
 \int_\Theta\|\psi_\vartheta(D\,\cdot)-\1_{J_\vartheta}(D\,\cdot)\|_{L^2(\mu)}^2
       \,d\nu(\vartheta)
 \ll \delta+D^{-\eta}\delta^{\eta_0}
 \ll_{a_1}\delta^{\eta_0},\qquad
 \eta_0=\tfrac12\min\{\eta,1\}.
\end{equation}
Boundedness reduces squared error to $L^1$ error.  For translates,
integration in the parameter gives $O(\delta)$ pointwise in $x$, by
\eqref{eq:smooth-target}. For slabs, integration over the intercept
also gives $O(\delta)$ unless a coordinate of $Dx\bmod1$ lies within
$O(\delta)$ of a vertical cube face.  Each such coordinate layer has
$\mu$-mass $O(\delta+D^{-\eta}\delta^{\eta_0})$ by \cref{lem:interval}.
Taking $\delta=N^{-B}$ with $B$ sufficiently large verifies
the parameter-integrated hypothesis in \cref{ac:slowweights,ac:transfer},
including for initially omitted indices.

\section{Local hitting estimates and convex geometry}\label{sec:all-convex-geometry}
We estimate hitting probabilities using local averages.
Write $X_n(x)=a_nx\bmod1$ and
$\mathbb P_Q(E)=\vol(E\cap Q)/\vol(Q)$ for an axis-parallel cube $Q$;
let $\mathbb E_Q$ denote the corresponding expectation.
Constants are uniform over all normalized convex targets; for a fixed
body they may also depend on $\mathcal C$.

\subsection{Finite inner approximations}

We use the inner approximating family from
\cref{ac:normalization,ac:catalogue}, together with its anisotropic scales
and frames.  Stefanescu~\cite[Step~5]{Stefanescu2025MZ} discretizes
inscribed hyperrectangles by scale, rotation, and translation.
For homothets of a fixed body, the following smaller family suffices.

\begin{lemma}\label{lem:fixed-catalogue}
For each fixed $0<\varepsilon<1$ and sufficiently small $s$, there is
a collection of $O_{\mathcal C,\varepsilon}(s^{-1})$ translates of
$\mathcal C_{(1-\varepsilon)s}$ such that every translate of
$\mathcal C_s$ in $\T^d$ contains one of them.
Moreover, if a point $p$ lies in the fixed enlargement of a translate's
enclosing box, the translate contains $p+J$ for
some $J$ in a list of
$O_{\mathcal C,\varepsilon}(1)$ homothets of $\mathcal C$,
independent of $p$, all of volume $(1-\varepsilon)^2s$.
The relative family can be labeled so that, for each $p$, every incident
original translate is assigned to a contained $p+J$, with at most
$C_{\mathcal C,\varepsilon}$ original translates per label.
\end{lemma}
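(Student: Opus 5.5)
Recall the normalization of Section~\ref{sec:prelim}: $0\in\operatorname{int}\mathcal C$ and $\vol(\mathcal C)=1$, so $\mathcal C_s=s^{1/d}\mathcal C$ and $\mathcal C$ contains a ball $\rho B_2$ with $\rho=\rho(\mathcal C)>0$. The whole proof works at the linear scale $r=s^{1/d}$ and uses the inclusion
\[
 (1-\varepsilon)^{1/d}r\mathcal C+\varepsilon' r\rho B_2\subset r\mathcal C,
 \qquad \varepsilon'=1-(1-\varepsilon)^{1/d},
\]
which follows from convexity: $\lambda\mathcal C+(1-\lambda)\rho B_2\subset\mathcal C$.

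\emph{First assertion.} Take the lattice $h r\Z^d$ with $h=\varepsilon'\rho/\sqrt d$, and project it to $\T^d$. Let the family be the translates $c+(1-\varepsilon)^{1/d}r\mathcal C$, with $c$ ranging over the projected lattice points. This gives $O((hr)^{-d})=O_{\mathcal C,\varepsilon}(s^{-1})$ sets, each of volume $(1-\varepsilon)s$. Given a translate $t+r\mathcal C$, choose a lattice point $c$ with $\|c-t\|\le\sqrt d\,hr/2<\varepsilon'r\rho$. By the displayed inclusion, $c+(1-\varepsilon)^{1/d}r\mathcal C\subset t+r\mathcal C$. Since $4r_0\operatorname{diam}\mathcal C<1$, everything lives in one fundamental domain, so the projection to $\T^d$ causes no trouble.

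\emph{Second assertion.} Fix one of the catalogue translates $T=c+r'\mathcal C$, with $r'=(1-\varepsilon)^{1/d}r$. Its enclosing box, fixed-enlarged, is contained in $c+Rr'B_2$ for some $R=R(\mathcal C)$. For $p$ in this region, write $p=c+r'y$ with $\|y\|\le R$. Repeat the construction one level down, at scale $(1-\varepsilon)^{1/d}r'$. Take the finite net $Y=(h\Z^d)\cap RB_2$ and define
\[
 J_y=r'\bigl(-y+(1-\varepsilon)^{1/d}\mathcal C\bigr),\qquad y\in Y.
\]
Each $J_y$ is a homothet of $\mathcal C$ of volume $(1-\varepsilon)^2s$, and the list has $O_{\mathcal C,\varepsilon}(1)$ members. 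The list is independent of $p$ and of $T$, because everything has been rescaled by $r'$ around $c$.

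Now take $y_0\in Y$ nearest to $y$. Then $p+J_{y_0}=c+r'\bigl((y-y_0)+(1-\varepsilon)^{1/d}\mathcal C\bigr)$. Since $\|y-y_0\|\le\varepsilon'\rho$, the convexity inclusion above (with $\mathcal C$ in place of $r\mathcal C$) shows this set is contained in $c+r'\mathcal C=T$.

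\emph{Labeling and multiplicity.} Fix $p$. Assign each incident translate $T=c+r'\mathcal C$ the label $J_{y_0(T)}$, where $y_0(T)$ is the net point nearest to $(p-c)/r'$. If two translates receive the same label $J$, their centers satisfy $c=p+r'y_0-r'(\text{error})$, with the error of norm at most $h\sqrt d$. So all such centers lie in a ball of radius $O(hr')$ around $p+r'y_0$. That ball contains $O(1)$ points of the center lattice $hr\Z^d$, which gives the bound $C_{\mathcal C,\varepsilon}$ on the number of translates per label.

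The only delicate step is the bookkeeping: the list must be independent of $p$, and the multiplicity bound must hold per label. Both follow from working in coordinates rescaled by $r'$ about the catalogue center, so no real obstacle is expected.
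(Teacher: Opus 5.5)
Your proposal is correct and follows essentially the paper's argument: a center mesh of spacing comparable to $(1-u)s^{1/d}$ whose error is absorbed by the contraction $u=(1-\varepsilon)^{1/d}$, a fixed-spacing mesh of the rescaled displacement $(p-c)/s^{1/d}$ followed by a second contraction by $u$, and the multiplicity bound from counting center-mesh points in a ball of radius $O(s^{1/d})$. Only cosmetic fixes are needed. Take the points of $hr\Z^d$ in a fundamental domain rather than projecting the whole lattice, since the full projection is infinite (dense) unless $hr\in\mathbb Q$, and is not $O((hr)^{-d})$ in general. Enlarge $Y$ slightly so that the nearest net point always lies in it. Correct the sign to $c=p-r'y_0-r'(y-y_0)$.
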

\begin{proof}
Choose $b>0$ with $bB_2\subset\mathcal C$, and put
$u=(1-\varepsilon)^{1/d}$.  A center error of size at most
$b(1-u)s^{1/d}$ is absorbed by the contraction $u\mathcal C_s$.
A torus mesh of this spacing has $O(s^{-1})$ points.
If an enlarged enclosing box contains $p$, the displacement of
the center from $p$, divided by $s^{1/d}$, lies in a fixed bounded
set.  Mesh this set at a fixed small spacing and apply a second contraction
by the factor $u$.  The inward margin absorbs the displacement error,
and the resulting relative list has bounded cardinality and volume
$(1-\varepsilon)^2s$.  Keep the displacement-mesh point as part of each
label.  For fixed $p$ and a fixed label, the original center lies in a
box of diameter $O(s^{1/d})$; the original center mesh has spacing
comparable to $s^{1/d}$.  It therefore contributes only a bounded number
of centers, proving the multiplicity assertion.
\end{proof}

\subsection{Pair intersections}
The following estimates are stated for targets in $[0,1]^d$, but the same
proofs apply to convex lifts of coordinate widths at most one projected
modulo $\Z^d$.  The projection is injective on the interior, and unit-cell
intersections give $O_d(1)$ convex pieces.  All cycle lists and adapted-box
constants are uniform over the finite normalized families used below.
Let $Q$ be an axis-parallel cube of side $\rho$.

\begin{lemma}\label{ac:rotation-return}
For every $0<\varepsilon<1$ there exist an integer $H\ge1$ and
constants $C,T_0>0$ with the following property. Let $r\in\R$ and let $I\subset\Z$ be an interval of
$T\ge T_0$ consecutive integers. If, for some arc $J\subset\T$ of
length $0<w\le\varepsilon/8$,
\[
 \#\{k\in I:rk\bmod1\in J\}\ge\varepsilon T,
\]
then some $1\le m\le H$ satisfies
\begin{equation}\label{ac:rotation-capture}
 T\|mr\|_{\R/\Z}\le Cw.
\end{equation}
\end{lemma}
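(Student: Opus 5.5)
This is the estimate \eqref{od:rotation-count} from the proof of \cref{lem:od-pairs}, now stated with explicit quantifiers. I will follow the same pigeonhole-plus-residue-class argument. Throughout, put $\epsilon'=\varepsilon$ and $H=\lceil 2/\varepsilon\rceil$, and assume $T\ge T_0$ with $T_0=\lceil16/\varepsilon^2\rceil$. Let $S\subset I$ be the set of hitting indices, so $|S|\ge\varepsilon T$.

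\emph{Step 1: a short return.} Enumerate $S$ in increasing order. Its $|S|-1$ consecutive differences sum to at most $T-1$. Hence some difference $m$ satisfies
\[
 m\le\frac{T}{\varepsilon T-1}\le\frac2\varepsilon ,
\]
using $\varepsilon T\ge2$, which holds for $T\ge T_0$. The two corresponding points $rk\bmod1$ and $r(k+m)\bmod1$ both lie in the arc $J$ of length $w$. Therefore $\delta:=\|mr\|_{\R/\Z}\le w$, and $1\le m\le H$. If $\delta=0$, then \eqref{ac:rotation-capture} holds trivially, so assume $\delta>0$.

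\emph{Step 2: counting in residue classes.} Split $I$ into the $m$ residue classes modulo $m$. Within one class containing $L_i$ indices, consecutive points $rk\bmod1$ move by the same signed amount $\pm\delta$ on $\T$. So they form an arithmetic progression of step $\delta\le w\le 1/8$ and total span $L_i\delta$.

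Such a progression passes over the arc $J$ at most $L_i\delta+2$ times, counting full wraps around the circle plus the two partial ones. Each pass contributes at most $w/\delta+1$ points inside $J$. Summing over the $m$ classes and using $\sum_iL_i=T$ gives
\[
 \varepsilon T\le(T\delta+2m)\Bigl(\frac w\delta+1\Bigr)
 = Tw+T\delta+2m\frac w\delta+2m
 \le 2Tw+2m\Bigl(\frac w\delta+1\Bigr).
\]

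\emph{Step 3: conclusion.} Since $w\le\varepsilon/8$, we have $2Tw\le\varepsilon T/4$. Moving this term to the left and multiplying by $\delta$ yields
\[
 \tfrac34\varepsilon T\delta\le 2m(w+\delta)\le 4mw\le 4Hw,
\]
where the middle inequality uses $\delta\le w$. Hence $T\delta\le Cw$ with $C=16H/(3\varepsilon)$, which proves \eqref{ac:rotation-capture}.

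The main obstacle is Step 2. One must count per residue class to avoid losing a factor of $m$, and must account for the at most two partial passes per class. This is exactly what produces the additive $2m(w/\delta+1)$ term, and that term is absorbed only because Step 1 gives $\delta\le w$ and $m$ is bounded.
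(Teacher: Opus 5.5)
Your proof is correct and is essentially the paper's own argument. The paper proves this lemma by citing \eqref{od:rotation-count} in the proof of \cref{lem:od-pairs}, which uses the same three steps: the short return $m\le 2/\varepsilon$ with $\delta\le w$, the residue-class progression count giving $\varepsilon T\le 2Tw+2m(w/\delta+1)$, and absorption via $w\le\varepsilon/8$ and $\delta\le w$; you have simply made $H$, $T_0$ and $C=16H/(3\varepsilon)$ explicit.
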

\begin{proof}
This is \eqref{od:rotation-count} in the proof of
\cref{lem:od-pairs}, with $\epsilon'=\varepsilon$, $P=T$, and $v=w$.
\end{proof}

\begin{lemma}\label{ac:pairs}
For convex $K,K'\subset[0,1]^d$ of volume $s$ and $a<b$ with $a\rho\ge4$,
\begin{equation}\label{ac:coarse}
 \mathbb P_Q(ax\bmod1\in K,bx\bmod1\in K')
 \le C_d(s^2+s a/b).
\end{equation}
For $K'=K$, fix $\varepsilon>0$ and an upper bound $R$ for $b/a$.
There exist an integer $H\ge1$ and constants $C,T_0,s_0>0$,
depending only on $d,\varepsilon,R$,
such that, if $0<s<s_0$ and $a\rho\ge T_0$, a pair probability
exceeding $\varepsilon s$ implies the existence of coprime positive
integers $m,n\le H$ with
\begin{equation}\label{ac:capture}
 |mb-na|\rho\le Cw,
 \qquad w=\ell_d,
\end{equation}
where $\ell_d$ is the shortest scale of $K$ in
\eqref{ac:normalize}.
In particular, the finite list of candidate integer pairs is independent
of the target's center, orientation, and normalized body.
\end{lemma}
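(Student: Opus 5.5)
We follow the proof of \cref{lem:od-pairs}, now in the multidimensional setting. The coarse bound \eqref{ac:coarse} comes from slicing $Q$ into slow periods. The capture bound \eqref{ac:capture} comes from a one-dimensional reduction along the direction of the shortest scale $\ell_d$, followed by \cref{ac:rotation-return}.

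\emph{Coarse bound.} Tile $Q$ by the cells of the lattice $a^{-1}\Z^d$ that meet it. Since $a\rho\ge4$, there are $\ll(a\rho)^d$ such cells, and including boundary cells costs only a constant factor. On each cell, $x\mapsto ax\bmod1$ is a bijection onto $\T^d$. The set $\{ax\in K\}$ therefore has volume $s a^{-d}$ in that cell. Inside it, $bx$ ranges over a copy of $(b/a)K$ modulo $\Z^d$. We bound the volume of $\{bx\in K'\}$ within the preimage of $K$ by
\[
 \vol(K')\,b^{-d}\cdot\#\{\text{cells of }b^{-1}\Z^d\text{ meeting }a^{-1}(K+k)\}.
\]
The number of such cells is at most the volume of the $(1/b)$-neighbourhood of $a^{-1}K$, multiplied by $b^d$. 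For convex $K\subset[0,1]^d$, the Steiner formula bounds this neighbourhood's volume by $a^{-d}s+C_d a^{-1}b^{-(d-1)}+\dots$. The dominant error term corresponds to a neighbourhood of width $a/b$ in the unit scaling, and all mixed volumes of $K$ are bounded. This gives a conditional probability of order $s+C_d a/b$. Summing over cells and dividing by $\rho^d$ yields $C_d(s^2+sa/b)$.

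\emph{Capture.} Assume $K'=K$ and that the pair probability exceeds $\varepsilon s$. Write $K=c_K+U\mathsf D\mathcal P$ as in \eqref{ac:normalize}, and let $n_d=U\mathbf e_d$ be the unit normal to the thin direction, so that $K$ lies in a slab $\{y:|n_d\cdot(y-c_K)|\le C_dw\}$. For $x=x_0+k/a$ with $k\in\Z^d$, the fast point is $bx=bx_0+rk$ modulo $\Z^d$, where $r=b/a\le R$. Fix $x_0$ in a fundamental cell for which $ax_0\in K$. By Fubini and the coarse counting above, there is such an $x_0$ for which at least a fraction $\varepsilon'\asymp\varepsilon$ of the $T\asymp a\rho$ steps $k$ along each coordinate line satisfy $bx_0+rk\in K$. (Average over $x_0$ and over lines; discarding the partial cells costs $O(s/T)$.) Restrict to one coordinate line $k=k_0+j\mathbf e_i$, with $i$ chosen so that $|(n_d)_i|\ge d^{-1/2}$. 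Membership in $K$ then implies that $n_d\cdot(bx_0+rk_0)+j\,r(n_d)_i$ lies, modulo the lattice projection, within $C_dw$ of a fixed value. The difficulty is that $n_d\cdot\Z^d$ is not $\Z$, so the projection does not live on $\T$. We handle this by working instead with the $i$-th coordinate itself. The condition $bx\in K$ forces the $i$-th coordinate $r j+\mathrm{const}\bmod1$ to lie in the projection of the slice of $K$ along the remaining coordinates. To make this slice have length $O(w)$, we choose the line direction adapted to the frame. This is exactly where the adapted-box constants enter: we fix the remaining coordinates of $k$ as well, so that the relevant set is a segment through $K$ in direction $\mathbf e_i$, whose length is $\ll w/|(n_d)_i|\ll w$. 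Applying \cref{ac:rotation-return} with $J$ an arc of length $Cw$ gives some $1\le m\le H$ with $T\|mr\|_{\R/\Z}\le Cw$.

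Let $n$ be the nearest integer to $mr$ and divide out any common factor. Then $|mb-na|\rho=(T)\,|mr-n|\cdot(\rho a/T)\ll w$, which is \eqref{ac:capture}, and $n\le mR+1$ is bounded. The list of pairs $(m,n)$ depends only on $H$ and $R$, and hence only on $d,\varepsilon,R$.

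\emph{Main obstacle.} The hardest step is the reduction to a single line: guaranteeing that a positive fraction of steps along one lattice line land in a segment of length $O(w)$ through a \emph{fixed} arc, uniformly in the orientation $U$. We would first handle this by pigeonholing over lines and using that the chord of $K$ in direction $\mathbf e_i$ has length at most $C_dw$.
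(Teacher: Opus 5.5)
Your proposal is correct and follows essentially the same route as the paper. The coarse bound comes from counting fast cells that meet the slow target via an outer-layer (Steiner) volume estimate. The capture bound places $K$ in a slab of width $O(w)$, picks a coordinate $i$ with $|(U\mathbf e_d)_i|\ge d^{-1/2}$ so that coordinate chords have length $O(w)$, averages to find one good line of slow cells, and applies \cref{ac:rotation-return}; the arc-length hypothesis there holds because $w\ll s^{1/d}$ and $s<s_0$. Two small slips do not affect the argument: the Steiner cross term should read $a^{-(d-1)}b^{-1}$ rather than $a^{-1}b^{-(d-1)}$, and Fubini gives one good line in the fixed direction $\mathbf e_i$, not a good fraction along every coordinate line.
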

\begin{proof}
Inside a unit cell for $ax$, the fast-cell mesh is $a/b$.
The number of its cells meeting the slow target body is at most
$C_d(b/a)^d(s+a/b)$; each contributes at most $s(a/b)^d$.
The outer-layer bound in \eqref{eq:uniform-boundary} gives
$\vol(K+[-t,t]^d)\le s+C_dt$. Covering $aQ$ by full unit cells
changes the resulting bound by at most a dimension-dependent factor.
This proves \eqref{ac:coarse}.

For the refined estimate, put $r=b/a\in[1,R]$ and $T=a\rho$.
Full slow cells have $ax=k+z$, where $z\in K$ and the coordinates
of $k$ range independently over integer intervals of length $T+O(1)$.
The partial boundary cells contribute $O_d(s/T)$.
A lift of $K$ lies in a slab
\[
 \{y:|\langle\nu,y\rangle-c|\le C_dw\},
 \qquad \|\nu\|=1,\qquad w=\ell_d.
\]
Choose a coordinate $j$ with $|\nu_j|\ge d^{-1/2}$.
For fixed $z$ and all the other coordinates of $k$, the second-hit
condition $r(k+z)\bmod1\in K$ restricts $rk_j\bmod1$ to an arc
of length at most $C_dw$, by the coordinate-fiber bound for the slab.
If the pair probability exceeds $\varepsilon s$, then, after discarding
the boundary error and taking $T$ sufficiently large, some such fiber
contains at least a fixed positive fraction, depending only on
$d,\varepsilon$, of this one-dimensional rotation orbit.
For sufficiently small $s$, one has $w\ll_d s^{1/d}$ and this arc is
small enough for \cref{ac:rotation-return}. Hence there is a bounded
positive integer $m$ such that
\[
 T\|mr\|_{\R/\Z}\le Cw.
\]
Let $n$ be the nearest integer to $mr$. Since $1\le r\le R$, both
$m$ and $n$ are positive and bounded in terms of $d,R,\varepsilon$.
Dividing by their greatest common divisor, if needed, gives coprime
$m,n$ with
$|mb-na|\rho=T|mr-n|\le Cw$, as required.
For a convex lift of coordinate widths at most one, the projected fibers
are unions of $O_d(1)$ arcs; one arc then contains a fixed positive
fraction of the visits.  The same conclusion follows after changing
only the constants.
\end{proof}

\subsection{Generic and uniform hitting estimates}
For a normalized target $K=c_K+U\mathsf D\mathcal P$ of volume $s$, put
\[
 R(K)=c_K+U\prod_{j=1}^d[-C_d\ell_j,C_d\ell_j],
 \qquad R_1(K)=c_K+A_1(R(K)-c_K),
\]
where the fixed enlargement $A_1$ will be chosen below.
Thus $\vol(R_1(K))=O(s)$, and its shortest principal scale is comparable
to $w=\ell_d$.

\begin{lemma}\label{ac:generic-pair-hit}
Fix $d,q$, a positive integer $M$, and $\varepsilon_1>0$.
There exist an integer $H_1\ge1$ and constants $\varepsilon,A_1,T_0>0$
with the following property
for all sufficiently small $s$. Let $\mathcal I$ consist of $M$ indices
of the lacunary sequence, let $a_{\min}$ be its smallest frequency,
and suppose $Q$ has side $\rho$
with $a_{\min}\rho\ge T_0$.  Call $\mathcal I$ exceptional for $K,Q$
if some pair of its hit events has intersection probability exceeding
$\varepsilon s$.  If it is not exceptional, then
\[
 \mathbb P_Q(\exists n\in\mathcal I:X_n\in K)
       \ge(1-\varepsilon_1)Ms.
\]
If it is exceptional, there are two of its frequencies $a<b$ and bounded
positive integers $m,n,D\le H_1$, with $m\ne n$, such that, after possibly
changing the sign of $z=mb-na$,
\begin{equation}\label{ac:pair-incidence}
 |z|\rho\le Cw,\qquad
 zx\bmod1\in D R_1(K)\pmod{\mathbb Z^d}\quad\hbox{for every }x\in Q.
\end{equation}
The lists of pairs and integer coefficients are independent of $K$.
\end{lemma}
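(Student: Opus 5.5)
The proof runs parallel to the block-hitting lemma of Section~\ref{sec:one-dimensional}, with the one-dimensional pair lemma replaced by \cref{ac:pairs}.

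\emph{Non-exceptional case.} If no pair in $\mathcal I$ has intersection probability exceeding $\varepsilon s$, we use the first two terms of inclusion--exclusion. Each marginal satisfies $\mathbb P_Q(X_n\in K)=s(1+O((a_{\min}\rho)^{-1}))$; counting unit cells of $ax$ meeting $Q$ gives this, as in Section~\ref{sec:one-dimensional}. Then
\[
\mathbb P_Q(\exists n\in\mathcal I:X_n\in K)\ge Ms(1-O(T_0^{-1}))-\binom M2\varepsilon s .
\]
Choosing $T_0$ large and then $\varepsilon$ small in terms of $M,\varepsilon_1$ yields $(1-\varepsilon_1)Ms$.

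\emph{Exceptional case: extracting the relation.} Suppose a pair $a<b$ in $\mathcal I$ has intersection probability above $\varepsilon s$. Since both indices lie among $M$ consecutive lacunary indices, $b/a\le R$ for some $R=R(M,q)$; in fact \eqref{ac:coarse} alone forces $b/a\ll_\varepsilon 1$. Apply the refined part of \cref{ac:pairs} with this $R$. It gives coprime $m',n'\le H$ with $|m'b-n'a|\rho\le Cw$. Set $z=m'b-n'a$, up to sign, and take $m=m'$, $n=n'$. Lacunarity gives $m\ne n$: if $m=n=1$, then $|b-a|\rho\ge(q-1)a\rho\ge(q-1)T_0$, which exceeds $Cw$ once $T_0$ is large and $s$ is small. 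The list of candidate pairs and coefficients is indexed only by pairs inside an $M$-block and integers at most $H$, so it does not depend on $K$.

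\emph{Exceptional case: locating $zx$.} This is the main step, the analogue of \eqref{od:pair-location}. Pick $x_0\in Q$ with $ax_0\bmod1\in K$ and $bx_0\bmod1\in K$; such a point exists because the pair probability is positive. Write $ax_0=k+y$ and $bx_0=k'+y'$ with $y,y'$ in a lift of $K$. Then
\[
zx_0=m'bx_0-n'ax_0\equiv m'y'-n'y \pmod{\Z^d}.
\]
The goal is to show this lies in $D R_1(K)$ with $D=m'-n'$, up to sign and after absorbing the multiplicity into a bounded integer. Write
\[
m'y'-n'y=(m'-n')c_K+m'(y'-c_K)-n'(y-c_K).
\]
The vectors $y-c_K$ and $y'-c_K$ lie in $R(K)-c_K$, a centred box. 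A combination with bounded integer coefficients $m',n'$ therefore lies in $(m'+n')(R(K)-c_K)$. Hence $zx_0\in Dc_K+(m'+n')(R(K)-c_K)$.

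Interpret $DR_1(K)$ as the set $Dc_K+DA_1(R(K)-c_K)$, matching the convention of $E_D$ in \eqref{od:incidence-arcs}. When $|D|\ge1$, choosing $A_1\ge 2H+2$ covers the coefficient $m'+n'$ and leaves room for the next error.

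Moving from $x_0$ to a general $x\in Q$ changes $zx$ by $z(x-x_0)$, whose norm is at most $\sqrt d\,|z|\rho\le C'w$. Every principal scale of $R(K)$ is at least $C_d\ell_d=C_dw$, so this displacement lies in a fixed multiple of $R(K)-c_K$, again absorbed by enlarging $A_1$. The constants $C$ and $A_1$ depend only on $d,q,M,\varepsilon_1$ through $H$ and $R$.

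\emph{Expected obstacle.} The delicate point is the sign and multiplicity convention in $DR_1(K)$, in particular keeping $D\ne0$ so that the incidence set has measure $O(s)$. Because $m\ne n$, $D=n-m$ is nonzero and bounded, say $1\le|D|\le H_1:=2H$. I would check that this convention is the one used downstream, where $zx$ must be confined to a set of Haar measure $O(s)$ with bounded boundary constant as in \eqref{eq:uniform-boundary}. The set $DR_1(K)$ projected mod $\Z^d$ has this property, since it is a union of at most $|D|^d$ translates of boxes of volume $O(s)$.
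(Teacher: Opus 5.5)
Your proposal is correct and follows the paper's proof essentially step for step. The shared steps are: Bonferroni with period-counted marginals in the non-exceptional case; a ratio cutoff from \eqref{ac:coarse}, then \cref{ac:pairs} and the lacunarity exclusion of $m=n=1$; and a simultaneous hit at $x_0$ placing $zx_0$ in $(m-n)c_K$ plus a bounded multiple of the centred box, with the $O(w)$ displacement $z(x-x_0)$ absorbed by enlarging $A_1$.

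One slip: lacunarity bounds ratios only from below, so $M$ indices need not satisfy $b/a\le R(M,q)$. The bound must come from \eqref{ac:coarse}, as you also note and as the paper does; for small $s$ it forces $b/a\le 2C_d/\varepsilon$ for exceptional pairs.
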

\begin{proof}
Choose $\varepsilon\ll\varepsilon_1/M$.
Taking $T_0$ sufficiently large, period counting gives every marginal
probability at least $(1-\varepsilon_1/2)s$.  If all pair probabilities are at
most $\varepsilon s$, Bonferroni proves the first assertion.
Choose a ratio cutoff so large that \eqref{ac:coarse} gives this same
small-pair bound above the cutoff, for all sufficiently small $s$.
For an exceptional pair below it, \cref{ac:pairs} supplies bounded
coprime $m,n$ and $z=mb-na$ with $|z|\rho\le Cw$.
Lacunarity excludes $m=n$: in that case $m=n=1$ and
$|z|\rho\ge(q-1)a_{\min}\rho$, contradicting the preceding bound.

An exceptional pair has a simultaneous hit at some $x_0\in Q$.
Writing those two hits as $y,y'\in K$ gives
\[
 zx_0\bmod1=(my'-ny)\bmod1
 \in (m-n)c_K+U\prod_{j=1}^d[-C'\ell_j,C'\ell_j]
                       \pmod{\mathbb Z^d}.
\]
Change the sign if necessary and put $D=|m-n|$.
For $x\in Q$, the additional displacement $z(x-x_0)$ has norm $O(w)$,
which is absorbed by a fixed enlargement of every principal scale.
Choosing $A_1,H_1$ large enough proves \eqref{ac:pair-incidence}.
\end{proof}

\begin{lemma}\label{ac:positive-hit}
There is a constant $\chi=\chi(d,q)>0$ such that, whenever
$Ls\le1$ and $a_{\min}\rho$ is sufficiently large,
\[
 \mathbb P_Q(\exists n\in\mathcal I:X_n\in K)\ge\chi Ls
\]
for every $L$-index block $\mathcal I$ and every normalized convex target
$K$ of volume $s$.
\end{lemma}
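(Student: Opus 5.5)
The plan is to copy the second-moment argument behind \eqref{od:positive-hit} in \cref{lem:od-block-hitting}, with \eqref{ac:coarse} from \cref{ac:pairs} in place of \eqref{od:pair-coarse}. Let $Y=\sum_{n\in\mathcal I}\1_{\{X_n\in K\}}$ count the hits of the block. By Cauchy--Schwarz,
\[
 (\mathbb E_QY)^2\le\mathbb P_Q(Y>0)\,\mathbb E_QY^2 ,
\]
so it suffices to prove $\mathbb E_QY\ge Ls/2$ and $\mathbb E_QY^2\le C_{d,q}(Ls+L^2s^2)$. Given these, $Ls\le1$ yields $\mathbb P_Q(Y>0)\ge (Ls)^2/(4C_{d,q}\cdot 2Ls)$, which is the claim with $\chi=1/(8C_{d,q})$.

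\emph{First moment.} For each single index I count the full unit cells of $a_nx\bmod1$ that meet the cube $Q$. This gives
\[
 \mathbb P_Q(X_n\in K)=s\bigl(1+O_d((a_n\rho)^{-1})\bigr),
\]
uniformly in $K$, since the partial cells contribute only a relative error $O_d((a_n\rho)^{-1})$. Every frequency in the block satisfies $a_n\ge a_{\min}$, so once $a_{\min}\rho$ is sufficiently large each marginal is at least $s/2$. Summing over the $L$ indices gives $\mathbb E_QY\ge Ls/2$.

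\emph{Second moment.} I expand $\mathbb E_QY^2=\mathbb E_QY+2\sum_{i<j}\mathbb P_Q(X_i\in K,X_j\in K)$. The marginal estimate gives $\mathbb E_QY\le 2Ls$. For each pair $a_i<a_j$ we have $a_i\rho\ge4$, so \eqref{ac:coarse} applies with $K'=K$ and bounds the pair probability by $C_d(s^2+s\,a_i/a_j)$. By lacunarity \eqref{eq:growth},
\[
 \sum_{i<j}\frac{a_i}{a_j}\le L\sum_{k\ge1}q^{-k}\ll_q L,
\]
and hence the pair sum is at most $C_d L^2s^2+C_{d,q}Ls$. This is the required bound on $\mathbb E_QY^2$.

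\emph{Uniformity.} I expect the only real obstacle to be checking that every constant is uniform over all normalized convex targets and all cube positions. The coarse bound \eqref{ac:coarse} depends only on $d$, through the outer-layer estimate \eqref{eq:uniform-boundary}, and the marginal period-counting bound depends only on $d$. So $\chi$ depends only on $(d,q)$. The lower bound needed on $a_{\min}\rho$ is $\max\{4,C_d\}$, which is also independent of $K$. For targets that are projections of convex lifts modulo $\Z^d$, the remark preceding \cref{ac:rotation-return} says the same estimates hold with changed constants, so the argument covers those targets as well.
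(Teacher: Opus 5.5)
Your proposal is correct and is essentially the paper's proof: the same Cauchy--Schwarz second-moment bound $\mathbb P_Q(Y>0)\ge(\mathbb E_QY)^2/\mathbb E_QY^2$, with period counting giving $\mathbb E_QY\ge Ls/2$, and with \eqref{ac:coarse} plus the lacunary estimate $\sum_{i<j}a_i/a_j\ll_q L$ giving $\mathbb E_QY^2\le C_{d,q}(Ls+L^2s^2)\le 2C_{d,q}Ls$. Your version only makes the constant $\chi=1/(8C_{d,q})$ and the uniformity checks explicit.
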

\begin{proof}
For $Z=\sum_{n\in\mathcal I}\1_K(X_n)$, period counting gives
$\mathbb E_QZ\ge\tfrac12Ls$.  By \eqref{ac:coarse} and lacunarity,
\[
 \mathbb E_QZ^2\le C_{d,q}(Ls+L^2s^2)\le 2C_{d,q}Ls,
\]
since $\sum_{i<j}a_i/a_j\le C_qL$.
Cauchy--Schwarz, in the form
$\mathbb P_Q(Z>0)\ge(\mathbb E_QZ)^2/\mathbb E_QZ^2$, proves the claim.
\end{proof}

\subsection{Rational-rank entropy and pinned approximations}
The rational-rank stratification gives sharper hitting coefficients and
smaller approximating families for the all-convex upper bound.
\begin{lemma}\label{ac:rankentropy}
Fix $d\ge1$, a finite set $\mathcal V\subset\R^d\setminus\{0\}$,
and constants $A,L_0>0$. Consider the finite inner approximating family obtained
from the normalization $K=c_K+U\mathsf D\mathcal P$, with $U\in SO(d)$,
$\mathsf D=\operatorname{diag}(\ell_1,\ldots,\ell_d)$,
$L_0\ge\ell_1\ge\cdots\ge\ell_d>0$, and a uniformly bounded number of
normalized bodies for each choice of scales.  Suppose
$\det \mathsf D\asymp s$, and construct its
orientation nets as disjoint packings in the neighborhoods
$\{R:\|\mathsf D^{-1}(R-I)\mathsf D\|\le\delta\}$.
Associate each member with its chosen $U,\mathsf D,c_K$, including after
the final homothetic adjustment to its common volume. Define its rank by
\begin{equation}\label{eq:rational-rank}
 \operatorname{rk}(K)=\dim\operatorname{span}\{v\in \mathcal V:
             |(U\mathbf e_j)\cdot v|\le A\ell_j\ (1\le j\le d)\}.
\end{equation}
For sufficiently small $s$, one has $\operatorname{rk}(K)\le d-1$. For every
$0\le h\le d-1$, the subcollection with $\operatorname{rk}(K)\ge h$ has cardinality
$$
 O\bigl(s^{-(d-h)}(\log(1/s))^{C_d}\bigr).
$$
If a fixed enlargement of the enclosing box is additionally required
to contain a prescribed point $p$, the corresponding cardinality is
$$
 O\bigl(s^{-(d-h-1)}(\log(1/s))^{C_d}\bigr).
$$
The implied constants may depend on $d,\mathcal V,A,L_0$ and the approximation accuracy,
but not on $s$. Both estimates count subfamilies of the family fixed above.
\end{lemma}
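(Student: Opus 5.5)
The count is organized by the orientation net. The center and scale counts are unchanged, while the frames are restricted by the rank condition. Fix a choice of scales $\mathsf D$ in the logarithmic scale mesh. The center mesh has $O(s^{-1})$ points, or $O(1)$ points once an enlargement of the enclosing box must contain a prescribed point $p$, since the box has volume $O(s)$ and the mesh cells are comparable boxes. The frame net has cardinality $O(\prod_{j<k}\ell_j/\ell_k)$ because the translates $U_\alpha\mathcal U_\gamma(\mathsf D)$ are disjoint with Haar measure bounded below by \eqref{eq:orientation-neighborhood}. It therefore suffices to show that the frames with rank at least $h$ occupy Haar measure
\[
 O\Bigl(\prod_{j<k}\frac{\ell_k}{\ell_j}\cdot s^{-(d-1-h)}\cdot s^{h-(d-1)}\Bigr),
\]
that is, the unrestricted count times $s^{h}$. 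Summing over the polylogarithmically many scale choices then gives $s^{-(d-h)}$, or $s^{-(d-h-1)}$ in the pinned case.

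\emph{Rank bound.} Suppose $\operatorname{rk}(K)=d$. Then $d$ independent vectors of $\mathcal V$ satisfy $|(U\mathbf e_j)\cdot v|\le A\ell_j$ for all $j$. Writing $M$ for the matrix whose columns are these vectors, $|\det M|=|\det(U^{T}M)|\le d!\,A^d\prod_j\ell_j\ll s$. The left side is a fixed positive number bounded below in terms of $\mathcal V$, which is a contradiction for small $s$. Hence $\operatorname{rk}(K)\le d-1$.

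\emph{Measure estimate.} Fix one of the finitely many $h$-dimensional spans $W$ generated by vectors of $\mathcal V$, together with a basis $v_1,\dots,v_h$ of $W$ taken from $\mathcal V$. The condition says that each $v_i$ has its $j$-th frame coordinate of size at most $A\ell_j$. Equivalently, $U^T v_i$ lies in the box $\prod_j[-A\ell_j,A\ell_j]$, which is essentially the set $\mathsf D B_2$ up to constants. I will bound the Haar measure of
\[
 \{U:\ U^{T}W\text{ lies within }O(\mathsf D B_2)\text{ on a basis}\}
\]
using the same coordinates as in the proof of \cref{ac:catalogue}. Near a given such $U_0$, write $U=U_0e^{S}$. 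The constraint then forces the $h$ fixed unit directions $U^TW$ to lie near the span of the long axes, to within the short scales. The plan is to show that the set is covered by $O(\prod_{j<k}\ell_j/\ell_k\cdot s^{h})$ translates of $\mathcal U_\gamma(\mathsf D)$, by counting the admissible positions of the $h$-plane $U^TW$ in the Grassmannian.

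Concretely, $U^TW$ must be an $h$-plane spanned by vectors with coordinates $O(\ell_j)$. The Haar measure of such $h$-planes near the top $h$ coordinates is roughly $\prod_{i\le h<k}\ell_k/\ell_i$ times lower-order factors, and one compares this with the full flag measure. The cleanest route seems to be induction on $h$. At each step, adding one more vector $v$ independent of the previous span confines one further unit direction to lie within $O(\ell_j)$ in each frame coordinate. This costs a factor comparable to $\prod_j\ell_j\asymp s$, up to normalization by $\ell_1\asymp1$-type factors, relative to the unconstrained measure of the direction on the sphere. I would verify this factor $s$ per independent constraint by a Fubini argument over $SO(d)\to S^{d-1}$, fibering by the image of the new direction, and this is where I expect the main obstacle to lie. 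The constraint box has volume $\prod_j\ell_j$ in the full $d$-dimensional coordinates, but the direction lives on a sphere of dimension $d-1$. Moreover, the previously constrained directions correlate with it, so the gain must be computed orthogonally to the earlier span using Gram determinants, which are bounded below since $\mathcal V$ is finite and fixed. I would try first to show that the loss from the unit-sphere normalization is at most a factor $\ell_1^{-1}\ll1$.

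The pinned estimate follows by combining the same frame bound with the $O(1)$ pinned center count in place of $O(s^{-1})$. Members that have undergone the final volume adjustment retain $U,\mathsf D,c_K$, so they are counted identically.
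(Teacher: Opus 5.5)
\textbf{There is a genuine gap: the central Haar-measure estimate is not proved.}

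Your reduction is sound in outline. The rank bound via $|\det(U^{\mathsf T}M)|\le d!\,A^d\prod_j\ell_j$ is fine, the center and scale counts match the paper, and dividing Haar measure by the packing-neighbourhood measure $\gg\prod_{j<k}\ell_k/\ell_j$ is the right way to count frames. Two small points. First, your displayed target has a spurious factor $s^{h-(d-1)}$; what is needed, as your phrase ``unrestricted count times $s^h$'' says, is a Haar bound $O(s^h)$. Second, you must note that the packing neighbourhood $U_\alpha\mathcal U_\gamma(\mathsf D)$ of a qualifying net point satisfies the constraints with an enlarged $A$. This follows from $\mathsf D^{-1}(U_\alpha R)^{-1}v=(\mathsf D^{-1}R^{-1}\mathsf D)(\mathsf D^{-1}U_\alpha^{-1}v)$; without it, discrete net points are not controlled by the Haar measure of the constraint set.

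The heart of the lemma is the estimate
\[
 \operatorname{Haar}\{U\in SO(d):\ |(U\mathbf e_j)\cdot v_i|\le A\ell_j\ \text{for all } i\le h,\ j\le d\}\ll(\det\mathsf D)^h .
\]
Your proposal does not establish it. You sketch an induction and assert a factor $\prod_j\ell_j$ per constraint. You then flag the two real difficulties yourself as unresolved: correlation among the $h$ constrained directions, and the sphere normalization. The Grassmannian paragraph is only heuristic. The per-step claim is also not automatic. Given the earlier directions $u_1,\dots,u_{i-1}$, the next one is uniform on the unit sphere of $E=\{u_1,\dots,u_{i-1}\}^\perp$. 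For a general $E$ lying along the long axes (e.g.\ $d=3$, $E=\operatorname{span}(\mathbf e_1,\mathbf e_2)$, $\ell_1=\ell_2\asymp1$, $\ell_3\asymp s$), the fraction of that sphere inside the box is $\asymp1$, not $\prod_j\ell_j$. So one must use that $E$ is the complement of directions that are themselves constrained.

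\textbf{How the paper closes it.} Write $V=[v_1\ \cdots\ v_h]=WB$ with $W$ orthonormal and $\|B^{-1}\|\ll1$. The constraint becomes $\|\operatorname{row}_j(U^{-1}W)\|\le C\ell_j$, where $U^{-1}W$ is Haar on the Stiefel manifold. A standard Gaussian $d\times h$ matrix factors as $G=QR$, with $Q$ Haar--Stiefel and independent of $R=(G^{\mathsf T}G)^{1/2}$. On the positive-probability event $\{\|R\|\le1\}$, the row bounds for $Q$ imply those for $G$. The rows of $G$ are independent, so the probability is at most $\prod_j\mathbb P(\|Z\|\le C\ell_j)\ll\prod_j\ell_j^h$, with $Z\sim N(0,I_h)$. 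This handles correlation and normalization at once.

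\textbf{How your inductive route could be completed.} Let $K$ be the enlarged box, $m=d-i+1$, and $g$ a standard Gaussian on $E$. Radial comparison gives
\[
 \mathbb P(u_i\in K)\,\mathbb P(\|g\|\le1)\le\mathbb P(g\in K)\le(2\pi)^{-m/2}\vol_m(E\cap K).
\]
The Rogers--Shephard section--projection inequality then gives $\vol_m(E\cap K)\,\vol_{i-1}(P_{E^\perp}K)\le\binom{d}{i-1}\vol_d(K)$. Since $K$ contains the cross-polytope on $\pm u_1,\dots,\pm u_{i-1}$, the projection $P_{E^\perp}K$ has volume $\gg_d1$. This yields the conditional bound $\ll_d\prod_j\ell_j$ uniformly. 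Some argument of this kind is the missing step.
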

\begin{proof}
Fix the scales and an independent $h$-tuple $v_1,\ldots,v_h$ from
$\mathcal V$; there are finitely many choices.  We claim
\begin{equation}\label{ac:rankhaarmeasure}
 \operatorname{Haar}\{U: |(U\mathbf e_j)\cdot v_i|\le A\ell_j
                         \text{ for all }i,j\}
       \ll(\det\mathsf D)^h.
\end{equation}
For $h=0$ this is immediate.  Otherwise let
$V=[v_1\ \cdots\ v_h]$.  Since $\mathcal V$ is finite and the chosen tuple is
independent, write $V=WB$, where $W$ is an orthonormal $h$-frame and
$B$ is invertible with $\|B^{-1}\|\ll_{\mathcal V,h}1$.  The constraints then give
$\|\operatorname{row}_j(U^{-1}W)\|\le C\ell_j$.

For Haar-uniform $U\in SO(d)$, $U^{-1}W$ is Haar-uniform on the
Stiefel manifold. By Gaussian polar factorization~\cite[Proposition~9]{Tropp2012},
a standard Gaussian $d\times h$ matrix $G$ admits $G=QR$, where
$Q$ has the Haar-Stiefel law and is independent of
$R=(G^{\mathsf T}G)^{1/2}$.

Let $E_Q$ denote the above row-bound event for
$Q$, and define $E_G$ similarly.  On $\{\|R\|\le1\}$, which has fixed
positive probability and is independent of $Q$, the event $E_Q$ implies
$E_G$.  Hence independence of the Gaussian rows gives
\[
 \mathbb P(E_Q)\ll\mathbb P(E_G)
 =\prod_{j=1}^d\mathbb P(\|Z\|\le C\ell_j)
 \ll\prod_{j=1}^d\ell_j^h,
 \qquad Z\sim N(0,I_h).
\]
This proves \eqref{ac:rankhaarmeasure}.

The disjoint orientation neighborhoods from \cref{ac:catalogue}
have Haar measure $\gg\prod_{j<k}\ell_k/\ell_j$.
A neighborhood centered at a qualifying $U$ satisfies the same
constraints with enlarged $A$, since for an orientation perturbation $S$,
$$
 \mathsf D^{-1}(US)^{-1}v_i
 =(\mathsf D^{-1}S^{-1}\mathsf D)(\mathsf D^{-1}U^{-1}v_i).
$$
Dividing \eqref{ac:rankhaarmeasure} by this neighborhood measure
bounds the number of qualifying orientations by
$$
 C(\det\mathsf D)^h\prod_{j<k}\frac{\ell_j}{\ell_k}
 \le C'(\det\mathsf D)^{-(d-h-1)}\asymp s^{-(d-h-1)}.
$$
The inequality follows after multiplying by
$(\det\mathsf D)^{d-h-1}$: the product is
$\prod_j\ell_j^{2(d-j)}$, bounded by $\ell_j\le L_0$.
The adapted center mesh contributes $O(s^{-1})$ points.  If a fixed
enlargement of the adapted box is required to contain a prescribed point
$p$, then $\mathsf D^{-1}U^{-1}(c_K-p)$ ranges in a fixed bounded box.
Since the center mesh has spacings comparable to the $\ell_j$, only
$O(1)$ mesh points are then possible for each frame and scale.  Scale
choices contribute logarithms and normalized bodies contribute a fixed
factor.  These are the two asserted counts.

Finally, if $d$ independent vectors from $\mathcal V$ satisfy the constraints,
Hadamard's inequality gives
\[
 0<c_{\mathcal V}\le|\det[v_1\ \cdots\ v_d]|
       \le d^{d/2}A^d\prod_j\ell_j\ll s,
\]
a contradiction for small $s$.  Thus the rank is at most $d-1$.
\end{proof}

\begin{lemma}\label{ac:relative-pinned}
Fix $\mathcal V,A,L_0$ and an approximating family as in \cref{ac:rankentropy}.
Define all ranks using these $\mathcal V,A$ and each member's associated
$U,\mathsf D$.  For each fixed
$0<\varepsilon<1$ and $1\le k\le d$, there is a collection
$\mathscr J_{s,k}$ of at most
$$
 C s^{-(k-1)}(\log(1/s))^{C_d}
$$
convex bodies with associated scales and frames of rank $d-k$, independent
of $p\in\R^d$, with the following property.  If $K$ is a member of this
fixed family, has volume $s$ and rank $d-k$, and a fixed enlargement of
its adapted box contains $p$, then $K$ contains $p+J$ for some
$J\in\mathscr J_{s,k}$ of volume $(1-\varepsilon)s$.  In this containment,
$J$ retains $K$'s $U,\mathsf D$ for subsequent rank comparisons.
For $K\subset[0,1]^d$, the relative bodies may be restricted to have
coordinate widths at most one.  A common fixed enlargement of the
adapted boxes applies to both families.
Moreover, the relative bodies can be labeled so that for every $p$ the
incident original bodies admit an assignment $K\mapsto J_p(K)$ satisfying
$ p+J_p(K)\subset K$ and having uniformly bounded fibers.  In particular,
for any finite point set $\mathcal X$,
\begin{equation}\label{ac:pinned-multiplicity}
 \sum_{\substack{K:\,p\in R_1(K)\\\operatorname{rk}(K)=d-k}}
       \1_{\{\operatorname{int}K\cap\mathcal X=\varnothing\}}
 \le C\sum_{J\in\mathscr J_{s,k}}
       \1_{\{\operatorname{int}(p+J)\cap\mathcal X=\varnothing\}}.
\end{equation}
Here $R_1(K)$ denotes any fixed enlargement of the adapted box, and the
sum on the right is over labels, allowing repeated underlying bodies.
\end{lemma}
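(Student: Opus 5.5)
The plan is to pass to normalized coordinates in which $K$ becomes a bounded body, and to discretize the displacement $p-c_K$ there, exactly as in the proof of \cref{lem:fixed-catalogue}. The one additional ingredient is that the rank condition makes the frame range over a set of size $s^{-(k-1)}$ up to logarithms.

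\emph{Relative family.} Fix a member $K=c_K+U\mathsf D\mathcal P$ whose enlarged adapted box contains $p$. Then $y=\mathsf D^{-1}U^{-1}(p-c_K)$ lies in a fixed bounded box $Y\subset\R^d$, since that box is $c_K+U\mathsf D$ applied to a fixed cube. Mesh $Y$ with a fixed small spacing $\delta'$. Let $y'$ be the mesh point nearest $y$, and take
\[
 J=U\mathsf D\bigl(u'(P-y')\bigr)+U\mathsf D(y'-y)\cdot 0,
\]
that is, set $p+J=c_K+U\mathsf D\bigl(y+u'(P-y')\bigr)$. Here $P$ is the normalized polytope, contained in $\mathcal P$ with margin $\tau$ as in \cref{ac:normalization}, and $u'<1$ is a fixed contraction chosen so that $y+u'(P-y')\subset\mathcal P$. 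This is possible because $|y-y'|\le\delta'$, $P$ has inner margin $\tau$, and $\tfrac12B_2\subset P$. Choosing $\delta'$ small relative to $\tau$ gives containment in $\mathcal P$. Afterwards, adjust the volume to exactly $(1-\varepsilon)s$ by a homothety about a point of the contracted set, using the same argument as at the end of \cref{ac:catalogue}. The resulting body $J$ depends only on $(U,\mathsf D,P,y')$ and not on $c_K$ or $p$ individually. The label is therefore $(U,\mathsf D,P,y')$, and $J$ keeps $U,\mathsf D$ for later rank comparisons.

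\emph{Counting.} The number of labels is the number of mesh points $y'$, which is $O(1)$, times the number of normalized polytopes, also $O(1)$, times the number of scale choices, which is a power of $\log(1/s)$, times the number of frames $U$ of rank at least $d-k$. By \eqref{ac:rankhaarmeasure} with $h=d-k$, divided by the disjoint-neighborhood measure, the frame count is $O\bigl(s^{-(d-h-1)}\bigr)=O\bigl(s^{-(k-1)}\bigr)$. This is the same computation as in the proof of \cref{ac:rankentropy}, except that the center-mesh factor $s^{-1}$ is dropped, because the center is pinned by $p$ and absorbed into the mesh point $y'$. Restricting to rank exactly $d-k$ only shrinks the family.

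\emph{Coordinate widths.} When $K\subset[0,1]^d$, the translate $p+J\subset K$ has coordinate widths at most one, and since $J$ is a translate of $p+J$ so does $J$. So we may discard labels whose body has a coordinate width exceeding one. For the common enlargement, take $R_1$ large enough to contain the boxes used for both families. This is harmless, because enlarging $Y$ only changes constants.

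\emph{Fibers and \eqref{ac:pinned-multiplicity}.} Define $J_p(K)$ as the label built above. To bound the fibers, fix $p$ and a label $(U,\mathsf D,P,y')$. Then $U$, $\mathsf D$ and $P$ determine the orientation, scale and normalized body of $K$. The center $c_K$ is constrained by $\mathsf D^{-1}U^{-1}(p-c_K)\in y'+\delta'B$, so $c_K$ lies in a box with side lengths $O(\ell_j)$ in the frame directions. The center mesh of the original family has spacings comparable to $\ell_j$ in the same frame, so only $O(1)$ mesh centers fall in that box, and only boundedly many $K$ share the label. Finally, if $\operatorname{int}K$ avoids $\mathcal X$, then $\operatorname{int}(p+J_p(K))\subset\operatorname{int}K$ also avoids $\mathcal X$. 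Summing over $K$ with multiplicity at most $C$ per label gives \eqref{ac:pinned-multiplicity}.

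\emph{Main obstacle.} The step requiring the most care is keeping the approximation uniform in anisotropic coordinates. One needs that errors of size $\delta'$ in $y$, after applying $U\mathsf D$, are absorbed by the fixed normalized margin $\tau$. This holds because everything is done in normalized coordinates, where $\mathsf D$ acts isometrically relative to the body. One also needs that the orientation net used to define ranks is the same one fixed in \cref{ac:rankentropy}, so that the bound \eqref{ac:rankhaarmeasure} applies verbatim.
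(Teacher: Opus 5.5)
Your proposal is correct and matches the paper's proof step for step. You mesh the normalized displacement $\mathsf D^{-1}U^{-1}(p-c_K)$ in a fixed bounded box, absorb the mesh error in the reserved margin of the finite inner polytopes, and retain $U,\mathsf D$ so the rank is unchanged; you count frames by the orientation bound of \cref{ac:rankentropy} with $h=d-k$ (dropping the center-mesh factor), and you bound fibers because a fixed label pins $c_K$ to a cell containing $O(1)$ center-mesh points.

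One small correction: $U$, $\mathsf D$ and the inner polytope $P$ need not determine $K$'s original normalized body. You should either keep that body in the label, as the paper does, or note that only boundedly many normalized bodies occur at each scale and frame. Either way the fibers stay bounded.
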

\begin{proof}
Keep the scale and orientation meshes from \cref{ac:rankentropy}.
In the coordinates $U\mathsf D$, the displacement $c_K-p$ lies in
a fixed bounded box.  Mesh this displacement at a fixed small
spacing and use the finite normalized inner approximations from
\cref{ac:normalization}.  A reserved inward margin absorbs the
displacement error.  This requires only a fixed number of offsets
and normalized bodies at each frame and scale.  The orientation count in
\cref{ac:rankentropy}, with
$h=d-k$, is $O(s^{-(k-1)})$, and the number of scale choices is polynomial
in $\log(1/s)$.
Choose the meshes sufficiently fine and contract to the specified
volume.  Retain the original $U,\mathsf D$ throughout; only the offset
and normalized inner body change.  Since \eqref{eq:rational-rank} depends
only on $\mathcal V,A,U,\mathsf D$, the relative body has the same rank as $K$.
Retain in each relative label its scale, frame, original normalized
body and displacement-mesh point.  If $p$ and this label are fixed, the
original center $c_K$ lies in a single adapted displacement cell.
The global center mesh has spacings comparable to $\ell_j$ in the same
frame, so this cell contains only $O(1)$ original centers.  There are only
a bounded number of normalized bodies at each scale and frame; retaining
their labels therefore proves the bounded-fiber assertion and
\eqref{ac:pinned-multiplicity}.
Uniform boundedness of the normalized bodies gives a common enlargement
factor for the adapted boxes.  For $K\subset[0,1]^d$, containment of
$p+J$ in $K$ ensures that $J$ has coordinate widths at most one, so
discarding wider bodies preserves every assignment.
\end{proof}

\subsection{Rational cycles and affine hitting}
For positive integers $m,k$, put $H_m^{(k)}=(m^{-1}\Z/\Z)^k$.

\begin{lemma}\label{ac:grids}
Let $k\ge2$. For distinct positive integers $m_1,\ldots,m_r$ and rational points
$t_1,\ldots,t_r\in(\Q/\Z)^k$, assign weight $m_j^{-k}$ to each point of
$t_j+H_{m_j}^{(k)}$, taking the maximum of the assigned weights at
coincidences. The total weight is at least
\begin{equation}\label{ac:gridbound}
 \sum_{j=1}^r\frac{J_k(m_j)}{m_j^k}
 \ge \frac r{\zeta(k)},
 \qquad J_k(m)=m^k\prod_{p\mid m}(1-p^{-k}),
\end{equation}
where $J_k$ is the Jordan totient function and the product is over the prime
divisors of $m$.
\end{lemma}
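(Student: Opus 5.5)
The plan is to prove the two inequalities in \eqref{ac:gridbound} separately. The right-hand inequality is elementary. For each $m$,
\[
 \frac{J_k(m)}{m^k}=\prod_{p\mid m}(1-p^{-k})\ge\prod_{p}(1-p^{-k})=\frac1{\zeta(k)},
\]
by the Euler product, which converges absolutely since $k\ge2$. Summing over $j$ gives the bound $r/\zeta(k)$. All the content is therefore in the left-hand inequality, namely that the max-weighted union of the cosets $C_j=t_j+H_{m_j}^{(k)}$ has total weight at least $\sum_j J_k(m_j)m_j^{-k}$.

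For the left-hand inequality I would find, for each $j$, a set $S_j\subset C_j$ with $\#S_j\ge J_k(m_j)$, such that the weight of the union is at least $\sum_j m_j^{-k}\#S_j$. The natural candidate is the primitive part of the coset,
\[
 S_j=\{t_j+h:\ h\in H_{m_j}^{(k)}\text{ has exact order }m_j\}.
\]
Splitting $h$ into $p$-primary components shows that the number of such $h$ is $\prod_{p^e\| m_j}(p^{ek}-p^{(e-1)k})=J_k(m_j)$. When all $t_j=0$ the sets $S_j$ are pairwise disjoint, since the order of a point is intrinsic, and the claim follows at once.

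For general translates I would order the indices so that $m_1<\cdots<m_r$, which orders the weights decreasingly, and assign each point of the union to the first coset containing it; that coset also realizes the maximal weight. Two structural facts drive the argument. First, $C_j\cap C_{j'}$ is either empty or a coset of $H^{(k)}_{\gcd(m_j,m_{j'})}$. Second, since the $m_j$ are distinct, for $j'<j$ this gcd is a proper divisor of $m_j$. The translate of such a subgroup meets $S_j$ in at most the points whose relative order $m_j$ is forced into a subgroup of index at least $p^k$ for some $p\mid m_j$.

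The key step is a charging argument. Points of $S_j$ already claimed by an earlier coset $C_{j'}$ lose weight $m_j^{-k}$ but were counted at the larger weight $m_{j'}^{-k}$. I would show that the excess $m_{j'}^{-k}-m_j^{-k}$ available on $C_{j'}\cap C_j$, together with the unclaimed primitive points, pays for $J_k(m_j)m_j^{-k}$. To organize this I would work prime by prime via the CRT decomposition $(\Q/\Z)^k=\bigoplus_p(\Q_p/\Z_p)^k$, reducing to an inequality for cosets of the groups $(p^{-e}\Z/\Z)^k$. There the subgroup lattice is a chain, and the intersection pattern of cosets is a rooted tree: each coset of level $e$ splits into $p^k$ cosets of level $e+1$. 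On such a tree, a max-weight-versus-sum inequality can be checked by induction on depth, using that a child coset accounts for only a $p^{-k}$ fraction of its parent.

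The main obstacle is this last step. It requires combining the per-prime tree bounds into the multiplicative factor $\prod_{p\mid m}(1-p^{-k})$ when many earlier cosets with different moduli overlap the same $C_j$. If the direct charging becomes unwieldy, I would fall back on an inclusion--exclusion or M\"obius-inversion formulation over the divisor lattice of $\operatorname{lcm}(m_1,\ldots,m_r)$. That would express the total weight as a sum of nonnegative terms, one of which is $\sum_j J_k(m_j)m_j^{-k}$.
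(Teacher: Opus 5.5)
Your right-hand inequality via the Euler product is correct. So is your treatment of the aligned case $t_j=0$ through the disjoint sets of points of exact order $m_j$; this is exactly how the paper finishes. The gap is the left-hand inequality for arbitrary rational shifts, which is the whole content of the lemma. There you give only a plan, with the key step acknowledged as unresolved.

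The charging as described does not work locally. Take $m_1,m_2,m_3=2,4,8$, $t_2=t_3=0$ and $t_1=(1/8,0,\ldots,0)$. Then $C_1\subset H_8^{(k)}$, $C_1\cap H_4^{(k)}=\varnothing$, and every point of $C_1$ has exact order $8$. So $C_3$ first-claims only $8^k-4^k-2^k<J_k(8)$ points. Moreover, $2^k-1$ of the points of $C_1$ are also primitive for $C_1$. Their weight $2^{-k}$ is therefore already needed to pay for $J_k(2)2^{-k}$, and the ``excess'' you want to spend on $C_3$ is double-counted. The inequality holds in this example only because surplus from \emph{non-primitive} points is moved globally: the point $t_1$, and $H_2^{(k)}\subset C_2$. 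Your proposal gives no rule for that transfer. Per-prime weighted tree inequalities also do not multiply together, because $\max_j m_j^{-k}$ over the cosets through a point does not factor over primes. The M\"obius fallback is only asserted.

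The missing idea is to remove the weights first and then align one prime at a time. By the layer-cake formula, the weighted union equals $\int_0^\infty\#\bigcup_{j:\,m_j^{-k}\ge u}C_j\,du$. It therefore suffices to prove $\#\bigcup_{j\in S}(t_j+H_{m_j}^{(k)})\ge\#\bigcup_{j\in S}H_{m_j}^{(k)}$ for every subfamily $S$; integrating then compares the shifted and aligned weighted unions. To prove this, work inside $H_D^{(k)}=\prod_{p\mid D}G_p$, with $D$ divisible by all $m_j$ and all denominators of the $t_j$. For fixed $p$, the $p$-primary parts $G_{p,j}$ of the $H_{m_j}^{(k)}$ form a chain.

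Now fix all primary coordinates except the $p$-th. The set $I$ of labels whose other components contain these coordinates does not depend on the $p$-components of the shifts. The corresponding fiber of the union is a union of cosets of the nested groups $G_{p,j}$, $j\in I$, so it has at least $\max_{j\in I}\#G_{p,j}$ points. Setting the $p$-components of all shifts to $0$ leaves $I$ unchanged and makes the fiber exactly that largest subgroup, so the union cannot grow. Iterating over $p\mid D$ reaches the aligned family, where your primitive-point count finishes the proof. No multiplicative combination across primes and no charging is needed.
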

\begin{proof}
For finite sets $C_j$ with weights $w_j>0$, the weighted union satisfies
\begin{equation}\label{ac:layercake}
 \sum_x\max_j w_j\1_{C_j}(x)
 =\int_0^\infty \#\bigcup_{j:w_j\ge u}C_j\,du.
\end{equation}
We first show that, for every subfamily, translating the torsion grids cannot
make their union smaller than in the aligned case.

Choose $D$ divisible by the
$m_j$ and by the denominators of the shifts, and regard the grids as cosets
in $H_D^{(k)}$. Under the primary decomposition
$$
 H_D^{(k)}=\prod_{p\mid D}G_p,
$$
the $p$-primary part of $H_{m_j}^{(k)}$ is
$$
 G_{p,j}=\{g\in G_p:p^{v_p(m_j)}g=0\},
$$
where $v_p(m_j)$ is the exponent of $p$ in $m_j$. These subgroups
have cardinality $p^{k v_p(m_j)}$ and, for fixed $p$, are linearly
ordered by inclusion. Fix all
primary coordinates except the $p$-coordinate. If $I$ is the set of labels
whose other coordinates contain the chosen point, the corresponding fiber
of the union is a union of cosets of the nested groups $G_{p,j}$,
$j\in I$, and therefore has cardinality at least
$$
 \max_{j\in I}\#G_{p,j}.
$$
Replacing every $p$-primary coset by the subgroup $G_{p,j}$ itself makes
that fiber exactly the largest active subgroup, so it cannot increase the
union. Iterating over the primes dividing $D$ shows that
$$
 \#\bigcup_j(t_j+H_{m_j}^{(k)})
 \ge \#\bigcup_j H_{m_j}^{(k)}
$$
for every rationally shifted subfamily.  Apply this inequality to the
active labels at each threshold in \eqref{ac:layercake}.
In the aligned family, the points of exact denominator $m_j$ are disjoint
for distinct $j$.  Their number is $J_k(m_j)$, and each has weight at
least $m_j^{-k}$, proving the first inequality in \eqref{ac:gridbound}.
Since $k\ge2$, the Euler product gives
$$
 \frac{J_k(m)}{m^k}=\prod_{p\mid m}(1-p^{-k})
 \ge \prod_p(1-p^{-k})=\frac1{\zeta(k)},
$$
which proves the second inequality.
\end{proof}

\begin{lemma}\label{ac:affine}
Fix positive integers $H,r$, distinct integers $1\le m_1,\ldots,m_r\le H$,
and arbitrary $t_1,\ldots,t_r\in\T^d$, represented in $[0,1)^d$.
For Haar-uniform $y\in\T^d$, consider the maps $y\mapsto m_jy+t_j$.
There exist a finite set $\mathcal V\subset\Q^d\setminus\{0\}$ and a constant
$A>0$, depending only on $d,H,r$, such that the following holds. Let
$K=c_K+U\mathsf D\mathcal P$ be a convex lift of coordinate widths at most one,
with bounded scales and normalized bodies as in
\cref{ac:normalization,ac:relative-pinned}. Put $s=\vol(K)$ and let $h=\operatorname{rk}(K)$
be the rank in \eqref{eq:rational-rank} with these $\mathcal V,A$.
Membership in $K$ below is understood modulo $\Z^d$.
Suppose $s$ is sufficiently small.
Then $h\le d-1$, and, if $k=d-h\ge2$,
$$
 \mathbb P(\exists j:m_jy+t_j\in K)\ge \frac{rs}{\zeta(k)}.
$$
If the inverse branches have pairwise disjoint interiors, the probability
is exactly $rs$.
\end{lemma}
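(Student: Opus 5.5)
The plan is to compute the measure of the union $\bigcup_j\{y:m_jy+t_j\in K\}$ directly. Each inverse image is a union of small translated copies of $K$ indexed by a torsion grid, and \cref{ac:grids} will bound the union from below once overlaps are controlled. For fixed $j$, the preimage of $K$ under $y\mapsto m_jy+t_j$ is
\[
 \bigcup_{g\in H_{m_j}^{(d)}}\Bigl(g+\frac{K-t_j}{m_j}\Bigr)\pmod{\Z^d}.
\]
This is a union of $m_j^d$ copies of $m_j^{-1}K$, each of volume $s/m_j^d$. The copies are disjoint because $K$ has coordinate widths at most one, so each preimage has Haar measure exactly $s$. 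In particular, if the inverse branches for different $j$ have pairwise disjoint interiors, the probability is exactly $rs$; this disposes of the last assertion. The bound $h\le d-1$ is the final step of \cref{ac:rankentropy}, applied with the $\mathcal V,A$ chosen below.

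Choose $\mathcal V$ to be the set of nonzero vectors $v=u/N_0$ with $u\in\Z^d$, $\|u\|_\infty\le C_H$ and $N_0=\operatorname{lcm}(1,\ldots,H)$, for a large constant $C_H$. Choose $A$ large in terms of $d,H$ and the normalization constants. Two pieces $g+m_i^{-1}(K-t_i)$ and $g'+m_j^{-1}(K-t_j)$ that meet produce a relation
\[
 m_jy-m_iy'\in m_jg-m_ig'+m_jm_i^{-1}(\ldots),
\]
which I will rewrite as the statement that a vector $u/N_0$ lies, modulo $\Z^d$, within a fixed dilate of the adapted box $U\mathsf D[-1,1]^d$. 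Here $u$ is a bounded integer vector built from the grid points and the integer parts of the shifts, after fixing representatives in $[0,1)^d$. The coordinates of $u$ in the frame $U$ are then $O(\ell_j)$, so $u/N_0$ satisfies the constraints in \eqref{eq:rational-rank}. Hence $u/N_0$ lies in the rational subspace $W=\operatorname{span}\{v\in\mathcal V:|(U\mathbf e_j)\cdot v|\le A\ell_j\}$, whose dimension is $h$.

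Let $\pi:\T^d\to\T^d/\overline{W}\cong\T^k$ be the quotient by the rational subtorus $\overline W$ determined by $W$; it is a genuine subtorus because $W$ is spanned by rational vectors. The coincidence step gives the following dichotomy: overlapping pieces have grid points with the same image under $\pi$, and pieces whose grid points have distinct images are disjoint. Thus $\pi$ maps the grid $t_j/m_j+H_{m_j}^{(d)}$ onto a rational coset of $H_{m_j}^{(k)}$, with each point having multiplicity $m_j^{d-k}$ in the fibre direction after the appropriate identification. I will lower bound the union measure fibre by fibre. Over a fixed quotient grid point $x$, the union contains at least one full fibre class of pieces from each $j$ mapping to $x$. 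Counting the $m_j^{d-k}$ fibre pieces of volume $s/m_j^d$ gives mass at least $s\,m_j^{-k}$ from label $j$, and I keep the maximum over $j$ at coincident points. Classes over distinct points of $\T^k$ are disjoint by the dichotomy. Therefore the union measure is at least $s$ times the weighted union in \cref{ac:grids}, which gives $\ge rs/\zeta(k)$ for $k\ge2$.

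The main obstacle is making the coincidence step rigorous. I must show that a near-coincidence of two pieces forces an exact lattice relation whose rational vector lies in $\mathcal V$ and inside the thin directions. This requires choosing $A$ after the bounded enlargements caused by $m_j\le H$ and the normalized-body constants, and checking that the finiteness of $\mathcal V$ is independent of $K$. I will also need $s$ small so that $K$ cannot meet its own translates by vectors outside $W$. The fibre-measure bookkeeping, namely that each $\overline W$-fibre class of label $j$ contributes exactly $s\,m_j^{-k}$, is where I would first check the argument in the rank-zero case, where $\pi$ is the identity.
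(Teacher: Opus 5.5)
\textbf{There is a genuine gap in your coincidence step.} You claim that when two pieces meet, the overlap can be rewritten as a bounded rational vector $u/N_0$ lying in the thin box. That is false for pieces with different labels, and every cross-label overlap is of this kind because the $m_j$ are distinct.

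The branch $(K-t_j+b)/m_j$ meets $(K-t_k+b')/m_k$ only when $s_j-s_k+\delta_e$ lies in a fixed dilate of the adapted box. Here $s_j=(c_K-t_j)/m_j$, and $\delta_e=b/m_j-b'/m_k+z$ is rational. Since $m_j\ne m_k$ and $c_K,t_j$ are arbitrary real vectors, $s_j-s_k$ is an arbitrary real vector, so no rational relation is forced.

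A concrete counterexample:
\begin{itemize}
\item Take $d=2$, $m_1=1$, $m_2=2$, $t_1=0$, $t_2=(0.3,0.4)$, $b=(1,1)$.
\item Take $K$ small and round with $c_K=b-t_2=(0.7,0.6)$. Then $\operatorname{rk}(K)=0$ for small $s$, so your $\pi$ is the identity.
\item The label-$2$ branch $(K-t_2+b)/2=c_K+\tfrac12(K-c_K)$ lies inside $K$. Yet the two grid points $0$ and $(b-t_2)/2$ are distinct, so your dichotomy fails.
\item Your bookkeeping gives weight $1+4\cdot\tfrac14=2$, i.e.\ probability at least $2s$. The true value is $7s/4$.
\end{itemize}
Relatedly, your grids $\pm t_j/m_j+H_{m_j}^{(d)}$ are cosets with real shifts. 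So \cref{ac:grids}, whose proof uses a common denominator and the primary decomposition, does not apply to them.

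\textbf{The missing idea is that only cycles of overlaps carry rational information.} The paper proceeds as follows.
\begin{enumerate}
\item Build the label multigraph with one edge per overlapping pair of branches, and choose a spanning forest.
\item Choose rational potentials $p_j$ with $p_j-p_k=\delta_e$ on forest edges.
\item For a nonforest edge, subtract the overlap relations along the forest path. The real terms $s_j-s_k$ cancel, leaving the rational vector $v_e=\delta_e-p_j+p_k$. These vectors come from a fixed finite list and satisfy the thin-direction bounds.
\item The $v_e$ define $\mathcal V$. Your larger $\mathcal V$ would also work, but $C_H$ must then depend on $r$ as well.
\item Discard the geometric positions. Bound the probability below by $s\sum_C\max_jN_j(C)/m_j^d$ over overlap components $C$; this is where overlaps are paid for.
\item Assign to branch $(j,b)$ the rational point $b/m_j-p_j$. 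This need not preserve intersections, but overlapping branches now have the same image modulo $V+\Z^d$.
\item Merge components with equal images, which only decreases the sum.
\item Apply \cref{ac:grids} to the resulting rationally shifted grids $H_{m_j}^{(k')}$, with $k'\ge k$.
\end{enumerate}
Your count of $m_j^{h}$ pieces per quotient point, each of volume $s/m_j^d$, is the right bookkeeping. It has to be applied to these recentered rational labels and to components, not to the actual geometric grid points.
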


\begin{proof}
After translating $K$ by an integer vector, assume $c_K\in[0,1)^d$.
The inverse branches are the torus images of
$$
 E_{j,b}=(K-t_j+b)/m_j,\qquad b\in\{0,\ldots,m_j-1\}^d.
$$
Branches of the same label have disjoint interiors and volume $s/m_j^d$.
Join branches having an interior intersection. For a component $C$,
let $N_j(C)$ count its branches of label $j$. Thus
\begin{equation}\label{ac:componentmass}
 \mathbb P(\text{some hit})\ge
 s\sum_C\max_j\frac{N_j(C)}{m_j^d}.
\end{equation}
Write $s_j=(c_K-t_j)/m_j$. Every overlap produces an edge on the label
set, retaining parallel edges, with displacement
$$
 d_e=s_j-s_k+\delta_e,\qquad |(U\mathbf e_i)\cdot d_e|\le C_d\ell_i.
$$
Here $\delta_e=b/m_j-b'/m_k+z$ is a rational vector from a fixed finite
list.  With the above choice of lift, all branch centers lie in a fixed
bounded set and the branch diameters are uniformly bounded, so only finitely
many integer torus lifts $z$ can occur.  Choose a spanning forest on the label
multigraph and rational potentials
$p_j$ recursively so that $p_j-p_k=\delta_e$ on every forest edge
$e=(j,k)$.  For any edge $e=(j,k)$ put
$$
 v_e=\delta_e-p_j+p_k.
$$
Thus $v_e=0$ on the forest.  For a nonforest edge, subtracting the overlap
relations along the unique forest path from $j$ to $k$ cancels the terms
$s_j-s_k$ and gives
\begin{equation}\label{ac:cycle}
 |(U\mathbf e_i)\cdot v_e|\le C_{d,H,r}\ell_i.
\end{equation}
All possible nonzero $v_e$, over the finitely many label multigraphs and
edge vectors, form a fixed finite rational set $\mathcal V$. Choose $A$ large enough
that every inequality in \eqref{ac:cycle} is included in the rank test
\eqref{eq:rational-rank}.

Let $V$ be the span of the cycle vectors and put $h'=\dim V$.  By
\eqref{ac:cycle}, every cycle vector is counted in the rank defining $h$,
so $h'\le h$.  By \cref{ac:rankentropy}, $h\le d-1$, hence $h'<d$.
For counting purposes, assign to each branch $E_{j,b}$ the point
$b/m_j-p_j$ modulo $\Z^d$. This label-dependent recentering need not
preserve intersections. If two original branches overlap, their assigned
points differ by $\delta_e-p_j+p_k=v_e$ modulo $\Z^d$. Hence they
have the same image in $\R^d/(V+\Z^d)$, and every original overlap
component has a single quotient image. As $V$ is rational, this quotient
is a torus of dimension $k'=d-h'$. Merge components with the same
quotient image. This can only decrease the sum in
\eqref{ac:componentmass}: if $\mathcal D$ is such a class of components
and $b_{Cj}=N_j(C)/m_j^d$, then
$$
 \max_j\sum_{C\in\mathcal D}b_{Cj}
 \le \sum_{C\in\mathcal D}\max_j b_{Cj}.
$$
$V\cap\Z^d$ is a primitive rank-$h'$
sublattice of $\Z^d$.  Extend a basis of it to an integer basis of
$\Z^d$.  In these coordinates the quotient map sends
$H_{m_j}^{(d)}$ onto $H_{m_j}^{(k')}$, and every quotient point has exactly
$m_j^{h'}$ preimages.  Hence, for each quotient component, the total number
of label-$j$ branches among all old components merged into it is exactly
$m_j^{h'}$ when that quotient point belongs to the projected label-$j$ grid,
and is zero otherwise.  After multiplying by the original branch weight
$m_j^{-d}$, the resulting weight is $m_j^{-k'}$.  The projected shifts
are rational because the $p_j$ are rational and the quotient coordinates
come from an integer basis.  Thus, by
\cref{ac:grids}, \eqref{ac:componentmass} is at least
$rs/\zeta(k')$ when $k'\ge2$.  Since $h'\le h$, one has $k'\ge k$, and
$1/\zeta(k')\ge1/\zeta(k)$ for $k'\ge k\ge2$.  The assertion follows.
The disjoint-branch assertion follows by adding their volumes.
\end{proof}

\subsection{Rational components and block hitting}
We group frequencies linked by bounded rational relations.  Graphs
encoding rational relations also appear in
Alon--Peres~\cite[Section~5]{AlonPeres1992}.

\begin{lemma}\label{ac:rational-cluster}
Fix positive integers $M,H$ and a constant $C>0$. There exist an
integer $H'\ge1$ and a constant $C'>0$, depending only on $M,H,C$,
with the following property. Let
$G$ be a connected graph on at most $M$ labels, let $a_j>0$ be attached
to its vertices, and let $\rho,w>0$. Suppose that for every edge
$ij$ of $G$ there are integers $p_{ij},q_{ij}\in\{1,\ldots,H\}$ such that
\begin{equation}\label{ac:cluster-relation}
 |p_{ij}a_j-q_{ij}a_i|\rho\le Cw.
\end{equation}
Then there are $v>0$, integers $1\le m_j\le H'$, and real numbers $e_j$
such that
\begin{equation}\label{ac:cluster-representation}
 a_j=m_jv+e_j,\qquad |e_j|\rho\le C'w.
\end{equation}
If, in addition,
\begin{equation}\label{ac:cluster-separation}
 |a_i-a_j|\rho>2C'w\qquad(i\ne j),
\end{equation}
then the integers $m_j$ are pairwise distinct.
\end{lemma}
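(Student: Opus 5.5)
The plan is to propagate the bounded rational relations along a spanning tree of $G$, expressing every $a_j$ as a bounded rational multiple of one root value up to a controlled error, and then to clear denominators.

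Fix a spanning tree of $G$ rooted at a label $j_0$. Along a tree path $j_0=i_0,i_1,\ldots,i_k=j$, with $k\le M-1$, each edge relation \eqref{ac:cluster-relation} reads $a_{i_{t+1}}=(q/p)a_{i_t}+\epsilon_t$ with $|\epsilon_t|\rho\le Cw/p\le Cw$, where $p,q\in\{1,\ldots,H\}$ and the roles of $p,q$ depend on edge orientation. Inducting along the path gives
\[
 a_j=r_j a_{j_0}+f_j,
\]
where $r_j=\prod_t(q_t/p_t)$ is a positive rational whose numerator and denominator are at most $H^{M}$. The error $f_j$ accumulates as $f_j=\sum_t(\prod_{u>t}q_u/p_u)\epsilon_t$, so $|f_j|\rho\le M H^{M}Cw$. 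In particular $r_{j_0}=1$ and $f_{j_0}=0$.

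Next let $Q$ be the product, or least common multiple, of the denominators of all the $r_j$, so that $Q\le H^{M^2}$. Put $v=a_{j_0}/Q$ and $m_j=Qr_j$. Then each $m_j$ is a positive integer bounded by $H^{M}H^{M^2}=:H'$, and $e_j=f_j$ satisfies \eqref{ac:cluster-representation} with $C'=MH^MC$. All of these bounds depend only on $M,H,C$.

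For distinctness, suppose $m_i=m_j$ with $i\ne j$. Then $|a_i-a_j|\rho=|e_i-e_j|\rho\le2C'w$, contradicting \eqref{ac:cluster-separation}.

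The only delicate point is the bookkeeping of error growth along paths. The multipliers $q/p$ may exceed one, so the errors amplify geometrically in the path length. Because the path length is at most $M$, however, the amplification is bounded by $H^M$, which is a constant. No other obstacle is expected; the proof is elementary.
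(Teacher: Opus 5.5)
Your proposal is correct and follows the paper's proof essentially verbatim. Both arguments propagate the edge relations along a spanning tree from a root $j_0$ to write $a_j$ as a bounded positive rational multiple of $a_{j_0}$ plus a controlled error, clear denominators via $v=a_{j_0}/Q$ and $m_j=Qr_j$, and get distinctness from $|e_i-e_j|\rho\le 2C'w$. Your explicit constants $C'=MH^MC$ and $H'=H^{M+M^2}$ are slightly cruder than the paper's $H^{M-1}$ bookkeeping, but they are perfectly adequate.
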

\begin{proof}
Choose a root vertex $j_0$ and a spanning tree. Propagating
\eqref{ac:cluster-relation} along its paths gives
\[
 a_j=\alpha_j a_{j_0}+e_j,\qquad |e_j|\rho\le C_0w,
\]
where each positive rational $\alpha_j$ has numerator and denominator
at most $H^{M-1}$.  Along each edge the accumulated error is
multiplied by at most $H$ and increased by at most $Cw/\rho$; there are
at most $M-1$ edges in a path.
Let $D$ be the product of the reduced denominators of the $\alpha_j$.
Then $D$ is bounded in terms of $M,H$, and
$v=a_{j_0}/D$, $m_j=D\alpha_j$ give \eqref{ac:cluster-representation}, with
bounded positive integers $m_j$.
If $m_i=m_j$, then $|a_i-a_j|\rho=|e_i-e_j|\rho\le2C_0w$,
contrary to \eqref{ac:cluster-separation} after taking $C'\ge C_0$.
\end{proof}

\begin{lemma}\label{ac:packet}
Fix $d,q$, a positive integer $M$, and $\varepsilon_1>0$. There exist
a finite set $\mathcal V\subset\Q^d\setminus\{0\}$ and constants $A,T_0>0$,
depending only on $d,q,M,\varepsilon_1$, such that the following holds
for all sufficiently small $s$.  Use the frames from \cref{ac:catalogue}, define $\operatorname{rk}(K)$ by
\eqref{eq:rational-rank} with this fixed $\mathcal V,A$, and put $k(K)=d-\operatorname{rk}(K)$.
For any increasing $M$-tuple of positive frequencies with successive
ratios at least $q$, let $a_{\min}$ denote the smallest of them.
On every averaging cube $Q$ of side $\rho$
satisfying
$$
 \rho a_{\min}\ge T_0,
$$
a catalogue target $K$ of volume $s$ and $k(K)\ge2$ has hitting probability at least
$$
 (\chi_{k(K)}-\varepsilon_1)Ms,\qquad \chi_k=1/\zeta(k)\quad(k\ge2).
$$
\end{lemma}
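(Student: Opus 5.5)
Fix a scale $\delta_0>0$ and call two frequencies of the tuple linked if they admit a bounded relation $|p a_j-q a_i|\rho\le Cw$, with $p,q\le H$ supplied by \cref{ac:pairs}. Pairs that are not linked and whose ratio exceeds the cutoff $R$ have intersection probability at most $\varepsilon s$, by \eqref{ac:coarse} and the refined part of \cref{ac:pairs}. Let $G$ be the graph of linked pairs on the $M$ labels, and split the tuple into its connected components $G_1,\dots,G_c$.

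\textbf{Step 1 (components are rational cycles).} Since the frequencies are $q$-lacunary, any two satisfy $|a_i-a_j|\rho\ge(q-1)a_{\min}\rho\ge(q-1)T_0$. Taking $T_0$ large compared with $C'w$ (recall $w\le C_ds^{1/d}$ is small) makes \eqref{ac:cluster-separation} hold. \Cref{ac:rational-cluster} then writes each component as $a_j=m_jv+e_j$, with distinct bounded $m_j\le H'$ and $|e_j|\rho\le C'w$.

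\textbf{Step 2 (hitting within a component).} Substitute $x=x_0+y'$ on a subcube of $Q$ of side comparable to $1/v$. The perturbations $e_jx$ vary by $O(w)$ over $Q$. Cut $Q$ into subcubes on which $e_jx$ is constant up to $\varepsilon' w$, and absorb the residual into a slight contraction of $K$. This costs only a factor $1-O(\varepsilon')$ of volume, since $K$ contains a ball of radius $\asymp\ell_d$ in adapted coordinates. On each subcube, $vx\bmod1$ is asymptotically equidistributed as $y$ on $\T^d$, because $v\rho\gg1$ (indeed $v\ge a_{\min}/H'$). The hits of the component are then the events $m_jy+t_j\in K'$, with rational-free shifts $t_j=e_jx$ that are constant on the subcube. \Cref{ac:affine} applies with $\mathcal V,A$ chosen for $(d,H',M)$; take the union of the finitely many resulting $\mathcal V$ and the maximum $A$. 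Since $K'$ keeps the frame of $K$, its rank is at most $\operatorname{rk}(K)$ (up to a fixed enlargement of $A$). Hence $k\ge k(K)\ge2$, and the component hits with probability at least $|G_i|s(1-O(\varepsilon'))/\zeta(k(K))$.

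\textbf{Step 3 (combining components).} Use Bonferroni across components:
\[
\mathbb P_Q\Bigl(\bigcup_i\{G_i\text{ hits}\}\Bigr)\ge\sum_i\mathbb P_Q(G_i\text{ hits})-\sum_{i<i'}\sum_{a\in G_i,b\in G_{i'}}\mathbb P_Q(\text{both hit}).
\]
Cross-component pairs are unlinked, so each term is at most $\varepsilon s$, and there are at most $M^2$ of them. Choosing $\varepsilon\le\varepsilon_1/M^2$, the total is at least $(\chi_{k(K)}-\varepsilon_1)Ms$. The refined part of \cref{ac:pairs} is stated for exceedance of $\varepsilon s$, so its contrapositive gives exactly the needed bound for unlinked pairs with ratio at most $R$.

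\textbf{Main obstacle.} The hardest point is Step 2. \Cref{ac:affine} needs exact maps $m_jy+t_j$ applied to a single Haar-uniform $y$. One must verify that the joint law of $(a_jx)_j$ over a subcube is close to that of $(m_jy+t_j)_j$, with error $o(s)$ uniformly in $K$. This should follow by period counting for $vx$: there are $\gg(v\rho)^d$ full periods per cube, and the boundary cost is $O(s/(v\rho))$. The error terms are handled by the contraction margin. The remaining care is to check that this margin is compatible with the rank definition being frame-based, so that the contraction does not change $\operatorname{rk}$.
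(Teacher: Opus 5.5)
Your proposal is correct and follows the paper's proof essentially step for step. The shared steps are: linking labels via \cref{ac:pairs}; using \cref{ac:rational-cluster} with distinct multipliers forced by lacunarity; freezing $e_jx$ on subcubes with a contraction margin $K^-+\varepsilon' wB_2\subset K$; applying \cref{ac:affine} with the rank comparison $k_C\ge k(K)$; and Bonferroni over the at most $M^2$ cross-component pairs. The one bookkeeping point to tidy is the subcube size: the subcubes have side $\rho/B$ with $B$ fixed by $\varepsilon'$, not side comparable to $1/v$. Equidistribution of $vx\bmod1$ on them therefore needs $v\rho/B\gg1$, so $T_0$ must be chosen after $B$, exactly as the paper does.
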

\begin{proof}
Choose $\varepsilon\ll\varepsilon_1/M$ and a ratio cutoff so large that
\eqref{ac:coarse} bounds every pair above it by $\varepsilon s$.
For the remaining ratios take the bounded coefficients $H$ and error
constant $C$ from \cref{ac:pairs}, and let $H',C'$ be the constants in
\cref{ac:rational-cluster}.  Apply \cref{ac:affine} for each
$r\le M$ and multipliers at most $H'$; take the union of its finite sets
as $\mathcal V$.  Choose $A$ large enough for all these applications and the
contractions below.  Fix $\mathcal V,A$ before
dividing targets into rank classes; choose $T_0$ sufficiently large below.

Join two labels whenever they have a bounded-coefficient relation
\eqref{ac:capture}.  Pairs in different components have joint-hit
probability at most $\varepsilon s$, by the ratio cutoff and
\cref{ac:pairs}.  On each component, \cref{ac:rational-cluster} gives
\begin{equation}\label{ac:representation}
 a_j=m_jv+e_j,\qquad 1\le m_j\le H',\qquad |e_j|\rho\le C'w.
\end{equation}
The $m_j$ are distinct: equality would give
$|a_i-a_j|\rho\le2C'w$, whereas lacunarity gives
$|a_i-a_j|\rho\ge(q-1)T_0$ and $w\to0$.

Choose $\varepsilon'>0$ sufficiently small in terms of $\varepsilon_1$,
and subdivide $Q$ into cubes of side $\rho/B$, where the fixed integer
$B$ is large enough that each $e_jx$ varies by at most $\varepsilon'w$
on a small cube.  For such a cube $Q'$ with center $x_{Q'}$, put
$t_j=e_jx_{Q'}$ and $K^- =c_K+(1-\tau)(K-c_K)$, where
$\tau=O_d(\varepsilon')$ is chosen so that
$K^-+\varepsilon'w B_2\subset K$; this follows from
$c_K+w B_2\subset K$.  Thus for $x\in Q'$,
$ m_jvx+t_j\in K^- $ implies
$ a_jx=m_jvx+e_jx\in K $, with all memberships taken modulo
$\mathbb Z^d$.  The volume of $K^-$ is $(1-O_d(\varepsilon'))s$.
On each small cube, period counting shows that $vx\bmod1$ has density
$1+O(BH'/T_0)$ relative to Haar measure, since
$v\rho\ge(a_{\min}\rho-C'w)/H'\ge T_0/(2H')$ for small $s$.
Choose $T_0$ after $B$ to make this relative error sufficiently small.

For a component with $r_C$ labels, the frozen maps are
$y\mapsto m_jy+t_j$.  If $h_C$ is the rank used by \cref{ac:affine} for
the contracted target, then $h_C\le\operatorname{rk}(K)$: its finite
set is contained in $\mathcal V$ and its adapted-coordinate constant is covered
by the global choice of $A$.
Thus $k_C=d-h_C\ge k(K)$, and the affine estimate gives hit probability
at least
\[
 \frac{r_C(1-O_d(\varepsilon')-O(T_0^{-1}))s}{\zeta(k_C)}
 \ge \frac{r_C(1-O_d(\varepsilon')-O(T_0^{-1}))s}{\zeta(k(K))}.
\]
Average over the small cubes and apply the Bonferroni inequality to
the component hitting events on the original cube $Q$.  There are at most $M^2$ cross-component pairs,
whose total contribution is at most $M^2\varepsilon s$.
Our choices of $\varepsilon',\varepsilon$ and a sufficiently large $T_0$
absorb these errors in $\varepsilon_1Ms$.
\end{proof}

Fix a small $\varepsilon_1>0$ and a sufficiently large short-block length
$M$.  Let $L\to\infty$ and $Ls\to0$ as $s\to0$, and assume
$a_{\min}\rho\ge T_0$ for the chosen short-block estimates.  Partition
a block of $L$ indices into consecutive short blocks of length $M$,
dropping fewer than $M$ indices at the end.  At each boundary between successive short blocks, say after index $i_0$,
\[
 \sum_{i\le i_0<j}\frac{a_i}{a_j}
 \le\sum_{r\ge0,\,t\ge1}q^{-r-t}=C_q.
\]
Every pair from different short blocks crosses a boundary, so summing
this inequality over the $O(L/M)$ boundaries gives total ratio sum
$O_q(L/M)$.  Thus \eqref{ac:coarse} bounds the sum of cross-block
intersection probabilities by
\[
 C_dL^2s^2+C_{d,q}Ls/M.
\]
Apply \cref{ac:packet} with a sufficiently small loss.  Since
$L^2s^2=o(Ls)$ and $Ms=o(Ls)$, Bonferroni absorbs
the cross-block errors and the final incomplete short block as $s\to0$.
For every target in class $k\ge2$ this gives
\begin{equation}\label{ac:blockhit}
 \mathbb P_Q(\text{block hits }K)\ge(\chi_k-\varepsilon_1)Ls,
 \qquad\chi_k=1/\zeta(k).
\end{equation}
For $k=1$, and for a fixed body in any dimension, use
\cref{ac:positive-hit} directly, setting $\chi_1=\chi(d,q)$.
Thus a positive version of \eqref{ac:blockhit} holds for every target.

Let $B_b$ be the number of exceptional short blocks in the sense of
\cref{ac:generic-pair-hit}.  Apply that lemma to the other short blocks
and use the same cross-block estimate to obtain
\begin{equation}\label{ac:generichit}
 \mathbb P_Q(\text{block hits }K)
       \ge(1-\varepsilon_1)Ls-Ms B_b.
\end{equation}
Every exceptional short block contributes one of the bounded-coefficient
incidences \eqref{ac:pair-incidence}; the enlargement is fixed once and
for all.

To pass to ball averages, fix a small $\gamma>0$ and cover
at least a $1-\gamma$ fraction of a ball of radius $r$ by disjoint full
cubes of side $\delta r$, where $\delta>0$ depends only on $d,\gamma$.
Apply the rank-dependent lower bound on these cubes and discard the
uncovered part.  For the generic estimate, declare a short block
exceptional for the ball if it is exceptional on one of the cubes.
Then $|z|r\le Cw$, so \eqref{ac:pair-incidence} holds at the ball center
after a fixed enlargement.  Every other short block has the generic
lower bound on every full cube.  Choosing $\gamma$ small absorbs the
volume loss in both lower bounds.  The coarse pair estimate on balls
follows by comparison with an enclosing cube of bounded volume ratio;
together with period counting for the first moment it also proves
\cref{ac:positive-hit} for balls.  All lists and constants are independent
of the averaging radius.  For convolution with $\Phi_T$, use its
ball-mixture representation and discard radii below $C/a_{\min}$, with
$C\ge T_0/\delta$.  Their total mixture mass is
$O((T/a_{\min})^d)$, smaller than every inverse power of $N$ because
$a_{\min}/T\ge\Lambda$.  On each remaining full cube,
$a_{\min}\delta r\ge T_0$.

\section{Upper bounds}\label{sec:all-convex-upper}
For the all-convex upper bound, fix $c>d$ and a probability measure
$\mu$ satisfying \eqref{eq:fourier-decay}. Throughout this section,
$A_{\mathrm{err}}>0$
denotes an accuracy exponent, chosen sufficiently large after the geometric
parameters. We use $J=\operatorname{int}K$ for convex targets, so only
interior hits are counted.
Put $s=c\log N/N$ and $s_0=(1-\varepsilon)s$, the common volume of
the inner approximating family.  We use the short-block estimates with
$q_0=(q+1)/2$, so that the same constants also apply to the shifted
frequencies below.  Discarding finitely many initial indices, we may
suppose $a_n\ge2$.  This can only enlarge an avoidance event.

\subsection{Averaged hit rates and avoidance weights}
For a catalogue target $K$, use the enclosing boxes $R(K)$ and
$R_1(K)$ from Section~\ref{sec:all-convex-geometry}.
By \cref{ac:generic-pair-hit}, there exist an integer $H_1\ge1$
and a constant $A_1>0$, independent of $K$, such that every exceptional
short block on a cube $Q$ has a residual satisfying
\begin{equation}\label{ac:incidence}
 \begin{gathered}
 zx\bmod1\in E_D(K)\quad(x\in Q),\qquad z=\pm(ma_j-na_i),\\
 E_D(K)=D R_1(K)\pmod{\Z^d},\qquad 1\le m,n,D\le H_1.
 \end{gathered}
\end{equation}
The sets $E_D(K)$ have Haar measure $O(s_0)$, and their
$\delta$-boundary neighborhoods have Haar measure $O(\delta)$
for $0<\delta\le1$.

Use the separated blocks from Section~\ref{sec:prelim} with
$L=\lfloor\sqrt N\rfloor$, and let $B$ be the number of full active blocks.
For each active block $\mathcal I_b$, let $\mathcal R_b$ be the
multiset of all residuals $\pm(ma_j-na_i)$ from its constituent short
blocks, with the bounded coefficients just specified.  Write $r_N(z)$
for the total multiplicity of $z$ in these multisets.  Since the short
block length and coefficient range are fixed,
\begin{equation}\label{ac:residual-count}
 \sum_zr_N(z)\le C N.
\end{equation}
Zero residuals are included.  Put
\[
 \beta_b(x,K)=\sum_{z\in\mathcal R_b}\sum_{D=1}^{H_1}
                  \1_{E_D(K)}(zx\bmod1),\qquad
 \beta(x,K)=\sum_b\beta_b(x,K).
\]
Then $\beta_b\le CL$, $L\to\infty$, and $Ls_0\to0$.
By \eqref{ac:blockhit}, \eqref{ac:generichit}, and the ball-mixture
representation of convolution with $\Phi_{T_b}$,
\begin{align}
 h_b(x,K)&\ge(\chi_k-\varepsilon_1)Ls_0-O(N^{-A_{\mathrm{err}}}),
                                      \label{ac:averaged-class}\\
 h_b(x,K)&\ge(1-\varepsilon_1)Ls_0-Ms_0\beta_b(x,K)-O(N^{-A_{\mathrm{err}}}).
                                      \label{ac:averaged-generic}
\end{align}
Here $k=d-\operatorname{rk}(K)$, $\chi_k=1/\zeta(k)$ for $k\ge2$,
and $\chi_1=\chi(d,q_0)>0$ is from \cref{ac:positive-hit}.
For the second estimate, every exceptional short block for an averaging ball
forces \eqref{ac:incidence} at its center, so the same $\beta_b(x,K)$
bounds the exceptional count at every averaging radius.

Choose $e_N\ge0$ as in \eqref{ac:smoothed-marginal}, with
$e_N\ll\Lambda^{-1}$, and put $p_N=L(s_0+e_N)<1/2$. Define
\[
 \mathsf W_b(x,K)=\prod_{j=1}^b(1-H_j(x,K))(1+h_j(x,K)),
 \qquad \mathsf W_0=1.
\]
All these weights are nonnegative and bounded by $N^C$.
Repeating the compensated-weight recurrence from the proof of
\cref{lem:od-uniform-avoidance}, now with \cref{ac:transfer}, gives
\begin{equation}\label{ac:fourier-compensation}
 \int\mathsf W_B\,d\mu\le1+O(N^{-A_{\mathrm{err}}/2}).
\end{equation}
The buffers and final incomplete block contain
$O(N\log\Lambda/L+L+\log\Lambda)=o(N)$ indices.  Moreover,
\[
 \sum_bh_b^2\le p_N\sum_bh_b\ll NLs_0^2
             \ll\frac{(\log N)^2}{\sqrt N}=o(1).
\]
On the avoidance event,
\[
 \mathsf W_B=\prod_b(1+h_b)
 \ge\exp\Bigl(\sum_bh_b-\frac12\sum_bh_b^2\Bigr),
\]
so \eqref{ac:averaged-class} implies
\begin{equation}\label{ac:fourier-void}
 \mu\{\operatorname{int}K\cap A_N(x)=\varnothing\}
 \le e^{-(\chi_k-\varepsilon_1-o(1))Ns_0}
                                      +O(N^{-A_{\mathrm{err}}/2}).
\end{equation}
Applying Markov's inequality with \eqref{ac:averaged-generic} on
$\{\beta(x,K)\le\delta N\}$ and summing over the $N^{d+o(1)}$
approximating bodies, the probability that some empty $K$ has
$\beta(x,K)\le\delta N$ is at most
\begin{equation}\label{ac:mu-generic}
 N^{d-(1-\varepsilon_1-M\delta)(1-\varepsilon)c+o(1)}
 +O(N^{-A_{\mathrm{err}}/3}).
\end{equation}

\subsection{Exceptional targets}
\begin{lemma}\label{ac:fourier-marker}
For $|z|>\Lambda^2$, $1\le D\le H_1$, and a class-$k$ target,
\begin{equation}\label{ac:mu-marked}
 \mu\{\operatorname{int}K\cap A_N(x)=\varnothing,
                            \ zx\bmod1\in E_D(K)\}
 \le Cs_0e^{-(\chi_k-\varepsilon_1-o(1))Ns_0}
                                      +O(N^{-A_{\mathrm{err}}/3}).
\end{equation}
The implied constants and the $o(1)$ are uniform in $z,D$, and $K$.
\end{lemma}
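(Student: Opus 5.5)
This is the higher-dimensional, Fourier-decay version of the case $|z|>N^{2K}$ in the one-dimensional upper bound. The plan is to insert the marker indicator $J(x)=\1_{E_D(K)}(zx\bmod1)$ between the blocks whose frequencies lie well below $|z|$ and those well above it, and then run the compensated-weight recurrence of \cref{lem:od-uniform-avoidance} through the insertion.

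\textbf{Step 1: remove the blocks near $|z|$.} Delete every active block whose frequency range meets $[|z|\Lambda^{-4},|z|\Lambda^{4}]$, and let $\mathcal B_z$ be the retained blocks. By lacunarity this deletes only $O(\log\Lambda)+2L=o(N)$ indices. Each retained block then lies entirely below or entirely above the deleted window.

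\textbf{Step 2: adjust one averaging scale.} For the first retained block above the window, replace $T_b$ by
\[
 \widetilde T_b=\Lambda^2\max\{1,|z|,a_-\},
\]
where $a_-$ is the largest retained frequency below the window. Two conditions must hold.
\begin{itemize}
 \item The first frequency of that block must be at least $\widetilde T_b\Lambda$. This holds because it exceeds $|z|\Lambda^4$, and because the buffer $g$ already separates it from $a_-$ by a factor at least $\Lambda^4$.
 \item The smoothed marginal bound \eqref{ac:smoothed-marginal} must persist. It does, since \cref{ac:slowweights} allows any scales with $a_j^-/T_j\ge\Lambda$.
\end{itemize}
Let $\widetilde h_b$ denote the resulting averages and
\[
 \widetilde{\mathsf W}=\prod_{b\in\mathcal B_z}(1-H_b)(1+\widetilde h_b).
\]

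\textbf{Step 3: run the recurrence with the insertion.}
\begin{itemize}
 \item \emph{Below the window.} Apply \eqref{ac:transferidentity} block by block to the product $F$ of the lower retained factors, giving $\int F\,d\mu\le1+O(N^{-A})$.
 \item \emph{At the window.} Since $|z|\ge a_*\Lambda^2$, with $a_*\le|z|\Lambda^{-4}$, the insertion estimate \eqref{ac:periodic-insertion} applies to the target $E_D(K)$. That target satisfies \eqref{eq:uniform-boundary} and has Haar measure $O(s_0)$, so $\int FJ\,d\mu\le Cs_0\int F\,d\mu+O(N^{-A})$.
 \item \emph{Above the window.} Continue applying \eqref{ac:transferidentity} with $J$ included as an inserted indicator, as \cref{ac:slowweights,ac:transfer} permit. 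This is where $\widetilde T_b\ge|z|\Lambda^2$ is needed: the enlarged $a_*$ now includes $|z|$. Each step contributes $(1-H)(1+h)=1+(h-H)-Hh$, and the last term is nonpositive.
\end{itemize}
Altogether $\int J\widetilde{\mathsf W}\,d\mu\le Cs_0+O(N^{-A})$.

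\textbf{Step 4: lower bound on the avoidance event.} On the event that $\operatorname{int}K\cap A_N(x)=\varnothing$, every retained block misses $K$, so
\[
 \widetilde{\mathsf W}\ge\exp\Bigl(\sum_{b\in\mathcal B_z}\widetilde h_b-o(1)\Bigr),
\]
using $\sum_b\widetilde h_b^2=o(1)$ as before. The class lower bound \eqref{ac:averaged-class} applies to each retained block, including the one with the modified scale. It rests on \eqref{ac:blockhit} via the ball-mixture argument, which needs only $a_{\min}/\widetilde T_b\ge\Lambda$. Since $|\mathcal B_z|L=N-o(N)$, this gives
\[
 \widetilde{\mathsf W}\ge\exp\bigl((\chi_k-\varepsilon_1-o(1))Ns_0\bigr)
\]
on the avoidance event. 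Markov's inequality for $J\widetilde{\mathsf W}$ then yields \eqref{ac:mu-marked}. The error is $O(N^{-A})\,e^{-(\ldots)}$ plus the transfer errors; taking $A$ large in terms of $A_{\mathrm{err}}$ absorbs these into $O(N^{-A_{\mathrm{err}}/3})$.

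\textbf{Main obstacle.} The delicate point is checking that the approximation machinery still applies after the scale modification. The weight $F$ now contains an indicator at frequency $z$ that is not an integer multiple of any block frequency. Its trigonometric approximation in \cref{lem:smoothing} has polynomial Wiener norm and spectrum of radius $N^{C}|z|$, with uniform accuracy since $|z|>\Lambda^2\ge\Lambda$. One must verify that the spectral separation argument in the proof of \cref{ac:transfer}, namely $\|\xi+\xi'\|\ge\|\xi\|/2$, survives. It does, because every later block frequency exceeds $N^{C}|z|\Lambda^{2}$. All bounds must also be uniform in $z,D,K$. This follows because every constant used comes from the uniform boundary estimate \eqref{eq:uniform-boundary} and from uniform lists of short-block constants.
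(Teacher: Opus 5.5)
Your proposal is correct and follows essentially the same route as the paper. Both arguments delete the blocks meeting the window $[|z|\Lambda^{-C},|z|\Lambda^{C}]$ (the paper takes $C_{\rm sep}>6$; your exponent $4$ also suffices), insert the marker via \eqref{ac:periodic-insertion}, enlarge the first averaging scale above the window to $\max\{1,a_*,|z|\}\Lambda^2$, run the compensated-weight recurrence, and finish with \eqref{ac:averaged-class} and Markov's inequality. One small correction: the marginal bound $\widetilde h_b\le L(s_0+e_N)$ persists under the modified scale because of \eqref{eq:single-spatial-marginal} with $\widetilde T_b/a_b^-\le\Lambda^{-1}$, not because of \cref{ac:slowweights}, which takes that bound as a hypothesis.
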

\begin{proof}
Choose $C_{\rm sep}>6$ and put
\[
 W_z=[|z|\Lambda^{-C_{\rm sep}},|z|\Lambda^{C_{\rm sep}}].
\]
Remove every active block whose frequency range $[a_b^-,a_b^+]$ meets
$W_z$.  At most two such blocks contain frequencies outside $W_z$, so
$O(\log\Lambda)+2L=o(N)$ indices are removed.  The retained blocks
contain $N-o(N)$ indices and lie entirely below or above $W_z$.

Insert $M_z(x)=\1_{E_D(K)}(zx\bmod1)$ after the last retained block
below $W_z$.  The torus target $E_D(K)$ has Haar measure $O(s_0)$.
By \cref{lem:smoothing}, $M_z$ has a bounded finite exponential-sum
approximation with arbitrary inverse-power $L^1(\mu)$ accuracy,
polynomial Wiener norm, and spectrum in $\{\|\xi\|\le|z|N^C\}$.
Let $F$ be the compensated weight from the retained blocks preceding
the insertion, and let $a_*$ be its frequency cutoff as in
\cref{ac:slowweights} (take $F=1$ and $a_*=1$ if there is no such block).
Since $|z|\ge a_*\Lambda^2$, \eqref{ac:periodic-insertion} gives
\[
 \int FM_z\,d\mu
 =\lambda_d(E_D(K))\int F\,d\mu+O(N^{-A_{\mathrm{err}}}).
\]

Let $\mathcal B_z$ denote the retained block indices.  Keep their original
averaging scales below $W_z$.  At the first retained block above $W_z$, use
\[
 T_{b,z}=\max\{1,a_*,|z|\}\Lambda^2.
\]
Its smallest frequency divided by $T_{b,z}$ is at least $\Lambda$.
At each later retained block use the usual scale, with the preceding
frequency cutoff including $|z|$. Write $T_{b,z}$ for these scales at every
retained block, including the unchanged scales below $W_z$.  They satisfy
$a_b^-/T_{b,z}\ge\Lambda$.  After the insertion, each such scale is at
least $\Lambda^2$ times the largest frequency in the preceding retained
blocks and $|z|$.  Thus the modified-scale version of
\cref{ac:slowweights} and the hypotheses of \cref{ac:transfer} apply.
Write
\[
 \widetilde h_{b,z}=\Phi_{T_{b,z}}*H_b,\qquad
 \mathsf W_z(x,K)=\prod_{b\in\mathcal B_z}
       (1-H_b(x,K))(1+\widetilde h_{b,z}(x,K)).
\]
By \cref{ac:slowweights,ac:transfer} and the recurrence in the proof of
\cref{lem:od-uniform-avoidance}, inserting $M_z$ via
\eqref{ac:periodic-insertion} yields
\[
 \int M_z\mathsf W_z\,d\mu
 \le\lambda_d(E_D(K))+O(N^{-A_{\mathrm{err}}/2})
 \le Cs_0+O(N^{-A_{\mathrm{err}}/2}).
\]
On the avoidance event for all $N$ indices,
$\mathsf W_z=\prod_{b\in\mathcal B_z}(1+\widetilde h_{b,z})$.
There are $N-o(N)$ retained indices, so \eqref{ac:averaged-class} gives
\[
 \sum_{b\in\mathcal B_z}\widetilde h_{b,z}
 \ge(\chi_k-\varepsilon_1-o(1))Ns_0,
 \qquad
 \sum_{b\in\mathcal B_z}\widetilde h_{b,z}^2=o(1).
\]
Thus $\mathsf W_z\ge e^{(\chi_k-\varepsilon_1-o(1))Ns_0}$ on this
event, proving the assertion.
\end{proof}

For $|z|\le\Lambda^2$, including $z=0$, use the pinned inner approximating
family from \cref{ac:relative-pinned}.  The incidence gives
\[
 p(x)=\frac{zx+b}{D}\in R_1(K)\quad\text{for some }b\in\Z^d.
\]
There is a relative body $J$ of volume $(1-\varepsilon)s_0$ with
$p(x)+J\subset K$.
The projection of $\operatorname{int}J$ to $\T^d$ is injective
and volume-preserving, since the coordinate widths of $J$ are at most one.
Since $p(x)+J\subset[0,1]^d$,
avoidance of $K$ implies
\begin{equation}\label{ac:shifted-avoidance}
 \left(\left(a_n-\frac zD\right)x-\frac bD\right)\bmod1
 \notin (\operatorname{int}J)\bmod\Z^d\qquad(n\le N).
\end{equation}
Delete the initial indices until $a_n\ge2|z|+\Lambda^3$.
There are $O(\log\Lambda)=o(N)$ such indices.  The remaining positive
frequencies $\widetilde a_n=a_n-z/D$ are $q_0$-lacunary: for $z\ge0$
their ratios are at least $q$, while for $z<0$, writing $u=|z|/D$,
\[
 \frac{\widetilde a_{n+1}}{\widetilde a_n}
 \ge\frac{qa_n+u}{a_n+u}\ge\frac{q+1}{2}=q_0.
\]
The first averaging scale is admissible because the remaining frequencies
are at least $\Lambda^3$.  The constants in \cref{ac:transfer} and
\eqref{ac:fourier-void} are uniform over $q_0$-lacunary sequences with
first frequency at least one.  The common phase $-b/D$ is absorbed by
translation of the torus target, which preserves its associated frame,
scales, and rank.  Omitting $O(\log\Lambda)=o(N)$ indices changes the
hitting exponent by $o(1)$.  Thus \eqref{ac:fourier-void} applies
uniformly to \eqref{ac:shifted-avoidance}, with volume
$(1-\varepsilon)s_0$ and the same rank class.

To sum over the original targets, use the bounded-fiber assignment in
\eqref{ac:pinned-multiplicity}.  For fixed $x$, $b\pmod D$ determines $p(x)$ modulo
$\Z^d$.  Choose its representative in $[0,1)^d$; only $O(1)$ integer
lifts lie in the fixed bounded region containing all enlarged boxes,
uniformly in $x,z$.  Replacing $b$ by $b+D u$, $u\in\Z^d$, changes
$p(x)$ by $u$ but leaves \eqref{ac:shifted-avoidance} unchanged modulo
$\Z^d$.  Apply the assignment to these lifts and sum over the
$D^d$ phases.  By \eqref{ac:shifted-avoidance},
\begin{equation}\label{ac:pinned-sum}
 \sum_{K\text{ of class }k}
 \1_{\{K\text{ empty}\}}\1_{E_D(K)}(zx\bmod1)
 \le C\sum_{b\in\{0,\ldots,D-1\}^d}
             \sum_{J\in\mathscr J_{s_0,k}}
              \1_{\{\text{\eqref{ac:shifted-avoidance} holds for }J,b\}}.
\end{equation}
The relative family has $N^{k-1+o(1)}$ members.  Integrating
\eqref{ac:pinned-sum} therefore gives
\begin{equation}\label{ac:small-marker-sum}
 \sum_{K\text{ of class }k}
 \mu\{K\text{ empty},\ zx\bmod1\in E_D(K)\}
 \le N^{k-1-(\chi_k-\varepsilon_1)(1-\varepsilon)^2c+o(1)}
       +O(N^{-A_{\mathrm{err}}/4}).
\end{equation}

If $\beta(x,K)>\delta N$, then
\[
 \1_{\{K\text{ empty}\}}\1_{\{\beta(x,K)>\delta N\}}
 \le\frac1{\delta N}\sum_zr_N(z)\sum_{D=1}^{H_1}
       \1_{\{K\text{ empty}\}}\1_{E_D(K)}(zx\bmod1).
\]
Sum over the class-$k$ subfamily and integrate.  For large $z$,
\cref{ac:fourier-marker}, summed over the $N^{k+o(1)}$ class-$k$
targets, gives
\[
 N^{k-1-(\chi_k-\varepsilon_1)(1-\varepsilon)c+o(1)}
 +O(N^{-A_{\mathrm{err}}/4}).
\]
For small $z$, use \eqref{ac:small-marker-sum}.  In both cases
\eqref{ac:residual-count} gives $\sum_zr_N(z)/(\delta N)=O(1)$.
Thus the exceptional contribution
is bounded by these two displayed powers, summed over $1\le k\le d$.

The generic limiting exponent in \eqref{ac:mu-generic} is $d-c<0$.
For $k\ge2$, the exceptional limiting exponent is
$k-1-\chi_kc$, where $\chi_k=1/\zeta(k)$.  Since
\[
 \zeta(k)<1+\int_1^\infty t^{-k}\,dt=\frac{k}{k-1},
\]
we have $\chi_kd>k-1$.  For $k=1$, the exponent is
$-\chi_1c<0$.  First choose $\varepsilon,\varepsilon_1$ within these strict margins.
The short-block constants, including $M$, are then fixed.  Choose
$\delta$ sufficiently small to retain the margins, and finally
choose $A_{\mathrm{err}}$ large enough to absorb all catalogue and weight factors.  We obtain
\begin{equation}\label{ac:class-margin}
 \mu\{G_N^*>c\log N/N\}\ll N^{-\alpha}
 \qquad\text{for some }\alpha>0.
\end{equation}

\subsection{Fixed-body upper bound}
For this bound, fix $c>1$ and put $s=c\log N/N$ and
$s_0=(1-\varepsilon)s$.  Use the $O(s^{-1})$ translates from
\cref{lem:fixed-catalogue}.  The estimates \eqref{ac:averaged-class}
and \eqref{ac:averaged-generic} follow from
\cref{ac:positive-hit,ac:generic-pair-hit}, with
$\chi=\chi(d,q_0)>0$ in place of $\chi_k$.
The residual incidences \eqref{ac:incidence} and multiplicity bound
\eqref{ac:residual-count} also hold.

The generic approximating-family count is now $N^{1+o(1)}$, giving
\[
 N^{1-(1-\varepsilon_1-M\delta)(1-\varepsilon)c+o(1)}.
\]
For large residuals, the incidence factor $O(s_0)$ cancels the
$O(s_0^{-1})$ translation count.  For small residuals,
\cref{lem:fixed-catalogue} gives a relative family of bounded cardinality
and multiplicity.
The exceptional contribution is consequently at most
\[
 N^{-(\chi-\varepsilon_1)(1-\varepsilon)c+o(1)}
 +N^{-(\chi-\varepsilon_1)(1-\varepsilon)^2c+o(1)}
 +O(N^{-A_{\mathrm{err}}/4}).
\]
The limiting exponents $1-c$ and $-\chi c$ are strictly negative.
Choosing the auxiliary parameters within these margins gives
\[
 \mu\{V_N^{\mathcal C}>c\log N/N\}\ll N^{-\alpha}
 \qquad\text{for some }\alpha>0.
\]

\section{Lower bounds}\label{sec:all-convex-lower}

We extend the weighted second-moment argument of
Section~\ref{sec:one-dimensional}, following the moving-target recursion
of Peres--Yang~\cite[Section~3, Lemma~3.2]{PeresYang2026}.

\subsection{Target parameters and intersections}
For the fixed-body case, let $\Theta=\T^d$ with Haar measure $\nu$ and set
$J_\vartheta(s)=\vartheta+\operatorname{int}\mathcal C_s\pmod{\Z^d}$.
For the unrestricted family, use $J_\vartheta(s)=S_\vartheta(s)$ from
\eqref{eq:model-slabs}, with its normalized parameter measure $\nu$.

For slab targets, let $a<b$, put $R=b/a$, and let $Q$ be a cube of
side $\rho$ with $a\rho\ge1$. We use two bounds:
\begin{align}
 \mathbb P_Q(ax\bmod1\in S_\vartheta(s),bx\bmod1\in S_\varphi(s))
       &\le C(s^2+s/R),\label{ac:lower:ordinary}\\
 \mathbb P_Q(ax\bmod1\in S_\vartheta(s),bx\bmod1\in S_\varphi(s))
       &\le Cs^2/h\quad
          \text{if }\|u-u'\|_\infty\ge h.\label{ac:lower:transverse}
\end{align}
The second bound is for $d\ge2$, with $\vartheta=(u,t)$,
$\varphi=(u',t')$, and $0<h\le1$.
The first estimate is \eqref{ac:coarse} for the two slab targets, and the same
cell-counting argument applies when $1\le a\rho<4$.

For \eqref{ac:lower:transverse}, write $T=a\rho\ge1$ and change
variables to $X=ax$. Cover the cube $aQ$ by at most
$(T+2)^d\ll_d T^d$ integer unit cells $k+[0,1]^d$, and write
$X=k+\xi$ on each cell. Consider the full fast-grid cells
$\{\xi:R(k+\xi)-m\in[0,1]^d\}$, $m\in\Z^d$, before intersecting
them with the slow unit cell. These cells have disjoint interiors and volume $R^{-d}$ each.
Every cell meeting $S_\vartheta(s)$ lies in
$S_\vartheta(s)+[-R^{-1},R^{-1}]^d$. The outer-layer estimate
used in \eqref{ac:coarse} therefore bounds the number of such cells by
\[
 R^d\vol\bigl(S_\vartheta(s)+[-R^{-1},R^{-1}]^d\bigr)
 \ll_d R^{d-1}(1+Rs).
\]
On the intersection of such a cell with the slow unit cell, set
$y=R(k+\xi)-m$, so that $d\xi=R^{-d}dy$. In the $y$ coordinates,
the first slab condition has normal $(-u,1)$ and interval width $Rs$,
while the second has normal $(-u',1)$ and interval width $s$;
the additive constants depend on $k,m$ but do not affect the bound.
Choose $j_0$ with $|u_{j_0}-u'_{j_0}|\ge h$. The minor involving
coordinate $j_0$ and the last coordinate shows that the Gram
determinant of these normals is at least
$|u_{j_0}-u'_{j_0}|^2\ge h^2$.
By \cref{lem:two-slab-intersection}, their intersection in
$[0,1]^d$ has volume at most $C_dRs^2/h$. Multiplying by
$R^{-d}$ and summing over the contributing fast cells gives
$C_d(1+Rs)s^2/h$ per slow unit cell. If $R\le s^{-1}$, this is
$O_d(s^2/h)$. If $R>s^{-1}$, \eqref{ac:lower:ordinary} gives
$O_d(s^2)$, which is also $O_d(s^2/h)$ since $h\le1$.
Summing over the covering of $aQ$ and dividing by $T^d$ proves
\eqref{ac:lower:transverse}. For equal frequencies, the bound
$Cs^2/h$ follows directly from \cref{lem:two-slab-intersection}
on each unit cell.

For each fixed frequency $a_j$, the ratios $a_k/a_j$, $k>j$,
increase geometrically by \eqref{eq:lacunarity}. When $s<h$, there
are $O_q(1+\log(h/s))$ ratios at most $h/s$; the remaining terms are
bounded by a geometric series. Thus
\begin{equation}\label{ac:lower:summin}
 \sum_{k>j}\min\bigl\{s^2/h,\ s(a_j/a_k)\bigr\}
       \le C\frac{s^2}{h}\left(1+\log\frac hs\right).
\end{equation}
Combining
\eqref{ac:lower:ordinary}--\eqref{ac:lower:summin}, and including the
same-frequency intersections, shows that, when $0<s<h\le1$ and
$\|u-u'\|_\infty\ge h$, any block of $L$ indices satisfies
\begin{equation}\label{ac:lower:farblock}
 \mathbb P_Q(\text{both }S_\vartheta(s)\text{ and }S_\varphi(s)
                 \text{ are hit})
       \le C\left(L^2s^2+\frac{Ls^2}{h}\left(1+\log\frac hs\right)\right).
\end{equation}

For each $y_0\in\T^d$ and $\vartheta=(u,t)$,
\begin{equation}\label{ac:lower:parameter}
 \nu\{\varphi=(u',t'):\|u-u'\|_\infty<h,\
                  y_0\in S_\varphi(s)\}
       \le Cs h^{d-1}.
\end{equation}
For each $u'$, the allowable intercepts $t'$ lie in an interval of
length at most $s$; the allowable slopes have
volume $O(h^{d-1})$. In dimension one the corresponding bound is
$Cs$, with the slope condition omitted.

For fixed-body translates, Fubini gives
\begin{equation}\label{eq:translation-incidence}
 \nu\{\vartheta:y\in J_\vartheta(s)\}=s
 \qquad(y\in\T^d).
\end{equation}
Thus any $L$ points hit targets with total parameter measure at most $Ls$.

Put $s=c\log N/N$ and $L=\lfloor N^\xi\rfloor$.
For fixed-body targets, or slabs when $d=1$, fix $0<c<1$ and take
\[
 \xi=\frac{1-c}{2},
\]
declare every parameter pair near, and put $\omega_N=1$.
For slabs with $d\ge2$, fix $0<c<d$ and take
\begin{equation}\label{ac:lower:choices}
 \xi=\min\left\{\frac12,\frac{d-c}{d+1}\right\},\qquad
 h=N^{-(1-\xi)},\qquad \omega_N=h^{d-1}.
\end{equation}
Declare $(u,t)$ and $(u',t')$ near when $\|u-u'\|_\infty<h$.
These choices give $Ls=o(1)$ and, for slabs with $d\ge2$, $s/h=o(1)$.
Write $\mathcal N(\vartheta)$ for the set of near parameters, and put
$H_{b,\vartheta}(x)=\1_{\{a_nx\bmod1\in J_\vartheta(s)
\text{ for some }n\in\mathcal I_b\}}$.
By \eqref{ac:lower:parameter} and \eqref{eq:translation-incidence},
\begin{equation}\label{ac:actual-near}
 \int_{\mathcal N(\vartheta)}H_{b,\varphi}(x)\,d\nu(\varphi)
 \le CLs\omega_N.
\end{equation}
For far slab pairs, discard from the ball mixture the radii
$r<C/a_b^-$.  Their total mixture mass is
\[
 O\!\left((T_b/a_b^-)^d\right)=O(\Lambda^{-d})=O(N^{-A})
\]
for every fixed $A>0$.  Each remaining ball of radius $r$ lies in an axis-parallel cube of
side $\rho=2r$ whose volume is $O_d(1)$ times the ball volume.
Since $a_b^-\rho\ge1$, normalized ball averages are bounded by a
dimensional constant times the corresponding cube averages in
\eqref{ac:lower:farblock}. Hence
\begin{equation}\label{ac:averaged-farpair}
 \Phi_{T_b}*(H_{b,\vartheta}H_{b,\varphi})
 \le C\left(L^2s^2+\frac{Ls^2}{h}\left(1+\log\frac hs\right)\right)
       +O(N^{-A}).
\end{equation}
Set $v_N=(Ls^2/h)(1+\log(h/s))$ for slabs with $d\ge2$, and
$v_N=0$ otherwise.

\subsection{The weighted second moment}
\begin{proposition}\label{ac:lower:main}
Assume \eqref{eq:lacunarity} and \eqref{eq:fourier-decay}.
For each fixed $c$ in the indicated range,
\[
\begin{aligned}
 \mu\{V_N^{\mathcal C}<c\log N/N\}
   &=O\bigl(N^{-(1-c)/2}(\log N)^7\bigr) &&(0<c<1),\\
 \mu\{G_N^*<c\log N/N\}
   &=O\bigl(N^{-\alpha_d(c)}(\log N)^7\bigr) &&(0<c<d),
\end{aligned}
\]
where
\[
 \alpha_d(c)=\min\left\{\frac12,\frac{d-c}{d+1}\right\}.
\]
The implied constants may also depend on $c$.
\end{proposition}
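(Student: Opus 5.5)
I will run the weighted second-moment argument of the one-dimensional lower bound in Section~\ref{sec:one-dimensional}. The difference is that the Lebesgue averages are replaced by $\mu$-integrals controlled by \cref{ac:transfer}, and the torus average over $t$ is replaced by the parameter average over $(\Theta,\nu)$.

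\textbf{Weights and first moment.} Take $s=c\log N/N$, the block length $L$ and (for slabs) $h,\omega_N$ from \eqref{ac:lower:choices}, and the separated blocks of Section~\ref{sec:prelim} with scales $T_b=a_{b-1}^+\Lambda^2$. Put $p=L(s+e_N)$, $\theta=1-p$, and
\[
 U_b(x,\vartheta)=\theta U_{b-1}(x,\vartheta)(1-H_{b,\vartheta})(1+h_{b,\vartheta}),\qquad Z_b(x)=\int_\Theta U_b\,d\nu .
\]
Then $0\le U_b\le U_{b-1}\le1$, and $Z_B>0$ forces some target to avoid every active block. Expanding $(1-H)(1+h)$ and applying \eqref{ac:transferidentity} with $F=U_{b-1}$ (admissible by \cref{ac:slowweights}) gives the analogue of \eqref{od:lower-first}:
\[
 \int Z_B\,d\mu=\theta^B(1+O(Bp^2))+O(N^{-A}).
\]
Since $N-BL=O(N\log\Lambda/L+L)$ and $s(N-BL)=o(1)$, this yields $\theta^B=N^{-c}(1+o(1))$. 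The factor $\log\Lambda=\ell^6$ in the buffer length, multiplied by $s$, is where the $(\log N)^7$ in the statement should come from.

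\textbf{Second moment.} Expand the four factors as in \eqref{od:lower-second-step}. The linear differences cancel by \eqref{ac:transferidentity}, applied with $F=U_{b-1}(\cdot,\vartheta)U_{b-1}(\cdot,\varphi)$ (two-target weights are allowed in \cref{ac:slowweights}). The cross term $\int FH_{b,\vartheta}H_{b,\varphi}$ is handled by splitting parameter pairs into near and far.
\begin{itemize}
\item[] For near pairs, bound $U_{b-1}(\cdot,\varphi)\le1$ and use \eqref{ac:actual-near}. This gives a contribution of at most $CLs\,\omega_N\,(p\,\theta^{b-1}+N^{-A})$. For fixed bodies (where $\omega_N=1$) this reproduces exactly the one-dimensional computation.
\item[] For far slab pairs, transfer the product of hit indicators to $\Phi_{T_b}*(H_{b,\vartheta}H_{b,\varphi})$ via \cref{ac:transfer}, and bound this by \eqref{ac:averaged-farpair}. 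The contribution is $O(p^2+v_N)\,\mathbb E[U_{b-1}U_{b-1}]$.
\end{itemize}
Iterating, with $M_b=\int\!\!\int\!\!\int U_bU_b$ normalized by $\theta^{2b}$, gives
\[
 \frac{\operatorname{Var}Z_B}{(\mathbb EZ_B)^2}\ll Bp^2+Bv_N+ Ls\,\omega_N\,\theta^{-B}\,\nu(\mathcal N)^{-1}\cdot(\text{normalisation}).
\]
More concretely, the near-pair term is $p\,\omega_N\theta^{-B}/\nu$-mass of the near set. The near set has measure about $h^{d-1}$, so this term is about $LsN^c$ divided by the number of effectively independent slopes, roughly $h^{-(d-1)}$ times the intercept count.

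I expect the careful bookkeeping of this near term to be the main obstacle. One has to check that, with $h=N^{-(1-\xi)}$ and $\xi=\min\{1/2,(d-c)/(d+1)\}$, the quantities
\[
 p\,\theta^{-B}\omega_N\cdot h^{-(d-1)}\cdot(\ldots),\qquad Bp^2\approx NLs^2,\qquad Bv_N\approx NLs^2h^{-1}\log N
\]
are all $O(N^{-\alpha_d(c)}(\log N)^{7})$. I would first verify the exponents $\xi-1+c$ against the effective target count $N^{d(1-\xi)}$, mirroring the independent-point choice $\beta=(c+1)/(d+1)$ in the proof of \cref{thm:iid}. I would then adjust the constants in \eqref{ac:lower:choices} if a logarithm is lost.

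\textbf{Omitted indices and conclusion.} For each omitted index $n$, I would show the analogue of \eqref{od:one-omitted}:
\[
 \int\!\!\int U_B\,\1_{J_\vartheta}(a_nx)\,d\nu\,d\mu\le Cs\theta^B+O(N^{-A}).
\]
To do this, remove the neighbouring blocks and insert the indicator via \eqref{ac:periodic-insertion}, with the parameter-integrated smoothing \eqref{eq:early-hit-smoothing} covering small frequencies $a_n$. Then continue the recursion with $\theta(1-H)(1+h)$, whose expectation is at most $\theta\,\mathbb EF$ by \cref{ac:transfer}. The initial expectation $\nu\{\vartheta:y\in J_\vartheta\}\le Cs$ is supplied by \eqref{eq:translation-incidence} for translates, or \eqref{ac:lower:parameter} with $h=1$ for slabs. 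Summing over the $O(N\ell^6/L+L)$ omitted indices gives $\mathbb ES\ll s\,\ell^6(N/L)\,\theta^B$. Finally, Chebyshev's inequality for $Z_B$ and Markov's inequality for $S$ give the stated bounds. This uses $\vol(J_\vartheta(s))=s$, together with the facts that slabs are convex bodies in $[0,1]^d$ and that homothets $\mathcal C_s$ have volume $s$ and radius below $r_0$.
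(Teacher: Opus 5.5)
Your framework is the paper's: damped compensated weights, the first moment via \cref{ac:transfer}, a near/far split of the second moment using \eqref{ac:actual-near} and \eqref{ac:averaged-farpair}, and reinsertion of omitted indices with \eqref{eq:early-hit-smoothing}.

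The gap is in the variance bookkeeping. You defer it as the main obstacle, yet it is the only step that produces $\alpha_d(c)$, and the expressions you propose to check would make the check fail.

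\textbf{Near pairs.} You divide by the near-set mass, writing the term as $p\,\omega_N\theta^{-B}/\nu(\mathcal N)$, equivalently $p\,\theta^{-B}\omega_N h^{-(d-1)}$. Since $\nu(\mathcal N)\asymp h^{d-1}=\omega_N$, this cancels exactly the gain from the slope restriction. It leaves $p\,\theta^{-B}\asymp N^{c+\xi-1}\log N$, which is large as soon as $c>1-\xi$, so no $c$ near $d$ could be reached.

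\textbf{Far pairs.} You write $Bv_N\approx NLs^2h^{-1}\log N$. For $h=N^{-(1-\xi)}$ this is $\asymp(\log N)^3$, again not small. The factor $L$ is already inside $v_N$.

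The recursion gives the right terms directly, with no renormalization. The near-pair bound
$\int_{\mathcal N(\vartheta)}FH_{b,\vartheta}H_{b,\varphi}\,d\nu(\varphi)\le CLs\,\omega_N\,\mathsf U_{b-1}(\vartheta)H_{b,\vartheta}$,
followed by the one-hit transfer, adds $CL^2s^2\omega_N\theta^{b-1}$ at step $b$. Divide by $\theta^{2b}$ and use $\sum_{b\le B}\theta^{-b-1}\le C\theta^{-B}/p$. The relative near contribution is then $Ls\,\omega_N\theta^{-B}$. The far pairs contribute $B(L^2s^2+v_N)$, with $Bv_N\asymp (N/L)\cdot Ls^2h^{-1}\log N=Ns^2h^{-1}\log N$. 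With $\ell=\log N$ and $\theta^{-B}\asymp N^c$,
\[
 NLs^2\ll N^{\xi-1}\ell^2,\qquad \frac NL\,v_N\ll N^{-\xi}\ell^3,\qquad Ls\,\omega_N\theta^{-B}\ll N^{c-d+d\xi}\ell .
\]
Each term is $O(N^{-\xi}\ell^3)$ precisely because $\xi\le\frac12$ and $(d+1)\xi\le d-c$. This is the content of $\alpha_d(c)$, and it mirrors $h=N^{-\beta}$ with $\beta=(c+1)/(d+1)$ in the independent case. For fixed bodies we have $\omega_N=1$, $v_N=0$ and $\xi=(1-c)/2$, so the near term is $N^{c+\xi-1}\ell=N^{-\xi}\ell$. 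No constants in \eqref{ac:lower:choices} need adjusting.

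The dominant error is then the omitted-index ratio $m_Ns\ll N^{-\xi}\ell^7+N^{\xi-1}\ell$, which gives the stated $(\log N)^7$. You dropped the $L$ part of $m_N$ here; it is harmless since $\xi\le\frac12$.
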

\begin{proof}
Fix either target family, with $s,L,\omega_N,v_N$ chosen above
(and $h$ in the far-pair case).
Use the active blocks from Section~\ref{sec:prelim}, with common length
$L$, buffers of $g=\lceil C_{\rm gap}\log\Lambda\rceil$ omitted
indices, and $T_b=a_{b-1}^+\Lambda^2$.  Let $B$ be the number of full active blocks.
All expectations in this proof are with respect to $\mu$.
Choose the accuracy exponent $A$ so that $O(N^{-A})$ errors remain
negligible after at most $N$ iterations and division by $N^{-2c+o(1)}$.
Take $e_N\ge0$ from \eqref{ac:smoothed-marginal}, so that
$e_N=O(N^{-A})$, and put
\[
 h_{b,\vartheta}=\Phi_{T_b}*H_{b,\vartheta},\qquad
 p=L(s+e_N),\qquad \theta=1-p.
\]
Thus $0\le h_{b,\vartheta}\le p=o(1)$. As in the one-dimensional
argument, define the damped compensated weight for the present blocks
and target family:
\[
 \mathsf U_b(\vartheta)=\theta^b\prod_{j=1}^b
   (1-H_{j,\vartheta})(1+h_{j,\vartheta}),\qquad
 \mathsf Z_b=\int_\Theta \mathsf U_b(\vartheta)\,d\nu(\vartheta),
\]
with empty products equal to one.  Thus $\mathsf Z_b$ is the total weighted
mass of targets surviving the first $b$ blocks. Equivalently,
$\mathsf U_0=1$ and
\[
 \mathsf U_b=\theta\mathsf U_{b-1}(1-H_{b,\vartheta})(1+h_{b,\vartheta}).
\]
Since $\theta(1+h_{b,\vartheta})\le(1-p)(1+p)\le1$, we have
$0\le \mathsf U_b\le\mathsf U_{b-1}\le1$.  In particular,
$\mathsf Z_B>0$ implies that some target avoids all active blocks.

Repeating the calculation leading to \eqref{od:lower-first}, with
\cref{ac:transfer} in place of the one-dimensional transfer, gives,
uniformly in $b\le B$ and $\vartheta$,
\begin{equation}\label{ac:lower:firstmoment}
 \mathbb E \mathsf U_b(\vartheta)
 =\theta^b(1+O(Bp^2))+O(N^{-A/2}).
\end{equation}
The number of omitted indices is
\[
 m_N=N-BL=O(N\log\Lambda/L+L+\log\Lambda).
\]
In particular,
\[
 m_Ns=O(N^{-\xi}\ell^7+N^{\xi-1}\ell),\qquad
 Bp^2=O(N^{\xi-1}\ell^2).
\]
Since $Bp=Ns-m_Ns+O(Ne_N)$,
\[
 \theta^B=\exp\{-Ns+O(m_Ns+Bp^2+Ne_N)\}.
\]
Using $\xi\le1/2$, we obtain
\begin{equation}\label{ac:mu-mean}
 \mathbb E \mathsf Z_B=\theta^B(1+O(Bp^2))+O(N^{-A/2})
             =e^{-Ns}(1+O(N^{-\xi}\ell^7)).
\end{equation}
In particular, $\theta^{-B}=N^c(1+O(N^{-\xi}\ell^7))$.

For the second moment, set
$F=\mathsf U_{b-1}(\vartheta)\mathsf U_{b-1}(\varphi)$. The pointwise
expansion from \eqref{od:lower-second-step} and
\cref{ac:transfer} give
\begin{align*}
 \mathbb E[\mathsf U_b(\vartheta)\mathsf U_b(\varphi)]
 \le\theta^2\bigl((1+Cp^2)\mathbb EF
 +(1+Cp)\mathbb E[FH_{b,\vartheta}H_{b,\varphi}]\bigr)+O(N^{-A}).
\end{align*}
For far pairs, the two-hit transfer and \eqref{ac:averaged-farpair}
bound the last expectation by
$C(L^2s^2+v_N)\mathbb EF+O(N^{-A})$.
For near pairs, $\mathsf U_{b-1}(\varphi)\le1$ and
\eqref{ac:actual-near} give
\[
 \int_{\mathcal N(\vartheta)}F H_{b,\vartheta}H_{b,\varphi}\,d\nu(\varphi)
 \le CLs\omega_N \mathsf U_{b-1}(\vartheta)H_{b,\vartheta}.
\]
The one-hit transfer bounds its expectation by
$CL^2s^2\omega_N\mathbb E \mathsf U_{b-1}(\vartheta)+O(N^{-A})$.
Set $M_b=\mathbb E\mathsf Z_b^2$ and $\epsilon_N=L^2s^2+v_N$.
By \eqref{ac:lower:firstmoment},
\[
 M_b\le\theta^2(1+C\epsilon_N)M_{b-1}
       +CL^2s^2\omega_N\theta^{b-1}+O(N^{-A/2}).
\]
Normalize by $\theta^{2b}$ and iterate.  Since
$B\epsilon_N=o(1)$ and
\[
 \sum_{b=1}^B\theta^{-b-1}
 =\frac{\theta^{-1}(\theta^{-B}-1)}{p}
 \le C\theta^{-B}/p,
\]
this gives
\[
 \frac{M_B}{\theta^{2B}}
 \le 1+O\bigl(B\epsilon_N+Ls\omega_N\theta^{-B}\bigr)+O(N^{-A/4}).
\]
Subtracting the squared mean in its $\theta^B$ form in
\eqref{ac:mu-mean}, and using $Bp^2\ll B\epsilon_N$, yields
\begin{equation}\label{ac:mu-variance}
 \frac{\operatorname{Var}\mathsf Z_B}{(\mathbb E\mathsf Z_B)^2}
 \le C\bigl(NLs^2+(N/L)v_N+Ls\omega_N e^{Ns}\bigr)+O(N^{-A/4}).
\end{equation}
For slabs with $d\ge2$, \eqref{ac:lower:choices} gives
\[
 \begin{aligned}
 NLs^2&\ll N^{\xi-1}\ell^2,\\
 (N/L)v_N&\ll N^{-\xi}\ell^3,\\
 Ls\omega_N e^{Ns}&\ll N^{c-d+d\xi}\ell.
 \end{aligned}
\]
Since $\xi\le1/2$ and $(d+1)\xi\le d-c$, each term is
$O(N^{-\xi}\ell^3)$. When all pairs are near, the two nonzero
terms are $O(N^{\xi-1}\ell^2)$ and $O(N^{-\xi}\ell)$.
Thus the relative variance is $O(N^{-\xi}\ell^3)$ in both cases.
These bounds also verify $B\epsilon_N=o(1)$.

We now restore the omitted indices. Recall that $X_n(x)=a_nx\bmod1$.
We claim that every omitted index $n$ satisfies
\begin{equation}\label{ac:one-omitted}
 \mathbb E\int_\Theta \mathsf U_B(\vartheta)
              \1_{\{X_n\in J_\vartheta(s)\}}\,d\nu(\vartheta)
 \le Cs\theta^B+O(N^{-A/4}).
\end{equation}
As in \eqref{od:one-omitted}, removing block factors increases
$\mathsf U_B$.  If $n$ lies between blocks $b$ and $b+1$, remove both
factors; a full block of length $L$ separates $a_n$ from the retained
frequencies on either side.  Since $L\asymp N^\xi$, $q^L$ dominates
every fixed power of $\Lambda$.  Thus \eqref{ac:periodic-insertion} gives
\begin{equation}\label{ac:omitted-separated}
 \mathbb E\bigl[\mathsf U_{b-1}(\vartheta)
                   \1_{\{X_n\in J_\vartheta(s)\}}\bigr]
 \le s\,\mathbb E\mathsf U_{b-1}(\vartheta)+O(N^{-A}).
\end{equation}
For $b=1$, the preceding weight is $1$ and $a_n\ge a_1q^L$, so the
same estimate applies.  Iterating the inserted-hit version of
\cref{ac:transfer} over the retained later blocks gives
$Cs\theta^{B-2}+O(N^{-A/4})$.
After the final active block, delete its factor and obtain
$Cs\theta^{B-1}+O(N^{-A/4})$.

For an index before the first active block, delete the first block factor
and start the later recursion from the inserted hit.  Its initial expectation,
averaged over the target parameter, is at most $Cs$, because pointwise
in $x$,
\begin{equation}\label{ac:omitted-parameter}
 \nu\{\vartheta:X_n(x)\in J_\vartheta(s)\}\le Cs.
\end{equation}
This is \eqref{eq:translation-incidence} for fixed-body translates;
for slabs, fixing the slope leaves intercepts in an interval of length $s$.
With $\delta=N^{-B_0}$ and $B_0$ sufficiently large,
\eqref{eq:early-hit-smoothing} gives the necessary polynomial
approximation in $L^2(\mu\otimes\nu)$, even when $a_n$ is small.
After deleting the first block, the next retained block is separated
from $a_n$ by at least $L$ indices, so its frequency dominates the
polynomial cutoff by every fixed power of $\Lambda$. The
parameter-integrated form of \cref{ac:transfer} therefore applies,
and iteration gives $Cs\theta^{B-1}+O(N^{-A/4})$.
Since $\theta^{-2}=O(1)$, the three cases prove \eqref{ac:one-omitted}.

Let $\mathcal O_N$ be the set of omitted indices, and put
\[
 \mathsf S_B(x)=\int_\Theta \mathsf U_B(\vartheta)
       \1_{\{X_n(x)\in J_\vartheta(s)\text{ for some }n\in\mathcal O_N\}}
                  \,d\nu(\vartheta).
\]
The union bound, \eqref{ac:one-omitted}, and \eqref{ac:mu-mean} give
\begin{equation}\label{ac:mu-gaps}
 \frac{\mathbb E\mathsf S_B}{\mathbb E\mathsf Z_B}
 \le Cm_Ns+O(N^{-A/8})=O(N^{-\xi}\ell^7).
\end{equation}
By Chebyshev and Markov, except on a set of measure
$O(N^{-\xi}\ell^7)$,
\[
 \mathsf Z_B\ge\tfrac12\mathbb E\mathsf Z_B,
 \qquad \mathsf S_B\le\tfrac14\mathbb E\mathsf Z_B.
\]
Hence $\mathsf Z_B-\mathsf S_B>0$, so some target of volume $s$
avoids all $N$ points. The choices of $\xi$ give the two stated bounds;
when $d=1$, $\alpha_1(c)=(1-c)/2$.
\end{proof}

\begin{proof}[Completion of \cref{thm:fixed-shape,thm:all-convex}]
Absorbing the logarithmic factors into a smaller positive exponent,
Section~\ref{sec:all-convex-upper} and \cref{ac:lower:main} imply that
for each $\varepsilon>0$ there are constants $\alpha,C>0$ such that
\begin{equation}\label{eq:main-tails}
 \begin{aligned}
 &\mu\left\{\left|\frac{NV_N^{\mathcal C}}{\log N}-1\right|>\varepsilon\right\}+\mu\left\{\left|\frac{NG_N^*}{\log N}-d\right|>\varepsilon\right\}
 \le C N^{-\alpha}\qquad(N\ge2).
 \end{aligned}
\end{equation}
Apply the geometric-subsequence and monotonicity argument from
Section~\ref{sec:one-dimensional} to \eqref{eq:main-tails}, with
$N_j=\lfloor r^j\rfloor$ for $r>1$.  Let $r\downarrow1$ and
$\varepsilon\downarrow0$ through countable sequences.  This proves both limits.
\end{proof}

\section{A sparse counterexample}\label{sec:general-sparse-counterexample}
\begin{proof}[Proof of \cref{thm:sparse}]
We construct $(a_n)$ in finite arithmetic blocks. Fix $j\ge1$, suppose
that the preceding blocks have lengths $K_1,\ldots,K_{j-1}$, and put
$C_j=\sum_{i<j}K_i$.

By Khintchine's theorem (see Cassels~\cite[Chapter~VII]{Cassels1957}),
for Lebesgue-almost every $\theta\in[0,1)$ there are infinitely many reduced
fractions $p/q$ such that
\[
 \left|\theta-\frac pq\right|<\frac1{8jq^2\log q}.
\]
With $j$ and $C_j$ fixed, we may take $q>j(C_j+1)$ arbitrarily large.
Since $(p,q)=1$, the points $rp/q\bmod1$, $1\le r\le q$, form the
uniform $q$-grid. The first $q$ multiples of $\theta$ lie within
$q|\theta-p/q|$ of this grid and therefore have maximal gap less than
\[
 \frac1q+2q\left|\theta-\frac pq\right|<\frac{1+1/j}{q}.
\]

Set $m=\lfloor4jq\log q\rfloor$. For $1\le r\le m$,
\[
 r\left|\theta-\frac pq\right|<\frac1{2q}.
\]
All displacements $r(\theta-p/q)$ have the same sign. Hence the
first $m$ multiples lie in at most one half of each of the $q$ grid
intervals, leaving $q$ disjoint open arcs of length $1/(2q)$. Since $j,C_j$
are fixed and $m\sim4jq\log q$, we have
$\log(C_j+m)\sim\log q$. Hence, for sufficiently large $q$,
\[
 \frac1{2q}>\frac{j\log(C_j+m)}{C_j+m}.
\]

By continuity from below, we can choose an integer $K_j\ge2$ such
that, outside a set of $\theta$ of Lebesgue measure at most $2^{-j}$,
there are $q<m\le K_j$ with $q>j(C_j+1)$ satisfying both the
short-orbit maximal-gap bound and the long-orbit empty-arc bound above.
Since $b_k\to\infty$ and $b_{k+1}/b_k\to1$, choose an integer $d_j$
so large that, when $j>1$, some endpoint $b_\ell$ satisfies
\[
 K_{j-1}d_{j-1}<b_\ell<d_j,
\]
and
\[
 \frac{b_{k+1}}{b_k}<1+\frac1{K_j}
 \qquad\text{whenever }b_{k+1}\ge d_j.
\]
Put $\mathcal A_j=\{d_j,2d_j,\ldots,K_jd_j\}$ and let $(a_n)$ be the
increasing enumeration of $\bigcup_{j\ge1}\mathcal A_j$.
For $1\le r<s\le K_j$,
\[
 \frac{sd_j}{rd_j}\ge\frac{K_j}{K_j-1}>1+\frac1{K_j}.
\]
If both elements belonged to $(b_k,b_{k+1}]$, then
$b_{k+1}/b_k>s/r>1+1/K_j$, contrary to the choice of $d_j$.
For $j>1$, all earlier terms lie below $b_\ell$, while all terms
of $\mathcal A_j$ lie above it. Thus no interval meets two different
blocks either, proving the required sparsity.

Multiplication by $d_j$ modulo $1$ preserves Lebesgue measure. Since
$\sum_j2^{-j}<\infty$, Borel--Cantelli implies that for
Lebesgue-almost every $x$ and all sufficiently large $j$, there are
$q_j<m_j\le K_j$ with the above properties for $\theta=d_jx\bmod1$.
In particular, $q_j>j(C_j+1)$.

At $N_j=C_j+q_j$, the earlier $C_j$ points can only decrease the
maximal gap. Consequently,
\[
 1\le N_jG_{N_j}(x)
 <\left(1+\frac{C_j}{q_j}\right)\left(1+\frac1j\right)
 <\left(1+\frac1j\right)^2.
\]
At $M_j=C_j+m_j$, the current block leaves $q_j>C_j$ disjoint open
arcs, each longer than $j\log M_j/M_j$. The earlier $C_j$ points
can meet at most $C_j$ of these arcs, so one remains empty. Hence
\[
 \frac{M_jG_{M_j}(x)}{\log M_j}>j.
\]
As $j\to\infty$, these two inequalities prove \cref{thm:sparse}.
\end{proof}

\smallskip
\noindent\textbf{Acknowledgements.} We thank Yitwah Cheung for asking about higher dimensions. Y. Peres was supported by the National Natural Science Foundation of China, grant RFIS-W2531011.

\smallskip
\noindent\textbf{Declaration of AI assistance.} During the preparation of this manuscript, the authors used AI for language editing, literature searches, and assistance in exploring and checking mathematical arguments. The authors provided the mathematical ideas and directed the development of the arguments; all results and proofs were independently verified by the authors, who take full responsibility for the content of the article.

\medskip
\noindent E-mail addresses:
\href{mailto:yuval@yuvalperes.com}{\texttt{yuval@yuvalperes.com}} (Y. Peres),
\href{mailto:bhyang@simis.cn}{\texttt{bhyang@simis.cn}} (B. Yang).
\end{document}